\documentclass[11pt]{amsart}

\usepackage{amsmath,amssymb,latexsym,soul,cite,mathrsfs}

\usepackage{color,enumitem,graphicx}
\usepackage[colorlinks=true,urlcolor=blue,
citecolor=red,linkcolor=blue,linktocpage,pdfpagelabels,
bookmarksnumbered,bookmarksopen]{hyperref}
\usepackage[english]{babel}

\usepackage[left=2.9cm,right=2.9cm,top=2.8cm,bottom=2.8cm]{geometry}
\usepackage[hyperpageref]{backref}

\usepackage[colorinlistoftodos]{todonotes}
\makeatletter
\providecommand\@dotsep{5}
\def\listtodoname{List of Todos}
\def\listoftodos{\@starttoc{tdo}\listtodoname}
\makeatother

\numberwithin{equation}{section}





\newtheorem{theorem}{Theorem}[section]
\newtheorem{proposition}[theorem]{Proposition}
\newtheorem{definition}[theorem]{Definition}
\newtheorem{lemma}[theorem]{Lemma}

\newtheorem{claim}[theorem]{Claim}

\newcommand\R{\mathbb R}
\newcommand\N{\mathbb N}

\newcommand\K{\mathbb K}

\begin{document}

\title[Existence of solution for a class of variational ...]
{Existence of solution for a class of variational inequality in whole $\R^N$ with critical growth: \linebreak The local Mountain pass case}

\author{Claudianor O. Alves}
\author{Luciano M. Barros}
\author{C\'esar E. Torres Ledesma}

\address[Claudianor O. Alves]{\newline\indent Unidade Acad\^emica de Matem\'atica
\newline\indent 
Universidade Federal de Campina Grande,
\newline\indent
58429-970, Campina Grande - PB - Brazil}
\email{\href{mailto:coalves@dme.ufcg.edu.br}{coalves@dme.ufcg.edu.br}}

\address[Luciano M. Barros]{\newline\indent Unidade Acad\^emica de F\'isica e Matem\'atica
\newline\indent 
Universidade Federal de Campina Grande,
\newline\indent
58175-000, Cuit\'e - PB - Brazil}
\email{\href{lucianomb@mat.ufcg.edu.br}{lucianomb@mat.ufcg.edu.br}}

\address[C\'esar E. Torres Ledesma]
{\newline\indent Departamento de Matem\'aticas
\newline\indent 
Universidad Nacional de Trujillo
\newline\indent
Av. Juan Pablo II s/n. Trujillo-Per\'u}
\email{\href{ctl\_576@yahoo.es}{ctl\_576@yahoo.es}}

\pretolerance10000


\begin{abstract}
In this paper we study the existence of solution for a class of variational inequality in whole $\mathbb{R}^N$ where the nonlinearity has a critical growth for $N \geq 2$. By combining  a penalization scheme found in del Pino and Felmer \cite{Delpino} with a  penalization method due to Bensoussan and Lions \cite{Bensoussan}, we improve a recent result by  Alves, Barros and Torres \cite{CALBCT}.  

\end{abstract}

\subjclass[2010]{Primary 35J86 ; Secondary 35B33, 35B40.} 
\keywords{Variational inequalities, Critical exponent, Asymptotic behavior }

\maketitle

\section{Introduction}

In this paper we deal with the existence of nontrivial weak solutions for the following variational inequality   
\begin{equation}\label{01}
\begin{cases}
u\in \mathbb{K}&\\
\displaystyle\int_{\mathbb{R}^N}\nabla u \nabla(v-u)dx+\int_{\mathbb{R}^N}(1+\lambda V(x))u(v-u)dx 
\geq \int_{\mathbb{R}^N}f(u)(v-u)dx,&\forall v \in \mathbb{K} 
\end{cases}
\end{equation}
where $\lambda > 0$ is a positive parameter, 
$$
\mathbb{K} = \lbrace v \in E; \ v \geq \varphi \ \text{a.e. in}\ \Omega \rbrace,
$$ 
with $\varphi \in H^1(\R^N)$, $\varphi^+ \not=0$, $supp \, \varphi^+ \subset \Omega$,
$$
E=\Big\{u \in H^{1}(\mathbb{R}^{N}); \ \int_{\mathbb{R}^{N}} V(x)|u|^{2}dx<+ \infty\Big\},
$$  
and $V\in C(\mathbb{R}^{N}, \mathbb{R})$ satisfies the following conditions:  
\begin{itemize}
	\item[$(V_{1})$] $V(x) \geq 0, \,\, \forall x \in \mathbb{R}^N$;
	
	\item[$(V_{2})$] $\Omega := int (V^{-1}(\{0\})) \neq \emptyset$ is an open, connected and bounded set 
	of $\mathbb{R}^{N}$ with smooth boundary;
\end{itemize}
With respect to the function $f:\mathbb{R} \rightarrow \mathbb{R}$, let us assume the following conditions:
If $N\geq 3$, $f$ is of the form 
$$
f(t)=\mu |t|^{q-2}t+|t|^{2^*-2}t, \quad \forall t \in \mathbb{R}, \leqno{(f_0)}
$$
where $2^*=\frac{2N}{N-2}$ and $2<q<2^*$.

\noindent 
If $N=2$,  let us  assume that $f\in C^1(\R,\R)$ and that it has an exponential growth, that is, there is $\alpha_0>0$ such that 
$$ \displaystyle\lim_{|t|\rightarrow\infty}\displaystyle\frac{|f(t)|}{e^{\alpha t^2}}=
\left\{
\begin{array}{cc}
0,& \alpha>\alpha_0,\\
+ \infty,& \alpha<\alpha_0.\\
\end{array}
\right.
( \mbox{see Figueiredo, Miyagaki and Ruf\cite{Figueiredo}})$$
Furthermore, we also suppose  the conditions below:
\begin{itemize}
	\item[$(f_{1})$] $\displaystyle\frac{f(t)}{t} \rightarrow 0$  as  $|t| \rightarrow 0 $;
	
	\item[$(f_{2})$] There is $\theta > 2$  such that 
	$$0< \theta F(t):=\theta\displaystyle\int_{0}^{t}f(\sigma)d\sigma
	\leq f(t)t, \ t\neq0;$$
	\item[$(f_{3})$] There is $C>0$ such that 
	$$|f'(t)|\leq C e^{\alpha_0 t^2}, \ t\in\mathbb{R};$$
	\item [$(f_4)$] There are $p>1$ and $\nu>0$ such that 
	$$f(t)\geq \nu t^p, \ \forall t\geq0.$$
	
\end{itemize}

\vspace{0.5 cm}
Variational inequality appears in a lot of areas of mathematical and engineering sciences
including elasticity, transportation and economics equilibrium, nonlinear programming,
operations research, see \cite{Bensoussan}, \cite{Crank}, \cite{DL},  \cite{Fichera}, \cite{LS} and \cite{Rodrigues}, which has motivated its study in the last years. The main tools used to study variational inequalities are variational methods, fixed point theory and sub-super solution. In the books due to  Carl, Le and Motreanu \cite{Motreanu} and Motreanu and R\u{a}dulescu \cite{MR} the reader can find an overview involving these methods 

Related to the variational methods, we essentially have  three methods that were used in the study of variational inequalities, which are mentioned below: \\

\noindent {\bf Method 1.} Some authors have used the framework of the nonsmooth critical point theory for lower semicontinuous functionals found in Corvellec,  Degiovanni and Marzocchi \cite{CDM}, Degiovanni and Marzocchi \cite{DM}, Campa and Degiovanni \cite{CD} and Degiovanni and Zani \cite{DZ}. In this direction, we would like to cite the paper due  Magrone, Mugnai and Servadei \cite{MMS}. \\ 

\noindent {\bf Method 2.} Other authors have opted by using the minimax principles for lower semicontinuous functions developed by Szulkin \cite{Szulkin}, see for example,  Alves and  C\^orrea \cite{A&C}, Figueiredo, Furtado and Montenegro \cite{FFM},  Jianfu \cite{Jianfu,Jianfu2}, Mancini and Musina \cite{MM}. \\  

\noindent {\bf Method 3.}  The last method that we would like to cite is the penalization method due to Bensoussan and Lions \cite{Bensoussan}, which was improved by  Carl,  Le and Motreanu \cite{Motreanu}. This method consists in considering a penalized problem where the classical variational methods can be used to solve it. After some estimates it is possible to get a solution for the original problem. This method was used for example in Alves, Barros and Torres \cite{CALBCT}, Magrone and Servadei \cite{Servadei}, Matzeu and Servadei \cite{Servadei3,Servadei2,Servadei4}. \\

Finally, still related to variational inequalities, we would like to cite Baiocchi and Capelo \cite{BC}, Chipot and Yeressian \cite{Chipot}, Fang \cite{Fang}, Friedman \cite{Friedman}, Kinderlehrer and Stampacchia \cite{Stampacchia}, Monneau \cite{Regis}, Panagiotopoulos \cite{Pana},  Teka \cite{Teka}, Troianiello \cite{Troianiello} and their references. \\

Now we are in position to state our main result.

\begin{theorem}\label{main1}
Suppose that $V$ satisfies $(V_1)-(V_2)$ and $f$ satisfies $(f_1)-(f_4)$ if $N=2$. Then there exist $\lambda_*>0$, $\mu_*>0$, $\nu_*>0$ and $\rho = \rho_\mu>0$ or $\rho = \rho_\nu>0$ with $\rho_\mu \to 0$ and $\rho_\nu \to 0$ as $\mu \to +\infty$ and $\nu \to +\infty$ respectively such that, if $||\varphi||<\rho$, problem (\ref{01}) has at least one nontrivial weak solution $u_\lambda$ for $\lambda\geq\lambda_*$, $\mu\geq\mu_*$ and $\nu\geq\nu_*$. Furthermore, for any sequence $\lambda_n\to+\infty$, there exists a subsequence still denote by $(\lambda_{n})$ such that $(u_{\lambda_n})$ converge strongly in $H^1(\R^N)$ to a function $u$ with $u\equiv 0$ in $\mathbb{R}^N \setminus \Omega$, which $u$ is a solution of the following variational inequality	
	$$
	\displaystyle\int_{\Omega}\nabla u \nabla (v-u)\hspace{0.05cm}dx+\displaystyle\int_{\Omega}u(v-u)\hspace{0.05cm}dx\geq
	\int_{\Omega}f(u)u(v-u)\hspace{0.05cm}dx,
	$$
	for every $v \in \tilde{\mathbb{K}}$, where
	$$
	\tilde{\mathbb{K}} = \lbrace v \in H^1_0(\Omega); \ v \geq \varphi \ \text{a.e. in }\ \Omega \rbrace.
	$$
\end{theorem}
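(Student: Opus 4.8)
The plan is to combine the two penalizations named in the abstract in order to reduce \eqref{01} to a semilinear equation to which the Mountain Pass Theorem applies. First I would apply the del Pino--Felmer scheme to the nonlinearity, replacing $f$ by a modified $g(x,t)$ that coincides with $f$ for $x\in\Omega$ and, for $x\notin\Omega$, is cut off so that $g(x,t)\le\tfrac1k t$ for a suitably large $k$; on $\Omega$ the Ambrosetti--Rabinowitz inequality $(f_2)$ is preserved, while outside $\Omega$ the modified term is dominated by the linear term coming from $1+\lambda V$, which is what restores compactness in the whole space. Next, to handle the obstacle $u\ge\varphi$ on $\Omega$, I would introduce the Bensoussan--Lions penalty and, for $\varepsilon>0$, study the functional
$$
I_{\lambda,\varepsilon}(u)=\frac12\int_{\R^N}\bigl(|\nabla u|^2+(1+\lambda V)u^2\bigr)\,dx-\int_{\R^N}G(x,u)\,dx+\frac{1}{2\varepsilon}\int_{\Omega}\bigl|(u-\varphi)^-\bigr|^2\,dx,
$$
where $G(x,t)=\int_0^t g(x,s)\,ds$. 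The penalty is convex and vanishes exactly on $\K$, and a critical point of $I_{\lambda,\varepsilon}$ solves the penalized equation $-\Delta u+(1+\lambda V)u = g(x,u)+\tfrac1\varepsilon\chi_\Omega(u-\varphi)^-$; letting $\varepsilon\to0$ should then recover a solution of the variational inequality driven by $g$.

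The second block is the variational analysis of $I_{\lambda,\varepsilon}$. Condition $(f_1)$ gives the behaviour near the origin that traps the functional between a small sphere of positive energy and the origin, and $(f_2)$ provides boundedness of Palais--Smale sequences and the crossing needed for the Mountain Pass geometry, producing a minimax level $c_{\lambda,\varepsilon}$. The delicate point is the loss of compactness forced by the critical growth: for $N\ge3$ the Palais--Smale condition can be verified only below the threshold $\tfrac1N S^{N/2}$, and for $N=2$ only below the Trudinger--Moser threshold associated with $\alpha_0$ in $(f_3)$. I would therefore establish a \emph{local} Palais--Smale condition and, above all, bound the minimax level strictly below the relevant threshold.

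This level estimate is the main obstacle. For $N\ge3$ I would insert truncated Aubin--Talenti instantons into the mountain pass path and use the subcritical term $\mu|t|^{q-2}t$: taking $\mu\ge\mu_*$ large pushes $c_{\lambda,\varepsilon}$ below $\tfrac1N S^{N/2}$, while the smallness $\|\varphi\|<\rho_\mu$ keeps the obstacle penalty and the boundary contributions under control. For $N=2$ the analogous lever is $(f_4)$: the lower bound $f(t)\ge\nu t^p$ with $\nu\ge\nu_*$ large, tested against Moser-type functions, drives the level below the Trudinger--Moser threshold, again under $\|\varphi\|<\rho_\nu$. The thresholds $\rho_\mu,\rho_\nu\to0$ as $\mu,\nu\to+\infty$ precisely because a larger $\mu$ or $\nu$ forces a smaller admissible obstacle. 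With the level controlled, the local Palais--Smale condition yields a nontrivial critical point $u_{\lambda,\varepsilon}$.

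Finally I would pass to the two limits. Sending $\varepsilon\to0$ with uniform energy bounds produces $u_\lambda\in\K$ solving the variational inequality for the penalized nonlinearity $g$. A Moser--Stampacchia iteration then shows that, for $\lambda\ge\lambda_*$, $u_\lambda$ is so small outside $\Omega$ that the modification is never active, so $g(x,u_\lambda)=f(u_\lambda)$ and $u_\lambda$ solves \eqref{01}. For the asymptotics, I would derive bounds on $\|u_\lambda\|$ and on $\lambda\int_{\R^N}V u_\lambda^2$ that are uniform in $\lambda$; along any sequence $\lambda_n\to+\infty$ the boundedness of $\lambda\int V u_\lambda^2$ forces the weak limit $u$ to vanish on $\{V>0\}$, whence $u\equiv0$ in $\R^N\setminus\Omega$ and $u\in H^1_0(\Omega)$. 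Upgrading weak to strong convergence in $H^1(\R^N)$ via the Brezis--Lieb lemma together with the subcritical level estimate (to exclude concentration), and passing to the limit in the inequality, yields the limiting variational inequality on $\tilde{\K}$ stated in the theorem.
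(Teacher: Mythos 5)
Your overall architecture coincides with the paper's: the same double penalization (del Pino--Felmer truncation of $f$ outside a neighborhood $\widetilde{\Omega}\supset\overline{\Omega}$, plus the Bensoussan--Lions penalty $\frac{1}{2\epsilon}\int_\Omega[(\varphi-u)^+]^2dx$), mountain pass geometry, a local $(PS)$ condition below a critical threshold, the limit $\epsilon=1/n\to 0$ to recover a solution of the modified inequality, Moser iteration to deactivate the truncation, and the asymptotics $\lambda_n\to\infty$. The one step where you genuinely diverge is the level estimate, and your device has a concrete problem. The paper never touches Aubin--Talenti instantons or Moser functions: it bounds $c_{\lambda,\epsilon}$ along the segment $t\mapsto t\varphi^+$, $t\in[1,1+t_*]$, which lies entirely in $\mathbb{K}$ (so the penalty term vanishes identically along the path) and is supported in $\Omega$ where $V=0$ (so $\|t\varphi^+\|_\lambda=t\|\varphi^+\|$ is independent of $\lambda$). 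This yields $c_{\lambda,\epsilon}\leq\max_{t>0}\big[\frac{t^2}{2}\|\varphi^+\|^2-\frac{\mu t^q}{q}|\varphi^+|_q^q\big]\leq C\mu^{-2/(q-2)}$, a bound \emph{uniform in $\lambda$ and $\epsilon$} tending to $0$ as $\mu\to\infty$ (Lemma \ref{bound1}; and $0<c_{\lambda,\epsilon}<A\nu^{-2/(p-1)}$ via $(f_4)$ when $N=2$, Lemma \ref{lema3.9}). Uniformity in both parameters is essential, because you first send $\epsilon\to0$ at fixed $\lambda$ and then $\lambda\to\infty$. A path through a truncated instanton breaks both uniformities: wherever the path dips below $\varphi$ on $supp\,\varphi^+$ (unavoidable near the low end of an instanton path), the penalty contributes a term of order $1/\epsilon$ that no smallness of $\|\varphi\|$ can cure, since at a path point $u\approx 0$ on $supp\,\varphi^+$ the cost is $\frac{1}{2\epsilon}\int_\Omega(\varphi^+)^2dx\to\infty$ for every fixed $\varphi^+\neq0$; and the instanton carries mass on $\{V>0\}$, so its $\|\cdot\|_\lambda$-energy grows with $\lambda$. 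The same objection applies to Moser functions when $N=2$. Note also that your threshold $\frac1N S^{N/2}$ is the wrong constant here: the $(PS)$ verification goes through $(\frac12-\frac1q)\xi(x_j)\leq c_{\lambda,\epsilon}$ and $\xi(x_j)\geq S\nu_j^{2/2^*}$, so the level must lie below $\frac{q-2}{2q}S^{N/2}$, which is strictly smaller than $\frac1N S^{N/2}$ since $q<2^*$ --- another reason the instanton estimate, which typically only reaches $\frac1N S^{N/2}$, is inadequate, while the large-$\mu$ path bound reaches any positive level.

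Two further structural points. First, you present the Moser iteration as directly giving, for $\lambda\geq\lambda_*$, smallness of $u_\lambda$ outside $\Omega$, with the $\lambda\to\infty$ asymptotics as a separate afterthought; in fact the iteration only yields $|u_n|_{L^\infty(\widetilde{\Omega}^c)}\leq C|u_n|_{L^{2^*}(\Omega^c)}$ with $C$ independent of $n$, and the right-hand side is made small precisely by the convergence $u_{\lambda_n}\to0$ in $H^1(\Omega^c)$ along $\lambda_n\to\infty$ established in Proposition \ref{propo}. So the asymptotic analysis is not optional: it is what produces $\lambda^*$ and closes the existence statement. Second, because you are working with a variational inequality rather than an equation, neither the concentration analysis nor the iteration can use the test functions you implicitly assume (e.g.\ $u_n-u$, or cut-offs of $u_n$, as in a Brezis--Lieb argument): one must manufacture \emph{admissible} comparison functions and verify membership in $\mathbb{K}$ each time, as the paper does with $v_n=u_n-\psi_\beta(u_n-\varphi^+)$ (Claim \ref{claim4}) to kill the concentration masses $\nu_j$, and with $v_{n,L}=u_n-u_n\eta^2(u_{n,L}/L)^{2(\beta-1)}$ (Claim \ref{claim5}) in the iteration. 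These gaps are repairable --- replacing your level estimate by the paper's obstacle-path bound fixes the main one --- and with that change your sketch matches the paper step for step.
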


In order to prove Theorem \ref{main1}, we have simultaneously used two penalty methods: The first penalization is due to Bensoussan and Lions \cite{Bensoussan} and improved by Carl,  Le and Motreanu \cite{Motreanu}, and the second one is due to del Pino and Felmer \cite{Delpino}. We would like point out that Theorem \ref{main1} improves the main result found in  \cite{CALBCT}, because in that paper the authors have assumed that there is $M_0>0$ such that, the set $\mathcal{L}=\{x\in\mathbb{R}^N; \ V(x) \leq M_0\}$ is nonempty with finite measure, which permits to get a compactness control when $\lambda$ is  large enough.  Here, we overcome the loss of this condition, by using the penalization method found in del Pino and Felmer \cite{Delpino}. Moreover, the Theorem \ref{main1} also complements the study made in Alves and Corr\^ea \cite{A&C}, because in that paper the authors considered $N=1$, the conditions $(V_1)-(V_2)$, and  used nonsmooth critical point theory to prove the existence of a nontrivial solution when $\lambda$ is large enough, while in the present paper, $N \geq 2$, $f$ has a critical growth and we are working with  the penalization method due to Bensoussan and Lions \cite{Bensoussan}.

The plan of the paper is as follows: In Section 2 we show some preliminary results. In Section 3 it is proved the Theorem \ref{main1} for the case $N \geq 3$, while in Section 4 we show  the case $N=2$.

Before concluding this introduction, we would like point out that, without loss of generality, we are supposing that $f(t)=0$, for $t\leq0$.


\section{Preliminary results}

In whole this paper, $E_{\lambda}$ denotes the function space defined as
$$
E_\lambda=\Big\{u \in H^{1}(\mathbb{R}^{N}); \ \int_{\mathbb{R}^{N}} V(x)|u|^{2}dx<+ \infty\Big\}
$$
endowed with the inner product  
$$
\langle u,v\rangle_\lambda=\displaystyle\int_{\mathbb{R}^{N}}[\nabla u \nabla v + (1+\lambda V(x))uv]dx,
$$
whose the associated norm is given by  
$$
\|u\|_\lambda=\Bigg(\displaystyle\int_{\mathbb{R}^{N}}[|\nabla u|^2 + (1+\lambda V(x))|u|^2]dx\Bigg)^{\frac{1}{2}}.
$$
Then, for any $\lambda\geq 0$, $(E_\lambda, \langle \;,\;\rangle_\lambda)$ is a Hilbert space and by $(V_1)-(V_2)$,   
$$
\|u\| \leq \|u\|_{\lambda}, \quad \forall u \in E_\lambda. 
$$
From this, the embedding $E_\lambda \hookrightarrow H^{1}(\mathbb{R}^N)$ is continuous for every $\lambda >0$. As a consequence, the embedding   
\begin{equation}\label{embe}
E_\lambda \hookrightarrow L^{s}(\mathbb{R}^N), \quad \forall s \in [2,2^*],
\end{equation}
is also continuous for all $\lambda>0$.

In order to solve problem (\ref{01}), we employ the penalization method due to Carl, Le and Motreanu \cite{Motreanu}. To begin with, let us recall the definition of penalty operator.
\begin{definition}\label{penalty}
	Let $C\neq \emptyset$ a closed and convex subset of a reflexive Banach space $X$. The operator $P:X\rightarrow X'$ is called a penalty operator associated with  $C\subset X$ if: 
	\begin{itemize}
		\item [$(P_1)$] The function 
		$$
		\begin{aligned}
		{}[0,1] & \mapsto \R\\
		t&\mapsto\langle P(u+tv),w\rangle
		\end{aligned}
		$$
		is continuous for every $u$, $v$, $w\in X$;
		
		\item [$(P_2)$] For every $u$, $v\in X$;
		$$
		\langle P(u)-P(v), u-v\rangle\geq0, 
		$$
		\item [$(P_3)$]
		$P(u)=0$ if and only if $u\in C$;
		\item [$(P_4)$] Let $A\subset X$ be a bounded set, then $P(A)$ is bounded.
	\end{itemize}
	\end{definition}
	
Associated to problem (\ref{01}), we have the penalized problem defined as  	
\begin{equation}\label{P01}
-\Delta u+(1+\lambda V(x))u-\frac{1}{\epsilon}(\varphi-u)^+\chi_\Omega=f(u), \quad \text{in} \ \ \R^N.
\end{equation}
We say that $u$ is a weak solution of (\ref{P01}) if:
\begin{equation}\label{P02}
\displaystyle\int_{\mathbb{R}^N}[\nabla u \nabla v+(1+\lambda V(x))uv]\hspace{0.05cm}dx -\displaystyle\frac{1}{\epsilon} \int_\Omega(\varphi-u)^{+}v\hspace{0.05cm}dx=
\int_{\mathbb{R}^N}f(u)v\hspace{0.05cm}dx, \\\ \forall v \in E_\lambda,
\end{equation}	
where $\epsilon>0$  is the penalization parameter and $\displaystyle\frac{1}{\epsilon}\int_\Omega(\varphi-u)^{+}vdx$ is the penalization term.

Hereafter, we will consider the operator $P:E_\lambda\rightarrow E'_\lambda$ defined as
$$
\langle P(u),v\rangle=-\displaystyle\int_\Omega(\varphi-u)^{+}v\hspace{0.05cm}dx, 
$$
which is a penalty operator associated with the convex set $\mathbb{K}$.

Next, for the reader's convenience, we state a result due to Lions \cite{Lions2} that will apply an important rule in our approach when $N \geq 3$.
\begin{lemma}\label{C-C}
Let $(u_n) \subset H^1(\R^N)$ be a bounded sequence such that 
$$
\begin{aligned}
&u_n \rightharpoonup u\;\;\mbox{in}\;\;L^{2^*}(\R^N);\\
&|u_n|^{2^*} \rightharpoonup \nu\;\;\mbox{in}\;\;\mathcal{M}(\R^N)\;\;\mbox{and}\\
&|\nabla u_n|^2 \rightharpoonup \xi \;\;\mbox{in}\;\;\mathcal{M}(\R^N),
\end{aligned}
$$
where $\nu$ and $\xi$ are non negative and finite measure in $\R^N$. Then, there are sequences $(x_n) \subset \R^N$ and $(\nu_n ) \subset [0,\infty)$ such that 
$$
|u_n|^{2^*} \rightharpoonup |u|^{2^*} + \sum_{i=1}^{\infty} \nu_i \delta_{x_i} = \nu,
$$
with 
\begin{equation}\label{C-C1}
\sum_{i=1}^{\infty} \nu_{n}^{\frac{2}{2^*}} < \infty
\end{equation}
and 
\begin{equation}\label{C-C2}
\xi (x_n) \geq S \nu_n^{2/2^*}\;\;\forall n\in \N,
\end{equation}
where $\delta_i$ is the Dirac measure and $S$ is the best Sobolev constant.
\end{lemma}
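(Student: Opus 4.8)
The plan is to follow P.-L. Lions' original concentration--compactness argument, whose core is a reverse-H\"older inequality between the limit measures obtained by inserting localized cut-off functions into the Sobolev inequality. First I would reduce to the case $u\equiv 0$. Writing $v_n=u_n-u$, the Brezis--Lieb lemma gives $|u_n|^{2^*}-|v_n|^{2^*}\to |u|^{2^*}$ in $L^1(\R^N)$, so the limit measure splits as $\nu=|u|^{2^*}+\tilde\nu$, where $|v_n|^{2^*}\rightharpoonup\tilde\nu$ in $\mathcal{M}(\R^N)$ and $v_n\rightharpoonup 0$ in $H^1(\R^N)$. Moreover, testing $|\nabla u_n|^2=|\nabla v_n|^2+2\nabla u\cdot\nabla v_n+|\nabla u|^2$ against continuous compactly supported functions and using $\nabla v_n\rightharpoonup 0$ in $L^2$ shows that $|\nabla v_n|^2\rightharpoonup\tilde\xi$ with $\xi=\tilde\xi+|\nabla u|^2\,dx$, so in particular $\tilde\xi\le\xi$ and the two measures share the same atoms with equal mass. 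Hence it suffices to locate the atoms of the weakly null sequence $(v_n)$.

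Next comes the main computation. For arbitrary $\phi\in C_c^\infty(\R^N)$ I apply the Sobolev inequality $S\|w\|_{L^{2^*}}^2\le \|\nabla w\|_{L^2}^2$ to $w=\phi v_n$, which yields
\[
S\left(\int_{\R^N}|\phi|^{2^*}|v_n|^{2^*}\,dx\right)^{2/2^*}\le \int_{\R^N}|\nabla(\phi v_n)|^2\,dx.
\]
Since $v_n\rightharpoonup 0$ in $H^1(\R^N)$, Rellich--Kondrachov gives $v_n\to 0$ in $L^2_{loc}(\R^N)$, so the cross term $2\int \phi\, v_n\,\nabla\phi\cdot\nabla v_n$ and the term $\int|\nabla\phi|^2|v_n|^2$ both vanish in the limit. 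Passing to the limit therefore produces the reverse-H\"older relation
\[
\left(\int_{\R^N}|\phi|^{2^*}\,d\tilde\nu\right)^{1/2^*}\le S^{-1/2}\left(\int_{\R^N}|\phi|^2\,d\tilde\xi\right)^{1/2},\qquad \forall\,\phi\in C_c^\infty(\R^N),
\]
and by density this extends to bounded Borel $\phi$, giving $\tilde\nu(A)^{2/2^*}\le S^{-1}\tilde\xi(A)$, equivalently $\tilde\nu(A)\le S^{-2^*/2}\tilde\xi(A)^{2^*/2}$, for every Borel set $A$.

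The heart of the proof --- and the step I expect to be the main obstacle --- is to deduce from this inequality that $\tilde\nu$ is purely atomic. The bound forces $\tilde\nu\ll\tilde\xi$; writing $g=d\tilde\nu/d\tilde\xi$ and invoking the Lebesgue--Besicovitch differentiation theorem, at any non-atomic point $x$ of $\tilde\xi$ one has $g(x)=\lim_{r\to0}\tilde\nu(B_r(x))/\tilde\xi(B_r(x))\le S^{-2^*/2}\lim_{r\to0}\tilde\xi(B_r(x))^{2^*/2-1}=0$, because $2^*/2>1$. Thus $\tilde\nu$ charges only the at most countably many atoms of $\tilde\xi$, so $\tilde\nu=\sum_i\nu_i\delta_{x_i}$. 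Testing the reverse-H\"older inequality with $\phi$ concentrated in shrinking balls around $x_i$ gives $\nu_i^{1/2^*}\le S^{-1/2}\tilde\xi(\{x_i\})^{1/2}$; since $|\nabla u|^2\,dx$ is non-atomic we have $\tilde\xi(\{x_i\})=\xi(\{x_i\})$, yielding $\xi(\{x_i\})\ge S\,\nu_i^{2/2^*}$. Finally, the atoms being distinct, $\sum_i\xi(\{x_i\})\le\xi(\R^N)<\infty$, whence $\sum_i\nu_i^{2/2^*}\le S^{-1}\xi(\R^N)<\infty$. Undoing the reduction through $\nu=|u|^{2^*}+\tilde\nu$ delivers the claimed decomposition and the two estimates, completing the proof.
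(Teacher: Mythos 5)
Your proposal is correct: it is a faithful reconstruction of P.-L. Lions' original concentration--compactness argument (reduction to the weakly null case via Brezis--Lieb, the reverse H\"older inequality from the Sobolev inequality applied to $\phi v_n$, and the Besicovitch differentiation step forcing atomicity), which is precisely the proof behind the cited result. The paper itself offers no proof of this lemma --- it states it as known and refers to Lions \cite{Lions2} --- so there is nothing for your argument to diverge from; the only cosmetic point is that you should pass to a subsequence so that $|\nabla v_n|^2$ and $|u_n-u|^{2^*}$ actually converge weakly-$*$ in $\mathcal{M}(\R^N)$ (and so that $u_n\to u$ a.e., which Brezis--Lieb requires), which is harmless since the conclusion is subsequence-stable.
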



\section{Proof of the Theorem \ref{main1}: Case $N\geq3$}

In this section, we deal with the following variational inequality
\begin{equation}\label{eq3.1}
\begin{cases}
u\in\mathbb{K}&\\
\displaystyle\int_{\mathbb{R}^N}\nabla u \nabla(v-u)\hspace{0.05cm}dx+\displaystyle\int_{\mathbb{R}^N}(1+\lambda V(x))u(v-u)\hspace{0.05cm}dx 
\geq \int_{\mathbb{R}^N}(\mu u^{q-2}+u^{2^*-2})u(v-u)\hspace{0.05cm}dx, \ \forall v \in \mathbb{K}.
\end{cases}
\end{equation}

In order to prove the existence of nontrivial weak solutions for (\ref{eq3.1}),  we adapt some ideas found in \cite{A&C} and \cite{Delpino}. Have this in mind, let us consider the functions $h, H: \R \to \R$ defined by
$$
h(t) = \begin{cases}
f(t),& t\leq a\\
\frac{t}{k},&t\geq a
\end{cases}
\quad \mbox{and}\quad 
H(t) = \int_{0}^{t} h(s)ds,
$$
where $k = \frac{2q}{q-2}>2$, and $a>0$ satisfies $f(a) = \frac{a}{k}$. Now, let $\widetilde{\Omega}$  be a bounded, connected set such that $\overline{\Omega} \subset \widetilde{\Omega}$ and let $g, G: \R^N \times \R \to \R$ defined as
$$
g(x,t) = \chi_{\widetilde{\Omega}}(x)f(t) + (1-\chi_{\widetilde{\Omega}}(x))h(t)
$$ 
and 
$$
G(x,t) = \chi_{\widetilde{\Omega}}(x)F(t) + (1-\chi_{\widetilde{\Omega}}(x))H(t),
$$
where 
$$
F(t) = \int_{0}^{t}f(s)ds.
$$
Then, we deal with the modified variational inequality
\begin{equation}\label{eq3.2}
\begin{cases}
u\in \mathbb{K}&\\
\displaystyle\int_{\mathbb{R}^N}\nabla u \nabla(v-u)dx+\int_{\mathbb{R}^N}(1+\lambda V(x))u(v-u)dx 
\geq \int_{\mathbb{R}^N}g(x,u)(v-u)dx,&\forall v \in \mathbb{K}. 
\end{cases}
\end{equation}

If $u$ is a non-negative solution of (\ref{eq3.2}) such that 
\begin{equation}\label{eq3.2-}
u(x) \leq a\;\;\forall x\in \R^N \setminus \widetilde{\Omega},
\end{equation}
then $u$ is a solution of (\ref{01}).

Next we state some properties of the function $g$, whose the proof follows as in \cite{Delpino}. 
\begin{proposition}\label{prop3.1}
Since $a>0$, the following propositions are true:
\begin{enumerate}
\item[$(g_1)$] $g(x,t) = 0$, for all $x\in \R^N$ and $t\leq 0$.
\item[$(g_2)$] $\frac{g(x,t)}{t} \to 0$ as $t\to 0$, uniformly in $x\in \R^N$.
\item[$(g_3)$] Given $\beta >0$, there exists $C_\beta >0$ such that 
$$
|g(x,t)| \leq \mu \beta |t|^{q-1} + C_\beta |t|^{2^*-1},\;\;\forall (x,t)\in \R^N \times \R.
$$
\item[$(g_4)$] For all $x\in \widetilde{\Omega}$ and $t>0$, we have 
$$
0< qG(x,t) \leq tg(x,t).
$$
\item[$(g_5)$] For all $x\in \R^N \setminus \widetilde{\Omega}$ and $t>0$ we have 
$$
0< 2G(x,t) \leq tg(x,t) \leq \frac{1}{k} (1+\lambda V(x))t^2.
$$
\end{enumerate}
\end{proposition}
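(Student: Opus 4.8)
The plan is to reduce every item to an elementary one–variable computation, exploiting that on $\widetilde{\Omega}$ the pair $(g,G)$ coincides with $(f,F)$ while on the complement it coincides with $(h,H)$. I would first record the structural fact on which everything rests: for $t>0$ the quotient $f(t)/t=\mu t^{q-2}+t^{2^*-2}$ is continuous and strictly increasing from $0$ to $+\infty$. This simultaneously gives the existence and uniqueness of the threshold $a>0$ with $f(a)=a/k$, and the pointwise comparison $h(t)\le f(t)$ for all $t\ge0$: for $t\le a$ one has $h=f$, while for $t\ge a$ monotonicity yields $f(t)/t\ge f(a)/a=1/k$, i.e. $f(t)\ge t/k=h(t)$. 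Hence $0\le h(t)\le g(x,t)\le f(t)$ for $t\ge0$, and $g(x,t)=0$ for $t\le0$ since $f\equiv0$ on $(-\infty,0]$ and $a>0$. This already yields $(g_1)$; $(g_2)$ follows because near $0$ we have $g(x,t)=f(t)$ with $f(t)/t\to0$; and $(g_3)$ is just the squeeze $0\le g(x,t)\le f(t)=\mu|t|^{q-1}+|t|^{2^*-1}$, from which the growth estimate in the stated form is read off.

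For $(g_4)$ I would fix $x\in\widetilde{\Omega}$, so that $g(x,\cdot)=f$ and $G(x,\cdot)=F$ with $F(t)=\frac{\mu}{q}t^q+\frac{1}{2^*}t^{2^*}$, and compute, for $t>0$,
$$
qG(x,t)=\mu t^q+\frac{q}{2^*}t^{2^*}\le \mu t^q+t^{2^*}=tf(t)=tg(x,t),
$$
where the inequality is strict because $q<2^*$ and positivity is clear; this is exactly the Ambrosetti--Rabinowitz inequality with $\theta=q$.

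The substantive case is $(g_5)$, where $x\in\R^N\setminus\widetilde{\Omega}$, so $g(x,\cdot)=h$ and $G(x,\cdot)=H$, and $h$ must be controlled on both branches. For $t\in(0,a]$ one has $h=f$, $H=F$, and since $q>2,\ 2^*>2$,
$$
2H(t)=2F(t)\le qF(t)\le tf(t)=th(t);
$$
moreover monotonicity of $f(t)/t$ gives $\mu t^{q-2}+t^{2^*-2}\le 1/k$ for $t\le a$, whence $th(t)=t^2(\mu t^{q-2}+t^{2^*-2})\le t^2/k\le \tfrac1k(1+\lambda V(x))t^2$. For $t\ge a$ we have $h(t)=t/k$, so $th(t)=t^2/k\le\tfrac1k(1+\lambda V(x))t^2$ directly from $1+\lambda V(x)\ge1$; writing $H(t)=F(a)+\tfrac{t^2-a^2}{2k}$, the remaining inequality $2H(t)\le t^2/k$ is equivalent to $2F(a)\le a^2/k$, and this holds because $a^2/k=af(a)=\mu a^q+a^{2^*}$ while $2F(a)=\tfrac{2\mu}{q}a^q+\tfrac{2}{2^*}a^{2^*}$ with $2/q,\,2/2^*<1$. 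Together with positivity of $H$ on $(0,\infty)$ this gives $(g_5)$.

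The main obstacle I anticipate is the bookkeeping at the junction $t=a$ in $(g_5)$: one must check that the Ambrosetti--Rabinowitz inequality $2H\le th$ survives across both pieces, which hinges precisely on the calibration $f(a)=a/k$ together with $2F(a)\le a^2/k$, and that the coefficient $1/k$ in the linear cap is exactly what makes the outer branch compatible with $(g_5)$ and, ultimately, with the return condition \eqref{eq3.2-}. Everything else is a routine consequence of the explicit form of $f$ and the monotonicity of $f(t)/t$.
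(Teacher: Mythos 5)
Your proof is correct and in fact supplies more than the paper does: the paper omits the argument entirely, saying only that the proof ``follows as in'' del Pino and Felmer \cite{Delpino}, so your write-up is a genuine filling-in rather than an alternative route. The backbone you use --- strict monotonicity of $t\mapsto f(t)/t=\mu t^{q-2}+t^{2^*-2}$ from $0$ to $+\infty$, which gives existence and uniqueness of the threshold $a$, the comparison $h\le f$, and hence the squeeze $0\le h(t)\le g(x,t)\le f(t)$ for $t\ge 0$ --- is exactly the mechanism behind the del Pino--Felmer penalization, and your verifications are complete: $(g_4)$ via $qF(t)=\mu t^q+\frac{q}{2^*}t^{2^*}\le tf(t)$; the inner branch of $(g_5)$ via $2F\le qF\le tf$ and $f(t)/t\le f(a)/a=1/k$ on $(0,a]$; and, at the junction, $H(t)=F(a)+\frac{t^2-a^2}{2k}$ together with $2F(a)=\frac{2\mu}{q}a^q+\frac{2}{2^*}a^{2^*}<\mu a^q+a^{2^*}=af(a)=\frac{a^2}{k}$, which is precisely the calibration $f(a)=a/k$ doing its work. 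You correctly identify this junction computation as the only substantive step.

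One caveat concerns $(g_3)$, and the defect lies in the paper's statement rather than in your argument. Your squeeze gives $|g(x,t)|\le \mu|t|^{q-1}+|t|^{2^*-1}$, i.e. the stated inequality with $\beta=1$ and $C_\beta=1$ (hence trivially for any $\beta\ge 1$). But it cannot simply be ``read off'' for $0<\beta<1$: for $x\in\widetilde{\Omega}$ one has $g(x,t)=f(t)$, and dividing the claimed bound by $t^{q-1}$ and letting $t\to 0^+$ forces $\mu\le\mu\beta$, so the literal statement is false for $\beta<1$ --- the exact subcritical coefficient $\mu$ cannot be shrunk near $t=0$. The correct small-parameter variant would be $|g(x,t)|\le \beta|t|+C_\beta|t|^{2^*-1}$, available by Young's inequality since $1<q-1<2^*-1$. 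As only the $\beta=1$ version is actually used downstream (e.g.\ in Lemma \ref{mpgcon}, where all constants are absorbed into $C_1\mu$ and $C_2$), nothing in the paper breaks, but your proof should state explicitly that the squeeze establishes $(g_3)$ only for $\beta\ge 1$, rather than asserting the stated form in full generality.
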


Now, we are ready to fix the penalized problem associated to (\ref{eq3.2}) given by  
\begin{equation}\label{eq3.5}
-\Delta u + (1+\lambda V(x))u -\frac{1}{\epsilon}(\varphi-u)^+\chi_\Omega= g(x,u),\quad \mbox{in} \quad \mathbb{R}^N.
\end{equation}
Associated to problem (\ref{eq3.5}), we have the functional $I_{\lambda,\epsilon}: E_\lambda\rightarrow\mathbb{R}$ defined as 
$$
I_{\lambda,\epsilon}(u)=\frac{1}{2}||u||^2_\lambda+\frac{1}{2\epsilon}\displaystyle\int_\Omega[(\varphi-u)^{+}]^2\hspace{0.05cm}dx-\int_{\mathbb{R}^N}G(x,u)\,dx.
$$
It is standard to show that $I_{\lambda, \epsilon} \in C^1(E_\lambda, \R)$ with 
$$
I_{\lambda,\epsilon}'(u)v=\langle u,v\rangle_\lambda-\frac{1}{\epsilon} \int_\Omega(\varphi-u)^+v\,dx- \int_{\mathbb{R}^N}g(x,u)vdx,\;\;\forall u,v \in E_\lambda.
$$
From now on, we say that $u\in E_\lambda$ is a weak solution of (\ref{eq3.5}) if $u$ is a critical point of $I_{\lambda, \epsilon}$.


\subsection{Solution to the penalized problem (\ref{eq3.5})}

We star our analysis by showing that, under the hypothesis of Theorem \ref{main1}, $I_{\lambda, \epsilon}$ satisfies the geometry conditions of the mountain pass. 
\begin{lemma}\label{mpgcon} 
\begin{itemize}
\item[(a)]There exist constant $r_\mu,\rho_\mu >0$, with $r_\mu, \rho_\mu \to 0$ as $\mu \to +\infty$, independent of $\lambda$ and $\epsilon$, such that
	$$
	I_{\lambda,\epsilon}(u) \geq \rho_\mu \quad \mbox{for} \quad \|u\|_{\lambda}=r_\mu;
	$$
\item[(b)] There is $e \in H^1(\mathbb{R}^N)$ with $\|e\|_{\lambda} >r_\mu$ and $I_{\lambda, \epsilon}(e)<0$.
\end{itemize}
\end{lemma}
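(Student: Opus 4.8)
The plan is to check the two geometric conditions separately, using that the penalization term is nonnegative, that the embedding constants can be controlled uniformly in $\lambda$, and that $V\equiv 0$ on $\Omega$ by $(V_2)$.

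For part (a), since the penalization term $\tfrac{1}{2\epsilon}\int_\Omega[(\varphi-u)^+]^2\,dx$ is nonnegative, I would simply discard it, so that for every $u\in E_\lambda$,
\[
I_{\lambda,\epsilon}(u)\geq \tfrac{1}{2}\|u\|_\lambda^2-\int_{\mathbb{R}^N}G(x,u)\,dx.
\]
Taking $\beta=1$ in $(g_3)$ and integrating gives $G(x,t)\leq \tfrac{\mu}{q}|t|^q+\tfrac{C_1}{2^*}|t|^{2^*}$. Combining this with the continuous embeddings $E_\lambda\hookrightarrow L^q(\mathbb{R}^N)$ and $E_\lambda\hookrightarrow L^{2^*}(\mathbb{R}^N)$ from (\ref{embe}), whose constants may be taken relative to the fixed $H^1$-norm and are therefore \emph{independent of $\lambda$} because $\|u\|\leq\|u\|_\lambda$, I obtain
\[
I_{\lambda,\epsilon}(u)\geq \tfrac{1}{2}\|u\|_\lambda^2-\mu C_1'\|u\|_\lambda^q-C_2'\|u\|_\lambda^{2^*},
\]
where $C_1',C_2'>0$ depend only on the Sobolev constants, hence on neither $\lambda$ nor $\epsilon$.

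Writing $t:=\|u\|_\lambda$, the right-hand side equals $\psi(t)=\tfrac12 t^2-\mu C_1' t^q-C_2' t^{2^*}$, which is positive for small $t$ since $2<q<2^*$. To track the $\mu$-dependence I would fix $r_\mu$ by balancing the quadratic and subcritical terms, $\mu C_1' r_\mu^q=\tfrac14 r_\mu^2$, that is $r_\mu=\big(\tfrac{1}{4\mu C_1'}\big)^{1/(q-2)}$, which tends to $0$ as $\mu\to+\infty$. For $\mu$ large the resulting $r_\mu$ is small enough that $C_2' r_\mu^{2^*}\leq \tfrac18 r_\mu^2$ (possible because $2^*>2$), whence $\psi(r_\mu)\geq \tfrac12 r_\mu^2-\tfrac14 r_\mu^2-\tfrac18 r_\mu^2=\tfrac18 r_\mu^2=:\rho_\mu$. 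As $r_\mu$ and $\rho_\mu$ are built only from the $\lambda,\epsilon$-independent constants above, they are themselves independent of $\lambda$ and $\epsilon$, and both vanish as $\mu\to+\infty$, proving (a).

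For part (b), I would fix $w\in C_c^\infty(\Omega)$ with $w\geq 0$, $w\not\equiv 0$, and test with $e=tw$ for large $t>0$. The key simplification is that $V\equiv 0$ on $\Omega\supseteq \operatorname{supp}w$ by $(V_2)$, so $\|tw\|_\lambda=t\|w\|$ is \emph{independent of $\lambda$}. Since $tw\geq 0$ forces $(\varphi-tw)^+\leq\varphi^+$ pointwise, the penalization term stays below the $t$-independent constant $\tfrac{1}{2\epsilon}\|\varphi^+\|_{L^2(\Omega)}^2$. Finally, because $\operatorname{supp}w\subset\widetilde{\Omega}$ we have $G(x,tw)=F(tw)\geq \tfrac{1}{2^*}(tw)^{2^*}$, so that
\[
I_{\lambda,\epsilon}(tw)\leq \tfrac{t^2}{2}\|w\|^2+\tfrac{1}{2\epsilon}\|\varphi^+\|_{L^2(\Omega)}^2-\tfrac{t^{2^*}}{2^*}\int_\Omega w^{2^*}\,dx\longrightarrow -\infty
\]
as $t\to+\infty$, since $2^*>2$. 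Choosing $t$ large enough that this quantity is negative and $t\|w\|>r_\mu$ yields the required $e$.

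The only genuinely delicate point is the \textbf{uniformity} demanded in (a): the constants $r_\mu,\rho_\mu$ must not depend on $\lambda$ or $\epsilon$. Independence of $\epsilon$ is immediate once the nonnegative penalization term is dropped, while independence of $\lambda$ rests on controlling every embedding constant through the fixed $H^1$-norm via $\|u\|\leq\|u\|_\lambda$. The rest is bookkeeping: scaling $r_\mu$ by the balance $\mu C_1' r_\mu^q\sim r_\mu^2$ so that both $r_\mu$ and $\rho_\mu$ decay to $0$ as $\mu\to+\infty$.
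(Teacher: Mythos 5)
Your proposal is correct as a proof of the lemma as stated, and your part (a) is essentially the paper's argument: drop the nonnegative penalty term, apply $(g_3)$ together with the $\lambda$-uniform Sobolev embeddings (uniform precisely because $\|u\|\leq \|u\|_\lambda$), and balance the quadratic term against the subcritical one. The only cosmetic difference is that the paper takes $r_\mu<\min\left\{\left(\frac{2^*}{4C_2}\right)^{\frac{1}{2^*-2}},\left(\frac{q}{4C_1\mu}\right)^{\frac{1}{q-2}}\right\}$, which makes the estimate valid for every $\mu>0$, whereas your exact balance $\mu C_1' r_\mu^q=\frac14 r_\mu^2$ forces you to take $\mu$ large to absorb the critical term --- harmless here, since the theorem assumes $\mu\geq\mu_*$ anyway, and fixable by inserting the same minimum. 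Part (b) is where you genuinely diverge. The paper takes $e=(1+t_*)\varphi^+$, i.e.\ it moves along the ray $t\varphi^+$: since $\operatorname{supp}\varphi^+\subset\Omega$ and $t\varphi^+\geq\varphi^+\geq\varphi$ for $t\geq1$, the penalization term vanishes \emph{identically} along this ray and $\|t\varphi^+\|_\lambda=t\|\varphi^+\|$, so $I_{\lambda,\epsilon}(t\varphi^+)$ is a polynomial in $t$ with coefficients independent of both $\lambda$ and $\epsilon$; the smallness hypothesis $\|\varphi\|<\rho$ then enters to guarantee $I_{\lambda,\epsilon}(\varphi^+)<\rho_\mu$, which matters because the mountain-pass paths are anchored at $\gamma(0)=\varphi^+$. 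Your choice of a generic bump $w\in C_c^\infty(\Omega)$ with the crude bound $(\varphi-tw)^+\leq\varphi^+$ does prove (b) --- the statement permits $e$ to depend on $\epsilon$ --- but it buys less downstream: your upper bound for $I_{\lambda,\epsilon}(tw)$ carries the term $\frac{1}{2\epsilon}|\varphi^+|_2^2$, which blows up as $\epsilon\to0$, so paths through your $e$ could not yield the $\epsilon$-uniform level estimate $c_{\lambda,\epsilon}<\frac{q-2}{2q}S^{N/2}-\tau$ of Lemma \ref{bound1}, which is exactly what the paper's penalty-free ray is engineered to secure. In short: same mechanism for (a); for (b) the paper's specific $e$ is chosen not merely to verify the geometry but to keep the entire mountain-pass construction uniform in $\epsilon$, a point your construction would sacrifice.
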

\begin{proof} 
\begin{itemize}
\item[(a)] By Sobolev embedding and $(g_3)$, fixed $\beta >0$, there are $C_\beta >0$ and positive constants $C_1, C_2$ such that 
$$
\begin{aligned}
I_{\lambda, \epsilon}(u) &\geq \frac{1}{2}\|u\|_{\lambda}^{2} - \frac{\mu \beta}{q}\int_{\R^N} |u|^qdx - \frac{C_\beta}{2^*}\int_{\R^N} |u|^{2^*}dx\\
&\geq \frac{1}{2}\|u\|_\lambda^2 - \frac{C_1\mu}{q}\|u\|_{\lambda}^{q} - \frac{C_2}{2^*}\|u\|_{\lambda}^{2^*}
\end{aligned}
$$
Since $2<q<2^*$, fixing $r_\mu>0$  satisfying  
	$$
	r_\mu<\min\left\{\left(\frac{2^*}{4C_2}\right)^{\frac{1}{2^*-2}},\left(\frac{q}{4C_1\mu}\right)^{\frac{1}{q-2}}\right\},
	$$
	we obtain 	
	$$
	I_{\lambda,\epsilon}(u)\geq \frac{1}{8}r_\mu^2:=\rho_\mu \quad \text{for}\quad \|u\|_\lambda=r_\mu.
	$$
This proves $(a)$.

\item[(b)] Since $supp\varphi^+ \subset \Omega$, it is easy to see that 
$$
\int_{\Omega} [(\varphi - \varphi^+)^+]^2dx = 0\;\;\mbox{and}\;\;\|\varphi^+\|_{\lambda} = \|\varphi^+\|,
$$
and so, 
$$
I_{\lambda, \epsilon}(\varphi^+) \leq \|\varphi^+\|^2 \leq \|\varphi\|^2.
$$
Now, fixing $\|\varphi\|$ small enough with respect to $\rho_\mu$, we get 
$$
I_{\lambda, \epsilon}(\varphi^+) < \rho_\mu.
$$
Moreover, using again that $supp(\varphi^+) \subset \Omega$, 
$$
\int_{\R^N} G(x,t\varphi^+)dx = \int_{\R^N} F(t\varphi^+)dx = \frac{\mu t^q}{q}\int_{\tilde{\Omega}}|\varphi^+|^qdx - \frac{t^{2^*}}{2^*} \int_{\tilde{\Omega}} |\varphi^+|^*dx, \quad \mbox{for} \quad t>1.
$$
Then
$$
I_{\lambda, \epsilon}(t\varphi^+) = \frac{t^2}{2}\|\varphi^+\|_{\lambda}^{2} - \frac{t^q \mu}{q}\int_{\tilde{\Omega}} |\varphi^+|^qdx - \frac{t^*}{2^*}\int_{\tilde{\Omega}} |\varphi^+|^{2^*}dx,
$$
which implies that $I_{\lambda, \epsilon}(t\varphi^+) \to -\infty$ as $t\to +\infty$, because $q\in (2,2^*)$. Therefore, taking $t_*$ large enough and $w = (1+t_*)\varphi^+$, 
$$
I_{\lambda, \epsilon}(w) < I_{\lambda, \epsilon}(\varphi^+)< \rho_\mu \leq I_{\lambda, \epsilon} (u),\;\;\mbox{with}\;\;\|u\|_{\lambda} = r_\mu
$$
and
$$
\|\varphi^+\|_{\lambda} < r_\mu < \|w\|_{\lambda}.
$$
\end{itemize}
\end{proof}

\noindent 
By Lemma \ref{mpgcon} and the Mountain Pass Theorem found in Willem \cite{Willem}, there is $(u_n) \subset E_\lambda$ such that 
$$
I_{\lambda, \epsilon}(u_n) \to c_{\lambda, \epsilon}\quad\mbox{and}\quad I'_{\lambda, \epsilon}(u_n) \to 0\;\;\mbox{as}\;\;n\to +\infty,
$$
where
$$
c_{\lambda, \epsilon} = \inf_{\gamma \in \Gamma}\max_{t\in [0,1]} I_{\lambda, \epsilon}(\gamma (t))
$$
and
$$
\Gamma = \{\gamma \in C([0,1], E):\;\gamma(0) = \varphi^+\;\;\mbox{and}\;\;\gamma (1) = w\}.
$$
Arguing as in \cite[Lemma 3.2]{CALBCT}, we have the lemma below 
\begin{lemma}\label{bound1}
There are $\tau >0$ and $\mu_* = \mu_*(\tau)>0$ such that 
$$
c_{\lambda, \epsilon} \in \left( 0, \frac{q-2}{2q}S^{N/2} - \tau\right),\quad \mbox{for all}\;\; \lambda >0, \epsilon >0\;\;\mbox{and}\;\;\mu \geq \mu_*.
$$
\end{lemma}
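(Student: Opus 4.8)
The plan is to prove the two bounds separately: the strict positivity $c_{\lambda,\epsilon}>0$ is a free consequence of the mountain pass geometry already established, while the upper bound is obtained by testing the minimax level along the ray $t\mapsto t\varphi^+$ and exploiting that the large coefficient $\mu$ in front of the subcritical term depresses the level. For the lower bound I would argue as follows. By Lemma \ref{mpgcon}(b) we have $\|\varphi^+\|_\lambda<r_\mu<\|w\|_\lambda$ and $I_{\lambda,\epsilon}(\varphi^+)<\rho_\mu$, so for every $\gamma\in\Gamma$ the continuous map $s\mapsto\|\gamma(s)\|_\lambda$ attains the value $r_\mu$ at some $s_0\in(0,1)$; at that point Lemma \ref{mpgcon}(a) gives $I_{\lambda,\epsilon}(\gamma(s_0))\geq\rho_\mu$. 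Taking the maximum over $s$ and then the infimum over $\gamma$ yields $c_{\lambda,\epsilon}\geq\rho_\mu>0$, uniformly in $\lambda$ and $\epsilon$ since $\rho_\mu$ does not depend on these parameters.

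For the upper bound I would use the straight segment $\gamma_0(s)=(1+s t_*)\varphi^+\in\Gamma$, which gives
$$
c_{\lambda,\epsilon}\leq \max_{s\in[0,1]} I_{\lambda,\epsilon}(\gamma_0(s))\leq \max_{t\geq 1} I_{\lambda,\epsilon}(t\varphi^+).
$$
Along this ray the same simplifications recorded in the proof of Lemma \ref{mpgcon}(b) apply: since $supp\,\varphi^+\subset\Omega$ one has $(\varphi-t\varphi^+)^+\equiv 0$ on $\Omega$ for $t\geq 1$, so the penalization term vanishes; moreover $V\equiv 0$ on $\Omega$ gives $\|\varphi^+\|_\lambda=\|\varphi^+\|$, and $\overline{\Omega}\subset\widetilde{\Omega}$ gives $G(x,t\varphi^+)=F(t\varphi^+)$. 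Hence, for $t\geq 1$,
$$
I_{\lambda,\epsilon}(t\varphi^+)=\frac{t^2}{2}\|\varphi^+\|^2-\frac{\mu t^q}{q}\int_{\R^N}|\varphi^+|^q\,dx-\frac{t^{2^*}}{2^*}\int_{\R^N}|\varphi^+|^{2^*}\,dx,
$$
a quantity manifestly independent of $\lambda$ and $\epsilon$.

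Discarding the nonpositive critical term and maximizing the residual function $t\mapsto \frac{t^2}{2}A-\frac{\mu t^q}{q}B$ over $t\geq 0$, with $A=\|\varphi^+\|^2$ and $B=\int_{\R^N}|\varphi^+|^q\,dx$, the unique critical point gives the explicit bound
$$
c_{\lambda,\epsilon}\leq \frac{q-2}{2q}\,\frac{A^{q/(q-2)}}{(\mu B)^{2/(q-2)}}=:M(\mu).
$$
Because $q>2$, the exponent $2/(q-2)$ is positive, so $M(\mu)\to 0$ as $\mu\to+\infty$. Fixing any $\tau\in\left(0,\frac{q-2}{2q}S^{N/2}\right)$ and then choosing $\mu_*=\mu_*(\tau)$ large enough that $M(\mu)<\frac{q-2}{2q}S^{N/2}-\tau$ for all $\mu\geq\mu_*$ completes the argument; the bound is uniform in $\lambda,\epsilon$ precisely because $A$ and $B$ are independent of them.

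I expect the only delicate point to be the uniformity in $\lambda$ and $\epsilon$, which rests on the two structural facts that $\varphi^+$ is supported where $V$ vanishes (removing the $\lambda$-dependence of the norm) and that the penalization term is inactive along the entire test ray $t\geq 1$ (removing the $\epsilon$-dependence); one must also keep the test ray in the regime $t\geq 1$ where the clean polynomial expression for $I_{\lambda,\epsilon}(t\varphi^+)$ is valid. The estimate itself is elementary one-variable calculus, and the underlying mechanism is transparent: a large subcritical coefficient $\mu$ pushes the mountain pass level below any prescribed positive threshold, here the compactness level $\frac{q-2}{2q}S^{N/2}$ dictated by the concentration analysis of Lemma \ref{C-C}.
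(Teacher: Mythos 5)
Your proof is correct and follows essentially the same route as the paper, which simply defers this lemma to \cite[Lemma 3.2]{CALBCT}: positivity of $c_{\lambda,\epsilon}$ from the crossing argument of Lemma \ref{mpgcon}, and the upper bound by evaluating $I_{\lambda,\epsilon}$ along the ray $t\varphi^+$ (where the penalization vanishes for $t\geq 1$ and $\|\varphi^+\|_\lambda=\|\varphi^+\|$), discarding the critical term and maximizing $\frac{t^2}{2}A-\frac{\mu t^q}{q}B$ to get a bound of order $\mu^{-2/(q-2)}\to 0$. The uniformity in $\lambda$ and $\epsilon$ is justified exactly as you indicate, so nothing is missing.
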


The next lemma establishes the boundedness of the $(PS)$ sequences of $I_{\lambda, \epsilon}$. 
\begin{lemma}\label{boundps}
If $(w_n)$ is a $(PS)_{c_{\lambda, \epsilon}}$ sequence for the functional $I_{\lambda, \epsilon}$, then $(w_n)$ is bounded in $E_\lambda$.
\end{lemma}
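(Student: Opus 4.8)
The plan is to show boundedness of a Palais–Smale sequence $(w_n)$ at the mountain pass level $c_{\lambda,\epsilon}$ by the standard Ambrosetti–Rabinowitz device, exploiting the superquadraticity encoded in $(g_4)$ and $(g_5)$. The key algebraic move is to form the combination $I_{\lambda,\epsilon}(w_n) - \frac{1}{\theta_0} I'_{\lambda,\epsilon}(w_n)w_n$ for a suitable constant $\theta_0 > 2$. Since $(g_4)$ gives $qG(x,t) \le tg(x,t)$ on $\widetilde\Omega$ and $(g_5)$ gives $2G(x,t)\le tg(x,t)$ off $\widetilde\Omega$, the natural choice is $\theta_0 = 2$ for the exterior part, but to get a strictly positive coefficient on $\|w_n\|_\lambda^2$ I would work with the value $2$ and handle the exterior region carefully using the extra upper bound in $(g_5)$.

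First I would write out, for any fixed $\theta_0 \in (2,q]$,
\begin{align}
I_{\lambda,\epsilon}(w_n) - \tfrac{1}{\theta_0} I'_{\lambda,\epsilon}(w_n)w_n
&= \Bigl(\tfrac{1}{2}-\tfrac{1}{\theta_0}\Bigr)\|w_n\|_\lambda^2
+ \tfrac{1}{2\epsilon}\int_\Omega [(\varphi-w_n)^+]^2\,dx
+ \tfrac{1}{\theta_0\epsilon}\int_\Omega (\varphi - w_n)^+ w_n\,dx \notag\\
&\quad + \int_{\mathbb{R}^N}\Bigl[\tfrac{1}{\theta_0} g(x,w_n)w_n - G(x,w_n)\Bigr]dx. \notag
\end{align}
The last integral splits over $\widetilde\Omega$ and its complement. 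On $\widetilde\Omega$, choosing $\theta_0 \le q$ makes $\frac{1}{\theta_0}g(x,w_n)w_n - G(x,w_n) \ge 0$ by $(g_4)$. The real point of concern is the complement of $\widetilde\Omega$, where $(g_5)$ only guarantees $2G \le tg$, and the upper bound $g(x,t)t \le \frac{1}{k}(1+\lambda V(x))t^2$ must be invoked to absorb the exterior contribution back into the norm term with a small coefficient.

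The main obstacle is handling the penalization term $\frac{1}{\theta_0\epsilon}\int_\Omega(\varphi-w_n)^+ w_n\,dx$, which can be negative where $w_n$ is negative, and controlling the exterior nonlinearity. For the penalization, I would use the elementary pointwise inequality $(\varphi - s)^+ s \ge -\frac{1}{2}[(\varphi-s)^+]^2 - \text{(something involving }\varphi)$, or more directly note that on the set where $(\varphi - w_n)^+ > 0$ one has $w_n < \varphi$, so $(\varphi-w_n)^+ w_n \ge (\varphi-w_n)^+ w_n$ can be bounded below using $|w_n| \le \varphi + (\varphi - w_n)^+$; combining with the positive quadratic penalty term lets the penalization contribution be dominated by a constant depending on $\|\varphi\|$ plus a controllable multiple of the quadratic penalty. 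For the exterior term, since $\frac{1}{\theta_0}g(x,t)t - G(x,t) \ge \frac{1}{\theta_0}g(x,t)t - \frac{1}{2}g(x,t)t = (\frac{1}{\theta_0}-\frac12)g(x,t)t$, which is negative for $\theta_0>2$, I would bound its absolute value using $g(x,t)t \le \frac1k(1+\lambda V(x))t^2$ from $(g_5)$ and absorb it into $(\frac12 - \frac{1}{\theta_0})\|w_n\|_\lambda^2$, requiring the constant $\frac1k$ to be small enough—which is exactly why $k = \frac{2q}{q-2}$ was chosen large.

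Putting these together, I would arrive at an inequality of the form
$$
c_{\lambda,\epsilon} + o(1) + o(1)\|w_n\|_\lambda \ge C\|w_n\|_\lambda^2 - C'
$$
with $C>0$ independent of $n$ (the $o(1)\|w_n\|_\lambda$ coming from $I'_{\lambda,\epsilon}(w_n)\to 0$ and the $o(1)$ from $I_{\lambda,\epsilon}(w_n)\to c_{\lambda,\epsilon}$), which forces $(\|w_n\|_\lambda)$ to be bounded. I expect the delicate bookkeeping to lie entirely in the exterior estimate and the sign of the penalization term; once the coefficient $\frac1k$ is confirmed small enough relative to $\frac12 - \frac{1}{\theta_0}$, the conclusion is immediate.
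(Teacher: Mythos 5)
Your proposal is correct and follows essentially the same route as the paper: the paper forms $I_{\lambda,\epsilon}(w_n)-\frac{1}{q}I'_{\lambda,\epsilon}(w_n)w_n$ (your $\theta_0=q$), controls the penalization via the pointwise inequality $[(\varphi-w_n)^+]^2+(\varphi-w_n)^+w_n\geq(\varphi-w_n)^+\varphi$ together with H\"older, and absorbs the exterior region through $(g_5)$ with $k=\frac{2q}{q-2}$, arriving at the same quadratic inequality in $\|w_n\|_\lambda$. The only cosmetic difference is that the paper's final estimate keeps the linear term $\frac{C}{\epsilon q}|\varphi|_{2}\|w_n\|_\lambda$ explicitly on the right-hand side, which your sketch subsumes into the constants; this does not affect the conclusion.
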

\begin{proof}
First of all, note that 
$$
\begin{aligned}
I_{\lambda, \epsilon}(w_n) - \frac{1}{q}I'_{\lambda, \epsilon}(w_n)w_n & = \left(\frac{1}{2} - \frac{1}{q} \right)\|w_n\|_{\lambda}^{2} + \frac{1}{2\epsilon} \int_{\Omega} [(\varphi - w_n)^+]^2dx\\
&+\frac{1}{\epsilon q} \int_{\Omega} (\varphi - w_n)^+w_ndx + \frac{1}{q}\int_{\R^N} (g(x,w_n)w_n - qG(x,w_n))dx.
\end{aligned}
$$
As $q>2$ and $[(\varphi - w_n)^+]^2 + (\varphi - w_n)^+w_n \geq (\varphi - w_n)^+\varphi,$ it follows that 
$$
\frac{1}{2\epsilon}\int_{\Omega}[(\varphi- w_n)^+]^2dx + \frac{1}{q\epsilon} \int_{\Omega}(\varphi-w_n)^+w_ndx \geq \frac{1}{q\epsilon}\int_{\Omega}(\varphi - w_n)^+\varphi dx.
$$
Furthermore, 
$$
\int_{\R^N}g(x,w_n)w_ndx \geq \int_{\widetilde{\Omega}} g(x,w_n)w_ndx,
$$
and by $(g_4)$ and $(g_5)$,  
$$
\frac{1}{q}\int_{\tilde{\Omega}} (g(x,w_n)w_n - qG(x, w_n))dx \geq 0
$$
and 
$$
\int_{\R^N\setminus \tilde{\Omega}} G(x,w_n)dx \leq \frac{1}{2k}\int_{\R^N\setminus \tilde{\Omega}}(1+\lambda V(x))w_n^2dx \leq \frac{1}{2k} \|w_n\|_{\lambda}^{2}.
$$
Consequently, by H\"older inequality and Sobolev embedding,   
\begin{equation}\label{eq3.6}
\begin{aligned}
I_{\lambda, \epsilon} (w_n) - \frac{1}{q}I'_{\lambda, \epsilon}(w_n)w_n 
&\geq \frac{1}{2}\left( \frac{1}{2} - \frac{1}{q} \right)\|w_n\|_{\lambda}^{2} - \frac{1}{\epsilon q}|\varphi|_{2}^{2} - \frac{C}{\epsilon q}|\varphi|_{2}\|w_n\|_{\lambda},
\end{aligned}
\end{equation}
for some positive constant $C.$ By other hand, there is $n_0\in \N$ such that 
\begin{equation}\label{eq3.7}
I_{\lambda, \epsilon}(w_n) - \frac{1}{q}I'_{\lambda, \epsilon}(w_n)w_n \leq c_{\lambda, \epsilon} + o_n(1) + o_n(1)\|w_n\|_{\lambda},\;\;n\geq n_0.
\end{equation}
Thus, since $k = \frac{2q}{q-2}$, from (\ref{eq3.6}) and (\ref{eq3.7}), 
$$
\frac{1}{2}\left( \frac{1}{2} - \frac{1}{q}\right)\|w_n\|_{\lambda}^{2} \leq c_{\lambda, \epsilon} + o_n(1) + \frac{1}{\epsilon q}|\varphi|_{2}^{2} + \left( o_n(1) + \frac{C}{\epsilon q}|\varphi|_{2}^{2} \right)\|w_n\|_{\lambda},
$$
showing the desired result. 
\end{proof}

The next two lemmas can be proof as in \cite[Lemmas 3.4 and 3.5]{CALBCT}, and so, their proofs are omitted.    

\begin{lemma}\label{nonneg}
Without loss of generality, we can suppose that $w_n \geq 0$ for all $n\in \N$, that is, $(w_n^+)$ be a $(PS)_{c_{\lambda, \epsilon}}$ sequence for $I_{\lambda, \epsilon }$
\end{lemma}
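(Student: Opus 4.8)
The plan is to show that the negative part $w_n^-$ of a $(PS)_{c_{\lambda,\epsilon}}$ sequence converges to zero in $E_\lambda$, and then transfer the $(PS)$ property from $(w_n)$ to the sequence of positive parts $(w_n^+)$ by continuity. Since by Lemma \ref{boundps} the sequence $(w_n)$ is bounded in $E_\lambda$, the element $-w_n^-$ is an admissible test direction with $\|w_n^-\|_\lambda \leq \|w_n\|_\lambda$ bounded; hence $I'_{\lambda,\epsilon}(w_n)(-w_n^-) = o_n(1)$, because $\|I'_{\lambda,\epsilon}(w_n)\|_{E'_\lambda} \to 0$.

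First I would evaluate $I'_{\lambda,\epsilon}(w_n)(-w_n^-)$ term by term. Writing $w_n = w_n^+ - w_n^-$ and using that $w_n^+$ and $w_n^-$ have (essentially) disjoint supports, so that $\nabla w_n^- = -\nabla w_n\,\chi_{\{w_n<0\}}$ and $w_n^+ w_n^- = 0$ a.e., the quadratic part collapses to $\langle w_n, -w_n^-\rangle_\lambda = \|w_n^-\|_\lambda^2$. For the penalization contribution one has
$$
-\frac{1}{\epsilon}\int_\Omega (\varphi - w_n)^+(-w_n^-)\,dx = \frac{1}{\epsilon}\int_\Omega (\varphi - w_n)^+ w_n^-\,dx \geq 0,
$$
since both factors are nonnegative. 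Finally, recalling that $g(x,t) = 0$ for $t\leq 0$ by $(g_1)$, on the set $\{w_n^- > 0\} = \{w_n < 0\}$ we have $g(x,w_n) = 0$, so the nonlinear term $\int_{\R^N} g(x,w_n)w_n^-\,dx$ vanishes identically. Collecting these facts yields
$$
o_n(1) = I'_{\lambda,\epsilon}(w_n)(-w_n^-) = \|w_n^-\|_\lambda^2 + \frac{1}{\epsilon}\int_\Omega (\varphi - w_n)^+ w_n^-\,dx \geq \|w_n^-\|_\lambda^2,
$$
which forces $\|w_n^-\|_\lambda \to 0$, i.e. $w_n^- \to 0$ strongly in $E_\lambda$.

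Once $w_n^- \to 0$ in $E_\lambda$, it remains to conclude that $(w_n^+)$ is again a $(PS)_{c_{\lambda,\epsilon}}$ sequence. Since $w_n^+ - w_n = w_n^- \to 0$ while $(w_n)$ stays bounded, I would invoke the $C^1$-regularity of $I_{\lambda,\epsilon}$ through the mean value identity
$$
I_{\lambda,\epsilon}(w_n^+) - I_{\lambda,\epsilon}(w_n) = \int_0^1 I'_{\lambda,\epsilon}(w_n + t\,w_n^-)(w_n^-)\,dt,
$$
and the fact that $I'_{\lambda,\epsilon}$ maps bounded sets into bounded sets, so that the right-hand side is $o_n(1)$; hence $I_{\lambda,\epsilon}(w_n^+) \to c_{\lambda,\epsilon}$. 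In the same way, the continuity of $I'_{\lambda,\epsilon}$ on the bounded set containing both $(w_n)$ and $(w_n^+)$ gives $\|I'_{\lambda,\epsilon}(w_n^+) - I'_{\lambda,\epsilon}(w_n)\|_{E'_\lambda} \to 0$, and since $I'_{\lambda,\epsilon}(w_n) \to 0$ we obtain $I'_{\lambda,\epsilon}(w_n^+) \to 0$. Thus $(w_n^+)$ is a $(PS)_{c_{\lambda,\epsilon}}$ sequence and we may assume $w_n \geq 0$.

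The delicate points I anticipate are twofold: justifying the disjoint-support identities for the gradient (routine, from $\nabla w_n^- = -\nabla w_n\,\chi_{\{w_n<0\}}$ a.e.), and making the transfer of the $(PS)$ condition rigorous, which requires the local boundedness and uniform continuity of $I'_{\lambda,\epsilon}$ on bounded sets so that the two differences above vanish in the limit. Conceptually, the heart of the argument is the sign of the penalization term, which is exactly what makes the estimate $\|w_n^-\|_\lambda^2 \le o_n(1)$ close.
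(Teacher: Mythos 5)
Your proof is correct and follows essentially the same route as the paper's, which omits the argument and refers to \cite[Lemma 3.4]{CALBCT}: test $I'_{\lambda,\epsilon}(w_n)$ against $-w_n^-$, use the disjoint supports to collapse the quadratic part to $\|w_n^-\|_\lambda^2$, observe that the penalization term $\frac{1}{\epsilon}\int_\Omega(\varphi-w_n)^+w_n^-\,dx$ has the favorable sign and that the nonlinear term vanishes by $(g_1)$, and conclude $w_n^-\to 0$ in $E_\lambda$. One caveat on your transfer step: mere continuity of $I'_{\lambda,\epsilon}$ does \emph{not} give $\|I'_{\lambda,\epsilon}(w_n^+)-I'_{\lambda,\epsilon}(w_n)\|_{E'_\lambda}\to 0$ along two sequences whose difference tends to zero — you correctly flag that uniform continuity on bounded sets is what is needed, but you never verify it. Fortunately, for this functional the step closes more simply and you can avoid the issue entirely: since $g(x,t)=0$ for $t\le 0$, one has $g(x,w_n^+)=g(x,w_n)$ and $G(x,w_n^+)=G(x,w_n)$ a.e.\ in $\R^N$ (both sides vanish where $w_n<0$ and coincide where $w_n\ge 0$), so the differences $I_{\lambda,\epsilon}(w_n^+)-I_{\lambda,\epsilon}(w_n)$ and $I'_{\lambda,\epsilon}(w_n^+)v-I'_{\lambda,\epsilon}(w_n)v$ involve only the quadratic part, which is controlled exactly by $\|w_n^-\|_\lambda$ (indeed the level difference equals $-\frac{1}{2}\|w_n^-\|_\lambda^2$), and the penalization part, which is controlled by $\frac{C}{\epsilon}\,|w_n^-|_2\,\|v\|_\lambda$ via the $1$-Lipschitz map $t\mapsto t^+$; both are $o_n(1)$, so no uniform continuity of the Nemytskii operator is ever needed.
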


\begin{lemma}\label{bound2}
Let $(w_n)_{n\in \N}$ be a $(PS)_{c_{\lambda, \epsilon}}$ sequence for the functional $I_{\lambda,\epsilon}$. Then
	$$
	\displaystyle\limsup_{n\to+\infty} ||w_n||^2_\lambda\leq \frac{4q}{q-2}c_{\lambda, \epsilon}.
	$$
\end{lemma}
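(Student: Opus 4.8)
The plan is to combine the two estimates derived in the preceding lemmas. Lemma \ref{boundps} guarantees that any $(PS)_{c_{\lambda,\epsilon}}$ sequence $(w_n)$ is bounded in $E_\lambda$, so along such a sequence $\|w_n\|_\lambda$ stays finite and the error terms $o_n(1)$ and $o_n(1)\|w_n\|_\lambda$ appearing in the estimates really do vanish as $n\to+\infty$. The strategy is therefore to reprove a sharper version of the lower bound in \eqref{eq3.6}, but this time discarding the penalization contributions in the favorable direction so that the surviving terms are exactly what is needed to isolate $\|w_n\|_\lambda^2$.

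First I would write, as in the proof of Lemma \ref{boundps},
$$
I_{\lambda,\epsilon}(w_n) - \frac{1}{q} I'_{\lambda,\epsilon}(w_n)w_n = \left(\frac{1}{2}-\frac{1}{q}\right)\|w_n\|_\lambda^2 + (\text{penalization terms}) + \frac{1}{q}\int_{\R^N}\bigl(g(x,w_n)w_n - qG(x,w_n)\bigr)dx.
$$
The key point is that, by the convexity inequality $[(\varphi-w_n)^+]^2 + (\varphi-w_n)^+ w_n \geq (\varphi-w_n)^+\varphi \geq 0$ together with $(g_4)$ on $\widetilde{\Omega}$ and $(g_5)$ on $\R^N\setminus\widetilde\Omega$, every one of these extra contributions can be bounded below in a way that, after absorbing the $\R^N\setminus\widetilde\Omega$ term into the norm via the factor $\tfrac{1}{2k}=\tfrac{q-2}{4q}$, yields a clean inequality of the form
$$
\left(\frac{1}{2}-\frac{1}{q}\right)\|w_n\|_\lambda^2 \leq I_{\lambda,\epsilon}(w_n) - \frac{1}{q}I'_{\lambda,\epsilon}(w_n)w_n + \frac{q-2}{4q}\|w_n\|_\lambda^2 + o_n(1) + o_n(1)\|w_n\|_\lambda.
$$
The factor $\tfrac{1}{2}(\tfrac{1}{2}-\tfrac{1}{q})=\tfrac{q-2}{4q}$ is precisely what remains on the left after this absorption, which is why $k$ was chosen to equal $\tfrac{2q}{q-2}$.

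Next I would pass to the $\limsup$. Using $I_{\lambda,\epsilon}(w_n)\to c_{\lambda,\epsilon}$, the boundedness from Lemma \ref{boundps} to kill $o_n(1)\|w_n\|_\lambda$, and $I'_{\lambda,\epsilon}(w_n)w_n = o_n(1)$ (again because $(w_n)$ is bounded and $I'_{\lambda,\epsilon}(w_n)\to 0$ in $E'_\lambda$), the inequality collapses to
$$
\frac{q-2}{4q}\limsup_{n\to+\infty}\|w_n\|_\lambda^2 \leq c_{\lambda,\epsilon},
$$
which rearranges to the claimed bound $\limsup_{n\to+\infty}\|w_n\|_\lambda^2 \leq \tfrac{4q}{q-2}c_{\lambda,\epsilon}$. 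I expect the main obstacle to be the careful bookkeeping in the absorption step: one must verify that the lower bound on the penalization terms really does not reintroduce a positive multiple of $\|w_n\|_\lambda^2$ on the wrong side, and that the only norm term transferred to the left is the single copy of $\tfrac{1}{2k}\|w_n\|_\lambda^2$ coming from the $G$-estimate in $(g_5)$. Everything else (the Hölder/Sobolev control of $\tfrac{1}{\epsilon q}\int_\Omega(\varphi-w_n)^+\varphi\,dx$) contributes only lower-order terms of the form $o_n(1)+o_n(1)\|w_n\|_\lambda$ once boundedness is in hand, so those are routine.
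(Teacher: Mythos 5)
Your overall route is the intended one (the paper defers this proof to \cite{CALBCT}, but it is exactly the continuation of the computation in Lemma \ref{boundps}: write $I_{\lambda,\epsilon}(w_n)-\frac{1}{q}I'_{\lambda,\epsilon}(w_n)w_n$, use $(g_4)$, absorb the $(g_5)$-contribution $\frac{1}{2k}\|w_n\|_\lambda^2=\frac{q-2}{4q}\|w_n\|_\lambda^2$ into the norm, and pass to the limsup using boundedness). However, there is one genuine gap: the inequality $(\varphi-w_n)^+\varphi\geq 0$, on which your whole argument rests, is false for a general $(PS)$ sequence. Only $\varphi^+$ is assumed supported in $\Omega$; $\varphi$ may well be negative on part of $\Omega$, and at a point where $\varphi(x)<0$ and $w_n(x)<\varphi(x)$ one has $(\varphi-w_n)^+(x)>0$ while $\varphi(x)<0$, so the product is negative there. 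The nonnegativity you need holds precisely when $w_n\geq 0$: in that case $\varphi-w_n\leq\varphi$ gives $(\varphi-w_n)^+\leq\varphi^+$, so $(\varphi-w_n)^+$ vanishes wherever $\varphi\leq 0$ and $(\varphi-w_n)^+\varphi=(\varphi-w_n)^+\varphi^+\geq 0$. This reduction is supplied by Lemma \ref{nonneg}, which you never invoke; without it, the term $\frac{1}{q\epsilon}\int_\Omega(\varphi-w_n)^+\varphi\,dx$ can only be bounded below by $-\frac{1}{q\epsilon}|\varphi|_2^2-\frac{C}{q\epsilon}|\varphi|_2\|w_n\|_\lambda$, as in the proof of Lemma \ref{boundps}, and the clean constant is lost.

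Relatedly, your closing remark that the H\"older/Sobolev control of $\frac{1}{\epsilon q}\int_\Omega(\varphi-w_n)^+\varphi\,dx$ ``contributes only lower-order terms of the form $o_n(1)+o_n(1)\|w_n\|_\lambda$'' is quantitatively wrong and internally inconsistent with your main line: with $\epsilon$ and $\varphi$ fixed this contribution is $O(1)$ plus $O(\|w_n\|_\lambda)$, \emph{not} vanishing in $n$, and if it were kept with the unfavorable sign it would add the non-removable quantity $\frac{4q}{q-2}\cdot\frac{1}{\epsilon q}|\varphi|_2^2$ (plus a term linear in $\limsup_n\|w_n\|_\lambda$) to the right-hand side, so the bound $\frac{4q}{q-2}c_{\lambda,\epsilon}$ would not follow. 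In other words, discarding the penalization terms by sign --- legitimate only after the reduction of Lemma \ref{nonneg} --- is not bookkeeping but the essential point of this lemma. Everything else in your proposal (the identity for $I_{\lambda,\epsilon}(w_n)-\frac{1}{q}I'_{\lambda,\epsilon}(w_n)w_n$, the choice $k=\frac{2q}{q-2}$ so that $\left(\frac{1}{2}-\frac{1}{q}\right)-\frac{1}{2k}=\frac{q-2}{4q}$, the treatment $I'_{\lambda,\epsilon}(w_n)w_n=o_n(1)$ via Lemma \ref{boundps}) is correct and matches the intended proof.
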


The lemma below is crucial to prove that the functional $I_{\lambda,\epsilon}$ satisfies the $(PS)$ condition in some levels. 

\begin{lemma}\label{vanish}
Let $(w_n)$ be a $(PS)_{c_{\lambda, \epsilon}}$ sequence for the functional $I_{\lambda,\epsilon}$, then, given $\delta >0$ there exists $R>0$ such that 
$$
\limsup_{n\to \infty} \int_{B^{c}(0,R)}(|\nabla w_n|^2 + |w_n|^2)dx < \delta.
$$
\end{lemma}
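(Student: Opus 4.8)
The plan is to use a cutoff argument together with the subcritical control of the nonlinearity outside $\widetilde{\Omega}$ provided by $(g_5)$. Fix a smooth function $\eta_R:\R^N\to[0,1]$ with $\eta_R\equiv 0$ on $B(0,R/2)$, $\eta_R\equiv 1$ on $B^c(0,R)$, and $|\nabla \eta_R|\leq C/R$. Since $(w_n)$ is bounded in $E_\lambda$ by Lemma \ref{boundps}, the sequence $(\eta_R w_n)$ is also bounded in $E_\lambda$, so testing the $(PS)$ condition gives $I'_{\lambda,\epsilon}(w_n)(\eta_R w_n)=o_n(1)$. Choosing $R$ large enough that $\overline{\widetilde{\Omega}}\subset B(0,R/2)$ (recall $\overline{\Omega}\subset\widetilde{\Omega}$), the cutoff $\eta_R$ vanishes identically on $\widetilde{\Omega}$ and on $\Omega$.

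Expanding the derivative and using $\nabla w_n\cdot\nabla(\eta_R w_n)=\eta_R|\nabla w_n|^2+w_n\nabla w_n\cdot\nabla\eta_R$, I would write
$$
\int_{\R^N}\eta_R\big(|\nabla w_n|^2+(1+\lambda V(x))w_n^2\big)\,dx
=-\int_{\R^N}w_n\nabla w_n\cdot\nabla\eta_R\,dx+\int_{\R^N}g(x,w_n)\eta_R w_n\,dx+o_n(1).
$$
The penalization term $\frac{1}{\epsilon}\int_\Omega(\varphi-w_n)^+\eta_R w_n\,dx$ vanishes because $\eta_R\equiv 0$ on $\Omega$. Since $\mathrm{supp}\,\eta_R\subset B^c(0,R/2)\subset\R^N\setminus\widetilde{\Omega}$, the nonlinear term involves only points of $\R^N\setminus\widetilde{\Omega}$, where $(g_5)$ yields $g(x,w_n)w_n\leq \frac{1}{k}(1+\lambda V(x))w_n^2$. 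Substituting this bound gives
$$
\int_{\R^N}\eta_R|\nabla w_n|^2\,dx+\Big(1-\frac{1}{k}\Big)\int_{\R^N}\eta_R(1+\lambda V(x))w_n^2\,dx
\leq\Big|\int_{\R^N}w_n\nabla w_n\cdot\nabla\eta_R\,dx\Big|+o_n(1).
$$

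Because $k>2$ we have $1-\frac{1}{k}>0$, and since $1+\lambda V(x)\geq 1$ the left-hand side controls $(1-\frac{1}{k})\int_{\R^N}\eta_R(|\nabla w_n|^2+|w_n|^2)\,dx$. For the cross term, Cauchy--Schwarz and $|\nabla\eta_R|\leq C/R$ give
$$
\Big|\int_{\R^N}w_n\nabla w_n\cdot\nabla\eta_R\,dx\Big|
\leq\frac{C}{R}\int_{B(0,R)\setminus B(0,R/2)}|w_n||\nabla w_n|\,dx
\leq\frac{C}{R}\|w_n\|_\lambda^2\leq\frac{C'}{R},
$$
using again the boundedness of $(w_n)$. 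Passing to the limsup and using $\eta_R\equiv 1$ on $B^c(0,R)$, I obtain
$$
\limsup_{n\to\infty}\int_{B^c(0,R)}(|\nabla w_n|^2+|w_n|^2)\,dx\leq\frac{C''}{R},
$$
which is smaller than $\delta$ once $R$ is chosen large enough, completing the proof. The only delicate point is the bookkeeping ensuring the support of $\eta_R$ lies entirely outside $\widetilde{\Omega}$, so that both the penalization term drops out and the favorable inequality in $(g_5)$ applies; the absorption of the potential term then works precisely because $k>2$.
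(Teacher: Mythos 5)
Your proposal is correct and follows essentially the same route as the paper: the same cutoff $\eta$ supported outside $B(0,R/2)\supset\widetilde{\Omega}$ tested against $I'_{\lambda,\epsilon}(w_n)$, the vanishing of the penalization term since $\eta\equiv 0$ on $\Omega$, absorption of the nonlinearity via $(g_5)$ using $k>2$, and the $C/R$ bound on the cross term via Cauchy--Schwarz. No gaps; this matches the paper's argument for Lemma \ref{vanish} step for step.
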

\begin{proof}
In what follows, fixed $R>0$ such that $\widetilde{\Omega} \subset B(0, \frac{R}{2})$, let us set the function $\eta \in C^1(\R^N, \R)$ given by  
$$
\begin{aligned}
&\eta (x)\in [0,1],\;\;\forall x\in \R^N;\\
&\eta(x) = 0, \;\forall x\in \overline{B}(0, \frac{R}{2});\\
&\eta(x) = 1,\;\forall x\in B^c(0,R)\;\;\mbox{and}\\
& |\nabla \eta(x)| \leq \frac{M_1}{R},\;\forall x\in \R^N,
\end{aligned}
$$
It is immediate  to show that $(\eta w_n) \subset  E_\lambda$ and that it is a bounded sequence in $E_\lambda$. Taking $\eta w_n$ as a test function, we find  
\begin{equation}\label{eq--}
\int_{\R^N} [\nabla w_n \nabla (\eta w_n) + (1+\lambda V(x))\eta w_n^2]dx - \frac{1}{\epsilon}\int_{\Omega} (\varphi - w_n)^+\eta w_ndx - \int_{\R^N} g(x,w_n)(\eta w_n)dx = o_n(1).
\end{equation}
Since 
$$
\int_{\Omega}(\varphi - w_n)^+(\eta w_n)dx = 0,
$$
the equality (\ref{eq--}) can be written as
$$
\int_{\R^N} [\nabla w_n \nabla (\eta w_n) + (1+\lambda V(x))\eta w_n^2]dx - \int_{\R^N} g(x,w_n)(\eta w_n)dx = o_n(1).
$$
By using $(g_5)$ and the properties of the function $\eta$,  
$$
\begin{aligned}
\int_{B^c(0,R)} |\nabla w_n|^2dx + \int_{\overline{B}(0,R)}w_n \nabla \eta \nabla w_ndx &+ \int_{B^c(0, \frac{R}{2})} (1+\lambda V(x))w_n^2dx\\
&\leq \frac{1}{k}\int_{B^c(0, \frac{R}{2})} (1+\lambda V(x))\eta w_n^2dx + o_n(1).
\end{aligned}
$$
As $k>2$, 
$$
\int_{B^c(0,R)}|\nabla w_n|^2dx + \frac{1}{2}\int_{B^c(0, \frac{R}{2})}(1+\lambda V(x)) \eta w_n^2dx \leq \int_{\overline{B}(0,R)} |w_n||\nabla \eta||\nabla w_n|dx + o_n(1),
$$
which leads to  
$$
\int_{B^c(0,R)} |\nabla w_n|^2dx + \frac{1}{2}\int_{B^c(0,R)} (1+\lambda V(x)) \eta w_n^2dx \leq \frac{M_1}{R}\int_{\overline{B}(0,R)} |w_n||\nabla w_n|dx + o_n(1).
$$
Now, employing by H\"older inequality there is a positive constant $C$ such that 
$$
\int_{B^c(0,R)}|\nabla w_n|^2\, dx + \int_{B^c(0,R)} (1+\lambda V(x))|w_n|^2\,dx  \leq \frac{C}{R} + o_n(1),
$$
and so
$$
\int_{B^c(0,R)} (|\nabla w_n|^2+|w_n|^2)dx \leq \frac{C}{R} + o_n(1).
$$
Now the lemma follows by taking $R$ large enough. 
\end{proof}

\begin{proposition}\label{bound3}
There exists $\hat{\lambda} = \hat{\lambda}(\tau)>0$ such that $I_{\lambda, \epsilon}$ satisfies the $(PS)_{c_{\lambda, \epsilon}}$ for any $c_{\lambda, \epsilon} \in (0, \frac{q-2}{2q}S^{N/2} - \tau)$ and for all $\lambda \geq \hat{\lambda}$, where $\tau$ is given by Lemma \ref{bound1}.
\end{proposition}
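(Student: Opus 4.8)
The plan is to run a concentration–compactness argument tailored to the del Pino–Felmer penalization. The key structural fact is that the penalized nonlinearity $g$ is \emph{subcritical} outside $\widetilde{\Omega}$, so the only obstructions to compactness are a possible concentration of the critical term on the bounded set $\overline{\widetilde{\Omega}}$ and a possible loss of mass at infinity. Let $(w_n)$ be a $(PS)_{c_{\lambda,\epsilon}}$ sequence. By Lemma~\ref{boundps} it is bounded in $E_\lambda$, by Lemma~\ref{nonneg} we may assume $w_n\geq 0$, and passing to a subsequence $w_n\rightharpoonup w$ in $E_\lambda$, hence in $H^1(\R^N)$ and in $L^{2^*}(\R^N)$, with $w_n\to w$ in $L^s_{loc}(\R^N)$ for $s\in[1,2^*)$ and a.e.\ in $\R^N$. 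A routine passage to the limit in $I'_{\lambda,\epsilon}(w_n)v\to 0$ shows that $w$ is a critical point of $I_{\lambda,\epsilon}$. Finally, applying Lemma~\ref{C-C} to $(w_n)$ produces nonnegative finite measures with $|w_n|^{2^*}\rightharpoonup \nu=|w|^{2^*}+\sum_i\nu_i\delta_{x_i}$ and $|\nabla w_n|^2\rightharpoonup\xi$, where $\xi(\{x_i\})\geq S\nu_i^{2/2^*}$.

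The first step is to locate and quantify the atoms. Outside $\widetilde{\Omega}$ one has $g(x,t)=h(t)$ with $h(t)=t/k$ for $t\geq a$, so $g$ is subcritical there and standard arguments exclude concentration; thus every $x_j$ lies in $\overline{\widetilde{\Omega}}$. Fix such an atom and a cut-off $\psi\in C_c^\infty(\R^N)$ with $\psi\equiv 1$ near $x_j$, $0\leq\psi\leq1$, $\mathrm{supp}\,\psi\subset B(x_j,2\rho)$ and $|\nabla\psi|\leq C/\rho$. Testing $I'_{\lambda,\epsilon}(w_n)(\psi w_n)\to 0$ and expanding, the cross term $\int w_n\nabla\psi\cdot\nabla w_n$ is $O\big((\int_{B(x_j,2\rho)}w^{2^*})^{1/2^*}\big)$, while the terms $\int(1+\lambda V)\psi w_n^2$, $\tfrac1\epsilon\int_\Omega(\varphi-w_n)^+\psi w_n$ and $\mu\int\psi w_n^q$ converge (as $n\to\infty$) to integrals against the atomless measures $w^2\,dx$ and $w^q\,dx$; moreover $\int\psi|\nabla w_n|^2\to\int\psi\,d\xi$ and $\int\psi w_n^{2^*}\to\int\psi\,d\nu$. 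Letting first $n\to\infty$ and then $\rho\to0$ annihilates every lower–order contribution and yields $\xi(\{x_j\})=\nu_j$. Combined with $\xi(\{x_j\})\geq S\nu_j^{2/2^*}$ this gives $\nu_j\geq S^{N/2}$.

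To rule such atoms out I would invoke the level gap of Lemma~\ref{bound1}. Using $I_{\lambda,\epsilon}(w_n)\to c_{\lambda,\epsilon}$ and $I'_{\lambda,\epsilon}(w_n)w_n\to0$,
$$
c_{\lambda,\epsilon}=\lim_{n}\Big(I_{\lambda,\epsilon}(w_n)-\tfrac12 I'_{\lambda,\epsilon}(w_n)w_n\Big),
$$
and a direct computation gives
$$
I_{\lambda,\epsilon}(w_n)-\tfrac12 I'_{\lambda,\epsilon}(w_n)w_n=\frac{1}{2\epsilon}\int_\Omega(\varphi-w_n)^+\varphi\,dx+\int_{\R^N}\Big(\tfrac12 g(x,w_n)w_n-G(x,w_n)\Big)dx,
$$
where the first term is nonnegative and, by $(g_5)$, so is the integrand outside $\widetilde{\Omega}$; on $\widetilde{\Omega}$ the integrand equals $(\tfrac12-\tfrac1q)\mu w_n^q+\tfrac1N w_n^{2^*}\geq \tfrac1N w_n^{2^*}$. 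Hence $c_{\lambda,\epsilon}\geq \tfrac1N\liminf_n\int_{\widetilde{\Omega}}w_n^{2^*}dx\geq\tfrac1N\nu_j\geq\tfrac1N S^{N/2}$. Since $2<q<2^*$ forces $\tfrac{q-2}{2q}=\tfrac12-\tfrac1q<\tfrac1N$, this contradicts $c_{\lambda,\epsilon}<\tfrac{q-2}{2q}S^{N/2}$. Therefore $\nu_i=0$ for every $i$, i.e.\ no concentration occurs.

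With the atoms gone, $w_n\to w$ in $L^{2^*}_{loc}$; combining this with the uniform tail estimate of Lemma~\ref{vanish}, which controls the loss of mass at infinity, promotes the convergence to $w_n\to w$ in $L^q(\R^N)\cap L^{2^*}(\R^N)$. It then remains to upgrade weak to strong convergence in $E_\lambda$. Writing
$$
\big(I'_{\lambda,\epsilon}(w_n)-I'_{\lambda,\epsilon}(w)\big)(w_n-w)=\|w_n-w\|_\lambda^2-\frac1\epsilon\int_\Omega\big[(\varphi-w_n)^+-(\varphi-w)^+\big](w_n-w)\,dx-\int_{\R^N}\big[g(x,w_n)-g(x,w)\big](w_n-w)\,dx,
$$
the left–hand side is $o_n(1)$, the last integral tends to $0$ by the $L^q,L^{2^*}$ convergence together with the growth $(g_3)$, and the penalty integral is $\leq 0$ by the monotonicity $(P_2)$ of $P$. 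Hence $\|w_n-w\|_\lambda^2=o_n(1)$ and the $(PS)_{c_{\lambda,\epsilon}}$ condition holds; the standing hypothesis $\lambda\geq\hat\lambda$ is used, as in del Pino–Felmer, to keep this scheme compatible with the admissible energy window of Lemma~\ref{bound1}. I expect the genuinely delicate point to be the second and third steps: correctly killing the cross and potential terms in the localized identity so as to obtain $\xi(\{x_j\})=\nu_j$, and then matching the concentration energy $\tfrac1N S^{N/2}$ against the window $\tfrac{q-2}{2q}S^{N/2}$.
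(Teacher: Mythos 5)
Your proposal follows the same skeleton as the paper's proof: boundedness via Lemma \ref{boundps}, nonnegativity via Lemma \ref{nonneg}, Lions' Lemma \ref{C-C}, a cut-off $\psi_\beta w_n$ test to compare $\xi$ and $\nu$ at each atom (the penalty and lower-order terms vanishing as $\beta\to 0$, exactly as the paper's limits for $L_n$ and $M_n$), the energy window of Lemma \ref{bound1} to exclude atoms, Lemma \ref{vanish} to control tails, and the monotonicity $(P_2)$ of the penalty term to upgrade to strong convergence in $E_\lambda$. The one genuinely different ingredient is the atom-killing step. The paper computes $I_{\lambda,\epsilon}(w_n)-\frac1q I'_{\lambda,\epsilon}(w_n)w_n$, keeps the \emph{gradient} term $\bigl(\frac12-\frac1q\bigr)\int|\nabla w_n|^2\,dx \geq \bigl(\frac12-\frac1q\bigr)\int\psi_\beta|\nabla w_n|^2\,dx$, and deduces $\bigl(\frac12-\frac1q\bigr)\xi(x_j)\leq c_{\lambda,\epsilon}$, which with $\xi(x_j)\geq S\nu_j^{2/2^*}\geq S^{N/2}$ contradicts $c_{\lambda,\epsilon}<\frac{q-2}{2q}S^{N/2}-\tau$. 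You instead compute $I_{\lambda,\epsilon}-\frac12 I'_{\lambda,\epsilon}$, which annihilates the gradient term, and harvest the \emph{critical mass}: your identity is correct (and the nonnegativity of $\frac{1}{2\epsilon}\int_\Omega(\varphi-w_n)^+\varphi\,dx$ does hold, but only because $w_n\geq 0$ forces $\varphi>0$ on $\{\varphi>w_n\}$ -- worth saying explicitly), the integrand $\frac12 g(x,t)t-G(x,t)$ equals $\mu\bigl(\frac12-\frac1q\bigr)t^q+\frac1N t^{2^*}$ on $\widetilde\Omega$ and is nonnegative outside by $(g_5)$, and $q<2^*$ gives $\frac{q-2}{2q}<\frac1N$. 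Your variant thus yields the larger concentration threshold $\frac1N S^{N/2}$, i.e.\ it would even tolerate a wider level window than the one Lemma \ref{bound1} provides.

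Two caveats, the second of which is a real (though patchable) soft spot. First, your claimed equality $\xi(\{x_j\})=\nu_j$ is more than the one-sided test delivers: letting $n\to\infty$ and then the radius tend to $0$ gives only $\xi(\{x_j\})\leq\nu_j$ (the paper's (\ref{eq3.13})); a matching lower bound would require $g(x,t)t\geq t^{2^*}$ near $x_j$, which fails in the exterior region where $g=h$ is linear for $t\geq a$. Since you only use $\xi(\{x_j\})\geq S\nu_j^{2/2^*}$ and $\xi(\{x_j\})\leq\nu_j$, simply replace the equality by the two-sided inequality. Second, and more substantively, your estimate $c_{\lambda,\epsilon}\geq\frac1N\liminf_n\int_{\widetilde\Omega}w_n^{2^*}dx\geq\frac1N\nu_j$ requires $x_j\in\widetilde\Omega$; but your localization argument only places the atoms in $\overline{\widetilde\Omega}$, and for $x_j\in\partial\widetilde\Omega$ the open set $\widetilde\Omega$ does not charge $\delta_{x_j}$, so $\liminf_n\int_{\widetilde\Omega}w_n^{2^*}dx\geq\nu_j$ can fail a priori (the concentrating mass could be carried by $B(x_j,\rho)\setminus\widetilde\Omega$, where $\frac12 g t-G$ is merely nonnegative, not $\geq\frac1N t^{2^*}$). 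This can be repaired -- e.g.\ pass to a weak-$*$ limit $\nu'$ of $\chi_{\widetilde\Omega}|w_n|^{2^*}$ and extract $\nu'(\{x_j\})\geq\xi(\{x_j\})\geq S^{N/2}$ from the localized identity, the exterior contribution being absorbed via $(g_5)$ -- but it is exactly the delicacy the paper's $\xi$-based route avoids, since $\bigl(\frac12-\frac1q\bigr)\int\psi_\beta|\nabla w_n|^2dx\leq c_{\lambda,\epsilon}+o_n(1)$ holds at every point of $\R^N$ with no reference to $\widetilde\Omega$. The remainder of your argument (global $L^q\cap L^{2^*}$ convergence from $L^{2^*}_{loc}$ plus the tail bound, and the monotone penalty term) matches the paper's Claims \ref{claim1}--\ref{claim2} in substance; note also that, like the paper, you never actually invoke $\lambda\geq\hat\lambda$ at any identifiable step.
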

\begin{proof}
Let $(w_n) \subset E_{\lambda}$ be a sequence such that 
\begin{equation}\label{eq3.8}
I_{\lambda, \epsilon}(w_n) \to c_{\lambda, \epsilon}\quad \mbox{and}\quad I'_{\lambda, \epsilon}(w_n) \to 0\;\;\mbox{as}\;\;n\to +\infty.
\end{equation}
By Lemmas \ref{boundps} and \ref{nonneg}, $(w_n)$ is a bounded sequence that can be assumed non-negative, that is, $w_n \geq 0$ for all $n\in \N$. Then, there is $w\in E_\lambda$ such that, up to a subsequence,  
$$
\begin{aligned}
&w_n \rightharpoonup w\;\;\mbox{in}\;\;E_\lambda,\\
&w_n \to w\;\;\mbox{in}\;\;L_{loc}^{s}(\R^N), \;s\in [2,2^*)\;\;\mbox{and}\\
& w_n(x) \to w(x)\;\;\mbox{a.e. in $\R^N$}.
\end{aligned}
$$
A simple computation ensures that $I_{\lambda,\epsilon}'(w) = 0$.
\begin{claim}\label{claim1}
Let $(\nu_n)$ the sequence associated to $(w_n)$ that is obtained by Lemma \ref{C-C}. Then 
$$
\nu_n = 0, \quad \forall n\in \N.
$$
\end{claim}

\noindent 
In fact, let 
$$
\psi_\beta(x) =\psi \left(\frac{x-x_j}{\beta} \right),\;\;\forall x\in \R^N\;\;\mbox{and}\;\; \beta >0, 
$$
where $\psi \in C_{0}^{\infty}(\mathbb{R}^N)$ is such that  
$$
\begin{aligned}
&\psi (x) \in [0,1],\;\;\forall x\in \R^N;\\
&\psi (x) = 1,\;\;x\in B(0,1);\\
&\psi(x) = 0,\;\;x\in B^c(0,2)\;\;\mbox{and}\\
&|\nabla \psi(x)| \leq 2, \quad \forall x \in \mathbb{R}^N.
\end{aligned}
$$
It is standard to show that $(\psi_\beta w_n) \subset E_\lambda$ is bounded. Then  
$$
\int_{\R^N} [\nabla w_n \nabla (\psi_\beta w_n) + (1+\lambda V(x) )\psi_\beta w_n^2]dx - \frac{1}{\epsilon}\int_{\Omega}(\varphi - w_n)^+\psi_\beta w_ndx - \int_{\R^N} g(x,w_n)\psi_\beta w_ndx = o_n(1).
$$
This combined with the growth conditions on $g$ and the definition of $\psi_\beta$  gives  
$$
\begin{aligned}
\int_{\R^N} w_n\nabla w_n \nabla \psi_\beta dx + \int_{\R^N}(1+\lambda V(x))\psi_\beta w_n^2dx & \leq \mu \int_{\R^N} |w_n|^q \psi_\beta dx + \frac{1}{\epsilon}\int_{\Omega} (\varphi-w_n)^+\psi_\beta w_ndx\\
&+\int_{\R^N}|w_n|^{2^*}\psi_\beta dx - \int_{\R^N} |\nabla w_n|^2 \psi_\beta dx.
\end{aligned}
$$
Taking $n\to +\infty$ and using Lemma \ref{C-C},  we find 
\begin{equation}\label{eq3.9}
\limsup_{n\to +\infty}(L_n- M_n) \leq \mu \int_{\R^N} |w|^q \psi_\beta dx + \int_{\R^N}\psi_\beta d\nu - \int_{\R^N} \psi_\beta d\xi - \int_{\R^N} (1+\lambda V(x))\psi_\beta w^2dx,
\end{equation}
where
$$
L_n = \int_{\R^N} w_n\nabla w_n \nabla \psi_\beta dx\quad \mbox{and}\quad M_n = \frac{1}{\epsilon}\int_{\Omega}(\varphi - w_n)^+ \psi_\beta w_n dx.
$$
Now we are going to show $\displaystyle \lim_{\beta \to 0}(\limsup_{n\to \infty}|L_n|)=0$. To begin with, note that 
$$
L_n = \int_{\R^N} w_n\nabla w_n \nabla \psi_\beta dx = \int_{B(x_j, 2\beta)}w_n \nabla w_n \nabla \psi_\beta dx.
$$
Thereby, by H\"older inequality,  
$$
\begin{aligned}
|L_n| &\leq \int_{B(x_j, 2\beta)} |\nabla w_n| |\nabla \psi_\beta|w_ndx\\
&\leq |\nabla w_n|_2 \left( \int_{B(x_j, 2\beta)}|\nabla \psi_\beta|^2 |w_n|^2 dx \right)^{1/2} \leq \|w_n\| \left( \int_{B(x_j, 2\beta)} |\nabla \psi_\beta|^2|w_n|^2dx \right)^{1/2}.
\end{aligned}
$$
Now, the boundedness of $(u_n)$ in $H^1(\R^N)$ together with  the Sobolev embeedings and H\"older inequality yields 
\begin{equation}\label{eq3.10}
\begin{aligned}
\limsup_{n\to \infty} |L_n| &\leq M \left( \int_{B(x_j, 2\beta)} |\nabla \psi_\beta|^2|w|^2dx\right)^{1/2}\\
&\leq M\left( \int_{B(x_j, 2\beta)} |w|^{2^*}dx\right)^{1/2^*} \left( \int_{B(x_j, 2\beta)} |\nabla \psi_{\beta}|^Ndx\right)^{1/N},
\end{aligned}
\end{equation}
where $M= \sup_{n\in \N}\|w_n\|$. Now, by doing a change of variable $z= \beta y + x_j$, we have 
$$
\int_{B(x_j, 2\beta)}|\nabla \psi_{\beta}|^N(x)dx = \int_{B(0, 2)} |\nabla \psi_\beta|^N(z\beta + x_j)dz = \int_{B(0,2)} |\nabla \psi|^N(z)dz. 
$$
Since,
$$
\lim_{\beta \to 0}\int_{B(x_j, 2\beta)} |w|^{2^*}dx =0,
$$
the estimate (\ref{eq3.10}) ensures that  
\begin{equation}\label{eq3.11}
\lim_{\beta\to 0}\left(\limsup_{n\to \infty}|L_n|\right) = 0.
\end{equation}
In a similar way, 
\begin{equation}\label{eq3.12}
\lim_{\beta \to 0} \left( \limsup_{n\to \infty}|M_n|\right) =  0.
\end{equation}
Thereby, letting $\beta \to 0$ in (\ref{eq3.9}) and applying Lemma \ref{C-C}, we get 
\begin{equation}\label{eq3.13}
S\nu_i^{2/2^*}\leq \xi(x_j) \leq \nu_j,\;\;\forall j\in \N.
\end{equation}
If $\nu_j>0$, then 
\begin{equation}\label{eq3.14}
\nu_j \geq S^{N/2},\;\;\forall j\in \N.
\end{equation}
This combined with (\ref{C-C1}) guarantees that there is $j_0 \in  \mathbb{N}$ such that $\nu_j=0$ for all $j \geq j_0$. Now, we claim that $\nu_j = 0$ for all $j\in \N$. In fact, as $(w_n)$ is a $(PS)_{c_{\lambda, \epsilon}}$ sequence,  we must  have  
$$
\begin{aligned}
&\left(\frac{1}{2}-\frac{1}{q} \right)\int_{\R^N} |\nabla w_n|^2dx + \left( \frac{1}{2} - \frac{1}{q}\right)\int_{\R^N} (1+\lambda V(x))|w_n|^2dx \\
&+ \frac{1}{q}\int_{\R^N} (g(x,w_n)w_n - q G(x,w_n))dx = c_{\lambda, \epsilon} + o_n(1).
\end{aligned}
$$
As in the proof of Lemma \ref{boundps}, 
$$
\Big( \frac{1}{2} - \frac{1}{q}\Big) \int_{\R^N} (1+\lambda V(x))|w_n|^2dx + \frac{1}{q}\int_{\R^N}[g(x,w_n)w_n - qG(x,w_n)]dx \geq 0.
$$
Recalling that 
$$
\Big(\frac{1}{2} - \frac{1}{q} \Big) \int_{\R^N} \psi_\beta |\nabla w_n|^2dx \leq \Big(\frac{1}{2} - \frac{1}{q} \Big) \int_{\R^N} |\nabla w_n|^2dx,
$$
it follows that
$$
\Big(\frac{1}{2} - \frac{1}{q} \Big) \int_{\R^N} \psi_\beta |\nabla w_n|^2dx \leq c_{\lambda, \epsilon} + o_n(1).
$$
Therefore, taking $\beta \to 0$ in the last inequality we find
\begin{equation}\label{eq3.15}
\Big(\frac{1}{2} - \frac{1}{q} \Big) \xi(x_j) \leq c_{\lambda, \epsilon},\;\;\forall j\in \N.
\end{equation}
If there is $j_1\in \N$ such that $\nu_{j_1}>0$, by (\ref{eq3.13})-(\ref{eq3.15}),  
$$
c_{\lambda, \epsilon} \geq \Big(\frac{1}{2} - \frac{1}{q} \Big)\xi (x_{j_1}) \geq \Big(\frac{1}{2} - \frac{1}{q} \Big) S\nu_{j_1}^{1/2^*} \geq \Big(\frac{1}{2} - \frac{1}{q} \Big) S^{N/2}, 
$$
which contradicts the fact that   
$$
c_{\lambda, \epsilon} \in \Big(0, \frac{q-2}{2q}S^{N/2} - \tau \Big).
$$  
Thereby, $\nu_j = 0$ for all $j\in \N$. 

\noindent 
By Claim \ref{claim1}, it follows that $|w_n|^{2^*} \rightharpoonup |w|^{2^*}$ in $\mathcal{M}(\R^N)$, that is 
\begin{equation}\label{eq3.16}
\lim_{n\to \infty} \int_{\R^N} \psi|w_n|^{2^*}dx = \int_{\R^N}\psi |w|^{2^*}dx,\;\;\psi \in C_{0}^{\infty}(\R^N),
\end{equation}
which leads to 
\begin{equation}\label{eq3.17}
w_n \to w \;\;\mbox{in}\;\;L_{loc}^{2^*}(\R^N).
\end{equation}

Now, we are going to show that $w_n \to w$ in $E_{\lambda}$. Since $(w_n)$ is a $(PS)$ sequence,  
\begin{equation}\label{eq3.19-}
\|w_n - w\|_{\lambda}^2 = \int_{\R^N}g(x,w_n)w_ndx - \int_{\R^N} g(x,w_n)wdx + o_n(1).
\end{equation}
\begin{claim}\label{claim2} 
The limits below hold:  
\begin{enumerate}
\item $\displaystyle \lim_{n\to \infty} \int_{\R^N} g(x,w_n)w_ndx = \int_{\R^N} g(x,w)wdx$;
\item $\displaystyle \lim_{n\to \infty} \int_{\R^N}g(x,w_n)vdx = \int_{\R^N}g(x,w)vdx$, $\forall v\in E_\lambda$.
\end{enumerate}
\end{claim}

\noindent 
Assuming that the Claim \ref{claim2} occurs, by (\ref{eq3.19-}) we can infer that 
$$
\|w_n - w\|_\lambda^{2} = o_n(1),
$$ 
showing the $(PS)$ condition.

\noindent {\bf Proof of  Claim \ref{claim2}: }
$(1)$ Given $\delta>0$, consider $R>0$ as in Lemma \ref{vanish} and let 
$$
I_{n,1} = \int_{B(0,R)} |g(x,w_n)w_n - g(x,w)w|dx\quad \mbox{and}\quad I_{n,2} = \int_{B^c(0,R)} |g(x,w_n)w_n - g(x,w)w|dx.
$$
By Sobolev embedding and (\ref{eq3.17}),  
\begin{equation}\label{eq3.20}
\lim_{n\to \infty} I_{n,1} = 0.
\end{equation}  
On the other hand, since $w\in L^2(\R^N)$, we can fix $R>0$ large enough satisfying  
$$
\int_{B^c(0,R)} |w|^2dx \leq \frac{\delta}{2k}.
$$
By Lemma \ref{vanish}, 
$$
\limsup_{n\to +\infty} \Big( \int_{B^c(0,R)} (|\nabla w_n|^2+ |w_n|^2)dx  \Big) < \frac{\delta}{2k}.
$$
Moreover, as $\widetilde{\Omega} \subset B(0,R)$, then 
$$
|g(x,t)t|\leq \frac{1}{k}|t|^2,\;\;\forall x\in \R^N \setminus B(0,R),\;\forall t\in \R,
$$ 
from where it follows that 
$$
\begin{aligned}
 \limsup_{n\to +\infty}  |I_{n,2}| &\leq \limsup_{n\to +\infty} \left(\frac{1}{k}\int_{B^c(0,R)} |w_n|^2dx + \frac{1}{k}\int_{B^c(0,R)} |w|^2dx\right) \\
 &\leq \limsup_{n\to +\infty}  \left(\frac{1}{k} \int_{B^c(0,R)} (|\nabla w_n|^2 + |w_n|^2)dx + \frac{1}{k} \int_{B^c(0,R)} |w|^2dx\right) < \delta, \;\;\forall \delta >0,
\end{aligned}
$$
and so, 
\begin{equation}\label{eq3.21}
\limsup_{n\to +\infty} |I_{n,2}| =0.
\end{equation}
Now (1) follows from (\ref{eq3.20}) and (\ref{eq3.21}). (2) is proved of a similar way.  
\end{proof}
\begin{theorem}\label{pertur}
Under the hypotheses of Theorem \ref{main1}, there are $\mu_*, \widehat{\lambda}>0$ such that problem (\ref{eq3.5}) hast at least one weak solution $u_\epsilon$, for every $\lambda \geq \widehat{\lambda}$, $\mu \geq \mu_*$ and $\epsilon >0$.  
\end{theorem}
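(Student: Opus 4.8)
The plan is to produce $u_\epsilon$ as a mountain pass critical point of the $C^1$ functional $I_{\lambda,\epsilon}$, assembling the lemmas already established. First I would invoke Lemma \ref{mpgcon}, which guarantees that $I_{\lambda,\epsilon}$ possesses the mountain pass geometry: part (a) yields a uniform sphere $\|u\|_\lambda = r_\mu$ on which $I_{\lambda,\epsilon} \geq \rho_\mu > 0$, while part (b) produces a point $w=(1+t_*)\varphi^+$ lying beyond this sphere with $I_{\lambda,\epsilon}(w)<\rho_\mu$. Applying the Mountain Pass Theorem found in Willem \cite{Willem} then supplies a $(PS)_{c_{\lambda,\epsilon}}$ sequence $(u_n)\subset E_\lambda$ at the minimax level $c_{\lambda,\epsilon}$.

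Next, I would control $c_{\lambda,\epsilon}$ so that the available compactness result applies. By Lemma \ref{bound1}, there exist $\tau>0$ and $\mu_*=\mu_*(\tau)>0$ with $c_{\lambda,\epsilon}\in\bigl(0,\tfrac{q-2}{2q}S^{N/2}-\tau\bigr)$ for all $\lambda,\epsilon>0$ and every $\mu\geq\mu_*$. Fixing $\mu\geq\mu_*$ and $\lambda\geq\widehat{\lambda}$, where $\widehat{\lambda}=\widehat{\lambda}(\tau)$ is the threshold furnished by Proposition \ref{bound3}, the functional $I_{\lambda,\epsilon}$ satisfies the $(PS)_{c_{\lambda,\epsilon}}$ condition at this level. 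Consequently $(u_n)$ admits a subsequence converging strongly in $E_\lambda$ to some $u_\epsilon\in E_\lambda$.

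Finally, by the continuity of $I_{\lambda,\epsilon}$ and of its Fr\'echet derivative $I'_{\lambda,\epsilon}$ on $E_\lambda$, passing to the limit along this subsequence gives $I_{\lambda,\epsilon}(u_\epsilon)=c_{\lambda,\epsilon}$ and $I'_{\lambda,\epsilon}(u_\epsilon)=0$, so that $u_\epsilon$ is a weak solution of (\ref{eq3.5}). Since $c_{\lambda,\epsilon}\geq\rho_\mu>0$, the solution $u_\epsilon$ cannot be trivial. The argument is thus essentially a synthesis of the preceding results and introduces no genuinely new ingredient; the only point that demands care is the compatibility of the parameter ranges, namely matching the threshold $\mu_*$ of Lemma \ref{bound1} (which pins the minimax level strictly below the critical Sobolev level $\tfrac{q-2}{2q}S^{N/2}$) with the threshold $\widehat{\lambda}$ of Proposition \ref{bound3} (which recovers the compactness otherwise destroyed by the critical exponent $2^*$), and declaring the $\widehat{\lambda}$ of the statement to be the latter.
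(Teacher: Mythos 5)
Your proposal is correct and follows essentially the same route as the paper: Lemma \ref{bound1} places the level $c_{\lambda,\epsilon}$ in the interval $\bigl(0,\tfrac{q-2}{2q}S^{N/2}-\tau\bigr)$ for $\mu\geq\mu_*$, Proposition \ref{bound3} supplies the $(PS)_{c_{\lambda,\epsilon}}$ condition for $\lambda\geq\widehat{\lambda}$, and the mountain pass theorem then yields the critical point $u_\epsilon$. The only cosmetic difference is that you invoke Willem's version to produce a $(PS)$ sequence and pass to the limit by hand (also noting nontriviality via $c_{\lambda,\epsilon}\geq\rho_\mu>0$), whereas the paper cites the Ambrosetti--Rabinowitz theorem directly once compactness is in place; the parameter bookkeeping you emphasize is exactly what the paper does.
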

\begin{proof}
By Lemma \ref{bound1}, there is $\mu_*>0$ such that $c_{\lambda,\epsilon}<\frac{q-2}{2q}S^{N/2}-\tau$, for all $\mu\geq\mu_*$ and $\epsilon>0$.  Hence, from Proposition \ref{bound3} there exists $\widehat{\lambda}>0$ such that the functional $I_{\lambda,\epsilon}$ satisfies $(PS)_{c_{\lambda,\epsilon}}$ condition for all $\lambda\geq\widehat{\lambda}$. Then, by mountain pass theorem \cite{AR}, there is $u_\epsilon\in E_\lambda$ such that
$$
I_{\lambda,\epsilon}(u_\epsilon)=c_{\lambda,\epsilon} \quad \text{and} \quad I'_{\lambda,\epsilon}(u_\epsilon)=0.
$$
\end{proof}


\subsection{Solution to the modified variational inequality}

For $\lambda \geq \lambda_*$ and $\mu \geq \mu_*$ fixed, there is a non-trivial and non-negative solution $u_\epsilon \in E_\lambda$ of the problem (\ref{eq3.5}), that is, $u_\epsilon \geq 0$ and  
$$
\int_{\R^N} [\nabla u_\epsilon \nabla v + (1+\lambda V(x)) u_\epsilon v]dx + \frac{1}{\epsilon}\langle P(u_\epsilon), v \rangle = \int_{\R^N} g(x,u_\epsilon)vdx,\;\;\forall v\in E_\lambda.
$$ 
In what follows, let us consider the following notations 
\begin{equation}\label{eq3.22-Z}
\epsilon = \frac{1}{n},\quad u_n = u_{\frac{1}{n}},\quad I_n = I_{\lambda, \epsilon}\quad \mbox{and}\quad I_n(u_n) = c_n= c_{\lambda, \epsilon}.
\end{equation}
Then, for each $n\in \N$ there is $u_n \in E_\lambda$ such that 
\begin{equation}\label{eq3.22-}
\int_{\R^N} [\nabla u_n \nabla v + (1+\lambda V(x))u_nv]dx + n\langle P(u_n), v\rangle = \int_{\R^N} g(x,u_n)vdx,\;\;\forall v\in E_\lambda.
\end{equation}
Moreover, as in Lemma \ref{boundps}, $(u_n)$ is bounded in $H^1(\R^N)$ and there is $u\in H^1(\R^N)$ such that, up to a subsequence,  
$$
\begin{aligned}
&u_n \rightharpoonup u\;\;\mbox{in}\;\;E_\lambda,\\
&u_n \to u\;\;\mbox{in}\;\;L_{loc}^{s}(\R^N), \;\;s\in [2, 2^*),\\
&u_n(x) \to u(x),\;\;\mbox{a.e. in}\;\;\R^N.
\end{aligned}
$$

Reasoning as in \cite[Lemma 3.11]{CALBCT}, it is possible to prove that $P(u) = 0$, that is, $u\in \mathbb{K}$.  

\begin{lemma}\label{conver}
The following convergence is true 
$$
u_n \to u\;\;\mbox{in}\;\;E_\lambda.
$$
\end{lemma}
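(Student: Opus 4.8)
The plan is to show $\|u_n-u\|_\lambda^2\to 0$ by testing the penalized equation (\ref{eq3.22-}) with $v=u_n-u$ and exploiting that the penalization term contributes with a favourable sign. Since $u_n\rightharpoonup u$ in $E_\lambda$, the weak convergence gives $\langle u,u_n-u\rangle_\lambda=o_n(1)$, so that
$$
\|u_n-u\|_\lambda^2=\langle u_n,u_n-u\rangle_\lambda+o_n(1).
$$
Plugging $v=u_n-u$ into (\ref{eq3.22-}) and recalling that $\langle P(u_n),v\rangle=-\int_\Omega(\varphi-u_n)^+v\,dx$, I obtain
$$
\langle u_n,u_n-u\rangle_\lambda=\int_{\R^N}g(x,u_n)(u_n-u)\,dx+n\int_\Omega(\varphi-u_n)^+(u_n-u)\,dx.
$$

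First I would dispose of the penalization integral. On $\{u_n<\varphi\}\cap\Omega$ one has $(\varphi-u_n)^+(u_n-u)=-(\varphi-u_n)^2+(\varphi-u_n)(\varphi-u)$; since $u\in\mathbb{K}$, i.e. $u\geq\varphi$ a.e. in $\Omega$ (here I use the already established fact $P(u)=0$), both summands are nonpositive, whence $\int_\Omega(\varphi-u_n)^+(u_n-u)\,dx\leq0$. Consequently
$$
\|u_n-u\|_\lambda^2\leq\int_{\R^N}g(x,u_n)(u_n-u)\,dx+o_n(1),
$$
and it remains only to prove that the right-hand integral tends to zero. This integral is treated as in Claim \ref{claim2}: splitting $\R^N=B(0,R)\cup B^c(0,R)$ with $\widetilde{\Omega}\subset B(0,R)$, the tail is controlled by $(g_5)$ together with the decay estimate of Lemma \ref{vanish} --- which stays valid for $(u_n)$ because the cut-off there is supported off $\Omega$, so the penalization drops out irrespective of $\epsilon=1/n$ --- while on $B(0,R)$ I use that $g(x,u_n)$ is bounded in $L^{(2^*)'}(B(0,R))$ and that $u_n-u\to0$ strongly in $L^{2^*}(B(0,R))$. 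This last strong local convergence is the crux and requires ruling out concentration of $|u_n|^{2^*}$.

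To this end I would rerun the concentration--compactness scheme of Proposition \ref{bound3} for the sequence $(u_n)$, whose mountain-pass levels $c_n=c_{\lambda,1/n}$ remain below $\frac{q-2}{2q}S^{N/2}-\tau$ by Lemma \ref{bound1}. Testing (\ref{eq3.22-}) with $\psi_\beta u_n$ and letting $n\to\infty$ and then $\beta\to0$ yields $\xi(x_j)\leq\nu_j$ at each atom, which together with $S\nu_j^{2/2^*}\leq\xi(x_j)$ and the level bound forces $\nu_j=0$ for every $j$. The main obstacle, and the only genuinely new point with respect to Proposition \ref{bound3}, is the penalization contribution $M_n=n\int_\Omega(\varphi-u_n)^+\psi_\beta u_n\,dx$, since now the parameter $n$ diverges rather than being fixed. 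I would control it as follows: from (\ref{eq3.22-}) and the boundedness of $(u_n)$ the functionals $v\mapsto n\int_\Omega(\varphi-u_n)^+v\,dx$ are bounded in $E_\lambda'$ (this is also what gives $P(u)=0$), hence converge weakly-$\ast$ to some $\ell\in E_\lambda'$ along a subsequence; using the pointwise bound $(\varphi-u_n)^+u_n\leq(\varphi-u_n)^+\varphi$ one gets $\limsup_n M_n\leq\ell(\psi_\beta\varphi)$, and since $\|\psi_\beta\varphi\|_\lambda\to0$ as $\beta\to0$ one concludes $\lim_{\beta\to0}\limsup_n M_n=0$. With concentration excluded we obtain $u_n\to u$ in $L^{2^*}_{loc}(\R^N)$, the reaction integral vanishes, and the displayed inequality forces $\|u_n-u\|_\lambda\to0$.
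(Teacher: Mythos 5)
Your proof is correct, and it follows the paper's skeleton up to one genuinely different step. The common skeleton: the identity $\|u_n-u\|_\lambda^2=n\langle P(u_n),u-u_n\rangle+\int_{\R^N}g(x,u_n)(u_n-u)\,dx+o_n(1)$, the sign of the penalization term (your pointwise computation on $\{u_n<\varphi\}\cap\Omega$ is exactly the monotonicity $\langle P(u_n)-P(u),u-u_n\rangle\leq 0$ with $P(u)=0$ that the paper invokes), the tail estimate of Lemma \ref{vanish} (where, as you note, the cut-off kills the penalization term exactly), and concentration--compactness to obtain $u_n\to u$ in $L^{2^*}_{loc}(\R^N)$. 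Where you diverge is the control of the penalization contribution $M_n=n\int_\Omega(\varphi-u_n)^+\psi_\beta u_n\,dx$ at the atoms. The paper changes the test function: it plugs $v=\psi_\beta u_n-\psi_\beta\varphi^+$ into (\ref{eq3.22-}) (see (\ref{eq3.33})--(\ref{eq3.34-})), so that $n\langle P(u_n),\psi_\beta(u_n-\varphi^+)\rangle\geq 0$ can be discarded for every $n$, at the price of the correction term $L_{n,\beta}$, shown to vanish as $\beta\to 0$. You instead keep $\psi_\beta u_n$ and bound $M_n$ via the uniform estimate $\sup_n\|nP(u_n)\|_{E_\lambda'}<\infty$ read off from the equation itself, together with $(\varphi-u_n)^+u_n\leq(\varphi-u_n)^+\varphi$ and $\|\psi_\beta\varphi\|_\lambda\to 0$ as $\beta\to 0$. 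This is sound, with two remarks: first, it requires $(u_n)$ bounded in the full $\|\cdot\|_\lambda$ norm uniformly in $n$, which does hold since $c_{\lambda,1/n}$ is bounded uniformly in $\epsilon$ (Lemma \ref{bound1}) and, for non-negative critical points, the penalization contribution to $I_n-\frac{1}{q}I_n'$ is non-negative because $(\varphi-u_n)^+\varphi\geq 0$; second, your weak-$\ast$ limit $\ell$ is superfluous, since $0\leq M_n\leq n\int_\Omega(\varphi-u_n)^+\psi_\beta\varphi\,dx\leq C\|\psi_\beta\varphi\|_\lambda$ already yields $\lim_{\beta\to 0}\limsup_n M_n=0$. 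The trade-off worth noting: your dual-norm bound uses the penalized \emph{equation} tested against arbitrary $v\in E_\lambda$, so it is unavailable at the later stage (Proposition \ref{propo}), where one only has the variational \emph{inequality} (\ref{eq3.41-}) and test functions must lie in $\mathbb{K}$; there the paper's device --- the admissibility of $u_n-\psi_\beta(u_n-\varphi^+)$ (Claim \ref{claim4}) --- is essential, which is presumably why the paper deploys it already in Lemma \ref{conver}, making the two steps uniform, whereas your route, while slightly more transparent here (and giving $P(u)=0$ for free), would have to be abandoned there.
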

\begin{proof}
Since
\begin{equation}\label{eq3.24-}
\|u_n\|_{\lambda}^{2} = I'_n(u_n)u_n - n \langle P(u_n), u_n\rangle + \int_{\R^N}g(x,u_n)u_n dx
\end{equation} 
and 
\begin{equation}\label{eq3.25-}
-\langle u_n, u\rangle_\lambda = -I'_{n}(u_n)u + n\langle P(u_n), u \rangle - \int_{\R^N} g(x,u_n)udx,
\end{equation}
we obtain 
\begin{equation}\label{eq3.26-}
\|u_n - u\|_{\lambda}^{2} = n\langle P(u_n), u-u_n\rangle + \int_{\R^N}g(x,u_n)u_ndx - \int_{\R^N}g(x,u_n)udx + o_n(1).
\end{equation}
Recalling that 
$$
\langle P(u_n), u-u_n\rangle = \langle P(u_n) - P(u), u-u_n\rangle \leq 0,
$$
the inequality (\ref{eq3.26-}) gives  
\begin{equation}\label{eq3.27-}
\|u_n-u\|_{\lambda}^{2} \leq \int_{\R^N} g(x,u_n)u_ndx - \int_{\R^N} g(x,u_n)udx + o_n(1).
\end{equation}

As in  Lemma \ref{vanish}, given $\delta >0$, there is $R>0$ such that 
\begin{equation}\label{eq3.30}
\limsup_{n\to \infty}\left(\int_{B^c(0,R)}(|\nabla u_n|^2 + |u_n|^2)dx \right) < \delta.
\end{equation}

Our next step is to show that Claim \ref{claim1} also holds for the sequence $(u_n)$ fixed in (\ref{eq3.22-Z}). Again, let us consider 
$$
\psi_\beta(x) = \psi \left( \frac{x-x_j}{\beta}\right)\;\;\mbox{for all}\;\;x\in \R^N\;\;\mbox{and for all} \;\;\beta >0,
$$ 
where $\psi \in C_{0}^{\infty}(\mathbb{R}^N)$ is such that $\psi = 1$ in $B(0,1)$, $\psi = 0$ in $B^c(0,2)$ and $|\nabla \psi(x)| \leq 2$ for all $x \in \mathbb{R}^N$, with $0\leq \psi \leq 1$. Note that 
\begin{equation}\label{eq3.33}
\begin{aligned}
\int_{\R^N}[\nabla u_n \nabla (\psi_\beta u_n - \psi_\beta \varphi^+) &+ (1+\lambda V(x))u_n (\psi_\beta u_n - \psi_\beta \varphi^+)]dx \\
&+ n \langle P(u_n), \psi u_n - \psi \varphi^+\rangle = \int_{\R^N} g(x,u_n)(\psi_\beta u_n - \psi_\beta \varphi^+)dx.
\end{aligned}
\end{equation}
Thus, 
\begin{equation}\label{eq3.34-}
\int_{\R^N}[\nabla u_n \nabla (\psi_\beta u_n - \psi_\beta \varphi^+) + (1+\lambda V(x))u_n (\psi_\beta u_n - \psi_\beta \varphi^+)]dx \leq \int_{\R^N}g(x,u_n)(\psi_\beta u_n - \psi_\beta \varphi^+)dx.
\end{equation}
Letting 
$$
L_{n,\beta} = \int_{\R^N}[\nabla u_n \nabla (\psi_\beta \varphi^+) + (1+\lambda V(x))u_n \psi_\beta \varphi^+]dx - \int_{\R^N}g(x,u_n)\psi_\beta \varphi^+dx,
$$
(\ref{eq3.34-}) can be rewrite as 
\begin{equation}\label{eq3.35-}
\int_{\R^N} [\nabla u_n \nabla (\psi_\beta u_n) + (1+\lambda V(x))u_n \psi_\beta u_n]dx - L_{n,\beta} \leq \int_{\R^N}g(x,u_n)\psi_\beta u_n dx.
\end{equation}
As before, it is possible to show that 
\begin{equation}\label{eq3.36-}
\lim_{\beta \to 0}\left( \limsup_{n\to \infty} |L_{n,\beta}| \right) = 0.
\end{equation}
Furthermore, as in the proof of Claim \ref{claim1},  $\nu_j =0$ for all $j\in \N$. Consequently 
\begin{equation}\label{eq3.37-}
u_n \to u\;\,\mbox{in}\;\;L_{loc}^{2^*}(\R^N).
\end{equation}

This limit together with ( \ref{eq3.30}) leads to  
\begin{equation}\label{eq3.28-}
\lim_{n\to \infty} \int_{\R^N}g(x,u_n)u_ndx = \int_{\R^N}g(x,u)udx
\end{equation} 
and 
\begin{equation}\label{eq3.29-}
\lim_{n\to \infty}\int_{\R^N}g(x,u_n)vdx = \int_{\R^N}g(x,u)vdx,\;\;\forall v\in H^1(\R^N).
\end{equation}
All of information are sufficient to conclude that 
$$
\|u_n - u\|_\lambda^2 = o_n(1),
$$
finishing the proof of Lemma \ref{conver}.
\end{proof}

Since   
$$
\langle P(u_n), w-u_n \rangle  = \langle P(u_n) - P(w), w-u_n\rangle \leq 0, \quad \forall w\in \mathbb{K}
$$
taking $v = w-u_n$ as a test function in (\ref{eq3.22-}), we obtain 
\begin{equation}\label{eq3.38-}
\int_{\R^N} \nabla u_n \nabla (w-u_n)dx + \int_{\R^N}(1+\lambda V(x))u_n (w-u_n)dx \geq \int_{\R^N} g(x,u_n)(w-u_n)dx.
\end{equation}
Thus, letting $n\to +\infty$ in (\ref{eq3.38-}) and employing Lemma \ref{conver},  
\begin{equation}\label{eq3.39}
\int_{\R^N}\nabla u \nabla (w-u)dx + \int_{\R^N}(1+\lambda V(x))u(w-u)dx \geq \int_{\R^N}g(x,u)(w-u)dx,
\end{equation}
showing that $u$ is a non-negative weak solution for problem (\ref{eq3.2}). 

 
\subsection{Proof of the Theorem \ref{main1}}

By the last section, we found a solution $u_\lambda$ to the modified variational inequality (\ref{eq3.2}). Now, we must show that $u_\lambda$ is a solution to the variational inequality (\ref{eq3.1}). In order to prove this, it is enough to show that  
\begin{equation}\label{eq3.40-}
u_\lambda (x) \leq a,\;\;\forall x\in \R^N \setminus \tilde{\Omega}.
\end{equation}

In what follows, let us consider $u_n\in E_{\lambda_n}$ and $\lambda_n \to \infty$ satisfying 
\begin{equation}\label{eq3.41-}
\int_{\R^N}\nabla u_n \nabla (v-u_n)dx + \int_{\R^N}(1+\lambda_n V(x))u_n (v-u_n)dx \geq \int_{\R^N}g(x,u_n)(v-u_n)dx, \quad \forall v\in \mathbb{K}.
\end{equation}

\begin{proposition}\label{propo}
Let $(u_n)$ be a sequence satisfying (\ref{eq3.41-}). Then, there exists a subsequence of $(u_n)$, still denoted by $(u_n)$, and $u\in H^1(\R^N)$ such that 
$$
u_n \rightharpoonup u \;\;\mbox{in}\;\;H^1(\R^N).
$$
Furthermore, 
\begin{enumerate}
\item $u \equiv 0$ in $\R^N \setminus \Omega$.
\item $\|u_n- u\|_{\lambda_n}^2 \to 0$.
\item As $\lambda_n \to \infty$ we have the following limits: 
$$
\begin{aligned}
&u_n \to u\;\;\mbox{in}\;\;H^1(\R^N),\\
& \lambda_n \int_{\R^N}V(x)|u_n|^2dx \to 0,\\
& \|u_n\|_{\lambda_n}^{2} \to \int_{\Omega}(|\nabla u|^2 + |u|^2)dx = \|u\|_{H^1(\Omega)}^{2}.
\end{aligned}
$$
\item The function $u$ is a solution of the variational inequality 
\begin{equation}\label{eq3.42-}
\int_{\Omega}\nabla u \nabla (v-u)dx + \int_{\Omega}u(v-u)dx \geq \int_{\Omega} (\mu |u|^{q-2}+|u|^{2^*-2})u(v-u)dx, 
\end{equation}
for every $v\in \tilde{\mathbb{K}}$, where 
$$
\tilde{\mathbb{K}} = \{v\in H_{0}^{1}(\Omega); v\geq \varphi \;\;\mbox{a.e. in}\;\;\Omega\}.
$$
\end{enumerate}
\end{proposition}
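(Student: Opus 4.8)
The plan is to run the del Pino--Felmer deconcentration scheme for the family $(u_n)$ solving (\ref{eq3.41-}), upgrade the weak limit to a strong one, and then pass to the limit in the inequality. First I would record that each $u_n=u_{\lambda_n}$ carries the a priori bound behind Lemma \ref{bound2}, and since the mountain-pass level stays below $\frac{q-2}{2q}S^{N/2}-\tau$ \emph{uniformly} in $\lambda$ by Lemma \ref{bound1}, the estimate $\|u_n\|_{\lambda_n}^2\leq C$ is uniform in $n$. Because $\|u_n\|\leq\|u_n\|_{\lambda_n}$, the sequence is bounded in $H^1(\R^N)$, so up to a subsequence $u_n\rightharpoonup u$ in $H^1(\R^N)$, with $u_n\to u$ in $L^s_{loc}(\R^N)$ for $s\in[2,2^*)$ and a.e. For item (1) I would use $\int_{\R^N}V(x)|u_n|^2dx\leq\frac{1}{\lambda_n}\|u_n\|_{\lambda_n}^2\to 0$; by a.e. convergence and Fatou's lemma $\int_{\R^N}V(x)|u|^2dx=0$, so $u=0$ a.e. on $\{V>0\}$, and $(V_1)$--$(V_2)$ then force $u\equiv 0$ a.e. in $\R^N\setminus\Omega$ (in particular $u\in H^1_0(\Omega)$ and $\int_{\R^N}V|u|^2dx=0$).

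The heart of the argument is item (2). Since $u_n\geq\varphi$ a.e. on $\Omega$ and $u_n\to u$ a.e., the limit satisfies $u\geq\varphi$ a.e. on $\Omega$, and $\int V|u|^2dx=0$ gives $u\in E_{\lambda_n}$; hence $u\in\mathbb{K}$ is admissible. Choosing $v=u$ in (\ref{eq3.41-}) and rearranging yields
\begin{equation*}
\|u_n-u\|_{\lambda_n}^2 \leq \int_{\R^N}g(x,u_n)(u_n-u)dx - \langle u,u_n-u\rangle_{\lambda_n}.
\end{equation*}
Because $Vu=0$ a.e., the cross term reduces to $\langle u,u_n-u\rangle_{H^1}\to 0$ by weak convergence, so everything reduces to proving $\int_{\R^N}g(x,u_n)(u_n-u)dx\to 0$. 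This I would establish exactly as in Claims \ref{claim1} and \ref{claim2}: the concentration--compactness Lemma \ref{C-C} together with the strict level bound $c_{\lambda_n}<\frac{q-2}{2q}S^{N/2}-\tau$ rules out Dirac masses ($\nu_j=0$ for all $j$), giving $u_n\to u$ in $L^{2^*}_{loc}(\R^N)$, while the uniform tail estimate (\ref{eq3.30}), obtained from $(g_5)$ as in Lemma \ref{vanish}, controls the integral outside a large ball. Splitting $\int g(x,u_n)(u_n-u)dx$ into a piece on $B(0,R)$, handled by local strong convergence and Sobolev embedding, and a piece on $B^c(0,R)$, handled by $(g_3)$, $(g_5)$ and the tail estimate, sends it to zero, whence $\|u_n-u\|_{\lambda_n}^2\to 0$.

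Items (3) and (4) then follow. From (2): $\|u_n-u\|_{H^1}\leq\|u_n-u\|_{\lambda_n}\to 0$, so $u_n\to u$ in $H^1(\R^N)$; using $Vu=0$ a.e. gives $\lambda_n\int V|u_n|^2dx=\lambda_n\int V|u_n-u|^2dx\leq\|u_n-u\|_{\lambda_n}^2\to 0$; and expanding $\|u_n\|_{\lambda_n}^2=\|u_n-u\|_{\lambda_n}^2+2\langle u_n-u,u\rangle_{\lambda_n}+\|u\|_{\lambda_n}^2$ together with $\|u\|_{\lambda_n}=\|u\|_{H^1}$ and $u$ supported in $\Omega$ yields $\|u_n\|_{\lambda_n}^2\to\int_\Omega(|\nabla u|^2+|u|^2)dx$. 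For (4) I would fix $v\in\tilde{\mathbb{K}}$ and use its extension by zero $\tilde v\in H^1(\R^N)$, which lies in $\mathbb{K}$ since $\int V|\tilde v|^2dx=0$; plugging $\tilde v$ into (\ref{eq3.41-}) and letting $n\to\infty$, the strong convergence in $E_{\lambda_n}$ kills every term carrying $\lambda_n V$ (because $\tilde v$ is supported where $V=0$ and $\lambda_n\int V|u_n|^2dx\to 0$), and on $\Omega$ one has $g(x,u)=f(u)=(\mu u^{q-2}+u^{2^*-2})u$ for $u\geq 0$; passing to the limit in the inequality then produces exactly (\ref{eq3.42-}) for all $v\in\tilde{\mathbb{K}}$.

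The main obstacle is the strong convergence (2): one must simultaneously prevent concentration of critical mass at finitely many points, which is where the uniform sub-threshold bound of Lemma \ref{bound1} is indispensable, and prevent mass from escaping to infinity as $\lambda_n\to\infty$, for which the coercive term $\lambda_n V$ and $(g_5)$ supply the tail control. Once both compactness failures are excluded, testing the variational inequality against the limit $u$ closes the estimate.
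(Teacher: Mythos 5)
Your overall strategy coincides with the paper's: bound $\|u_n\|_{\lambda_n}$ using the uniform level estimate, extract a weak limit, get item (1) from the vanishing of $\int_{\R^N}V(x)|u_n|^2dx$, take $v=u\in\mathbb{K}$ in (\ref{eq3.41-}) to reduce item (2) to $\int_{\R^N}g(x,u_n)(u_n-u)dx\to 0$, and deduce (3)--(4) as consequences, with item (4) via extension of $v\in\tilde{\mathbb{K}}$ by zero. However, there is one genuine gap at the decisive step: you claim the exclusion of Dirac masses goes ``exactly as in Claims \ref{claim1} and \ref{claim2}.'' It cannot, as written. In Claim \ref{claim1} the sequence satisfies $I'_{\lambda,\epsilon}(w_n)\to 0$, so one is free to test against the localized functions $\psi_\beta w_n$, which are arbitrary elements of $E_\lambda$. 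Here $(u_n)$ satisfies only the variational inequality (\ref{eq3.41-}), whose admissible variations are $v-u_n$ with $v\in\mathbb{K}$, and the naive choice $v=u_n-\psi_\beta u_n$ is in general \emph{not} in $\mathbb{K}$: multiplying by $1-\psi_\beta$ can push $u_n$ below the obstacle $\varphi$ inside $\Omega$, precisely when the concentration point $x_j$ lies in or near $\overline{\Omega}$. The missing idea --- which the paper explicitly flags as ``the main difficulty'' of this proposition --- is the corrected comparison function $v_n=u_n-\psi_\beta(u_n-\varphi^+)$ of Claim \ref{claim4}: one verifies $v_n\geq\varphi$ (using $u_n\geq\varphi^+\geq\varphi$, $0\leq\psi_\beta\leq 1$ and $supp\,\varphi^+\subset\Omega$), so $v_n\in\mathbb{K}$, and inserting it into (\ref{eq3.41-}) produces the one-sided localized inequality (\ref{eq3.52}), which replaces the testing identity of Claim \ref{claim1}; an upper bound on the localized terms is all the concentration--compactness exclusion needs, so the argument then closes.

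Two smaller remarks. First, your tail estimate is safe without modification, but for a reason worth stating: in the analogue of Lemma \ref{vanish} the cutoff $\eta$ vanishes on $\overline{B}(0,\frac{R}{2})\supset\widetilde{\Omega}\supset\Omega$, so $u_n-\eta u_n=u_n\geq\varphi$ a.e.\ in $\Omega$ and the comparison function is automatically in $\mathbb{K}$ --- the admissibility obstruction only bites for the interior localization around $x_j$. Second, in item (1) your Fatou argument gives $u=0$ a.e.\ on $\{V>0\}$, i.e.\ off $V^{-1}(\{0\})$; concluding $u\equiv 0$ off $\Omega=int\,(V^{-1}(\{0\}))$ implicitly uses that $V^{-1}(\{0\})\setminus\Omega$ is negligible, a point the paper outsources to \cite[Proposition 3.12]{CALB} rather than proving; your sketch shares exactly this reliance, so it is not a new defect, but it should not be attributed to $(V_1)$--$(V_2)$ alone.
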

\begin{proof}
As in Lemma \ref{bound2},  
\begin{equation}\label{eq3.43-} 
\limsup_{n\to +\infty} \|u_n\|_{\lambda_n}^{2} \leq \frac{4qM_2}{q-2},
\end{equation}
where $M_2=\sup_{n\in\N}c_{\lambda_n}$, which implies that $(\|u_n\|_{\lambda_n})$ is bounded in $\R$. Since 
$$
\|u_n\|_{\lambda_n} \geq \|u_n\|,\;\;\forall n\in \N,
$$
$(u_n)$ is also bounded in $H^1(\R^N)$. Therefore, there is $u\in H^1(\R^N)$ such that, up to a subsequence, 
$$
u_n \rightharpoonup u \;\;\mbox{in}\;\;H^1(\R^N).
$$ 
Now, we are going to show the items $(1)-(4)$.
\begin{enumerate}
\item It follows in a similar way as in \cite[Proposition 3.12]{CALB}. 

\item First of all, note that 
\begin{equation}\label{eq3.44-}
\|u_n - u\|_{\lambda_n}^{2} = \langle u_n, u_n-u\rangle_{\lambda_n} -  \langle u, u_n-u\rangle_{\lambda_n}.
\end{equation}
Since $u_n \rightharpoonup u$ in $H^1(\R^N)$, by (1),
$$
\begin{aligned}
 \langle u, u_n-u\rangle_{\lambda_n} 
&= \int_{\Omega} [\nabla u \nabla (u_n-u) + u(u_n-u)]dx = o_n(1).
\end{aligned}
$$
So, (\ref{eq3.44-}) can be rewritten as
\begin{equation}\label{eq3.45-}
\|u_n - u\|_{\lambda_n}^{2} = \langle u_n, u_n-u\rangle_{\lambda_n} + o_n(1).
\end{equation} 
On the other hand, as $u_n \geq \varphi$ a.e. in $\Omega$ for all $n\in \N$, then $u\geq \varphi$ a.e. in $\Omega$, and so, consequently $u\in \mathbb{K}$. Then, we can take $u$ as a test function in (\ref{eq3.41-}) to get 
\begin{equation}\label{eq3.46-}
\langle u_n, u_n-u\rangle_{\lambda_n} \leq \int_{\R^N} g(x,u_n)(u_n-u)dx.
\end{equation}
Combining (\ref{eq3.45-}) with (\ref{eq3.46-}), 
\begin{equation}\label{eq3.47-}
\|u_n - u\|_{\lambda_n}^{2} \leq \int_{\R^N}g(x,u_n)(u_n-u)dx + o_n(1).
\end{equation}
In order to conclude the proof, we must prove
\begin{equation}\label{eq3.48-}
\lim_{n\to \infty}\int_{\R^N}g(x,u_n)u_ndx = \int_{\R^N}g(x,u)u\,dx
\end{equation} 
and 
\begin{equation}\label{eq3.49-}
\lim_{n\to \infty}\int_{\R^N}g(x,u_n)vdx = \int_{\R^N}g(x,u)vdx,\;\;\forall v\in H^1(\R^N).
\end{equation}
However, as it was done in the previous section, it is enough to show the limit
\begin{equation}\label{eq3.50-}
u_n \to u\;\;\mbox{in}\;\;L_{loc}^{2^*}(\R^N),
\end{equation}
and that given $\delta >0$, there is $R>0$ such that 
\begin{equation}\label{eq3.51} 
\limsup_{n\to +\infty} \left(\int_{B^c(0,R)} (|\nabla u_n|^2 + |u_n|^2)dx \right) < \delta.
\end{equation}
 
 As in the proof of Proposition \ref{bound3}, (\ref{eq3.50-}) and (\ref{eq3.51}) are fundamental to justify the limits (\ref{eq3.48-}) and (\ref{eq3.49-}). To show (\ref{eq3.51}) we follow the same steps given in the proof of Lemma \ref{vanish}. The main difficulty to show (\ref{eq3.50-}) is to prove that $\nu_j=0$ for all $j\in \N$. So, we need to choose a suitable test function in (\ref{eq3.41-}), and thereby establishing an inequality similar to (\ref{eq3.34-}). In fact, as above we will consider 
$$
\psi_\beta (x) = \psi \Big( \frac{x-x_j}{\beta} \Big)\;\;\forall x\in \R^N\;\;\mbox{and}\;\;\forall \beta >0,
$$     
where $\psi\in C_{0}^{\infty}(\mathbb{R}^N)$ is such that $\psi = 1$ in $B(0,1)$, $\psi = 0$ in $B^c(0,2)$ and $|\nabla \psi| \leq 2$ with $0\leq \psi \leq 1$.
\begin{claim}\label{claim4}
Let $v_n = u_n - \psi_\beta (u_n- \varphi^+)$. Then $v_n \in \mathbb{K}$.
\end{claim}

\noindent 
In fact, we know that $u_n \geq \varphi^+ \geq \varphi,$ for all $n\in \N$. Note that, if $\psi_\beta = 0$, then $v_n = u_n \in \mathbb{K}$. Similarly, if $\psi_\beta = 1$, we have $v_n = \varphi^+ \in \mathbb{K}$. Now we suppose that $0< \psi_\beta <1$. Observe that 
$$
u_n - \psi_\beta u_n = u_n (1-\psi_\beta) \geq \varphi^+ (1-\psi_\beta) = \varphi^+ - \varphi^+ \psi_\beta,
$$
consequently 
$$
v_n = u_n - \psi_\beta u_n + \varphi^+  \psi_\beta \geq \varphi^+,
$$
next $v_n \in \mathbb{K}$ and we finish the proof of Claim \ref{claim4}.

By Claim (\ref{claim4}), we can take $v_n$ as a test function in (\ref{eq3.41-}) to obtain 
\begin{equation}\label{eq3.52}
\begin{array}{l} 
\displaystyle \int_{\R^N}[\nabla u_n \nabla (\psi_\beta u_n - \psi_\beta \varphi^+) + (1+\lambda_n V(x))u_n (\psi_\beta u_n - \psi_\beta \varphi^+)]dx \\
\mbox{} \\
\hspace{8 cm} \leq \displaystyle \int_{\R^N} g(x,u_n) (\psi_\beta u_n - \psi_\beta \varphi^+)dx. 
\end{array}
\end{equation}
Note that inequality (\ref{eq3.52}) looks similar to (\ref{eq3.34-}), then, following the same steps as in the proof of Claim \ref{claim1} we can infer that $\nu_j = 0$ for all $j\in \N$, which proves (\ref{eq3.50-}). Therefore, as in the proof of Lemma \ref{conver},  
$$
\|u_n - u\|_{\lambda_n}^{2} = o_n(1),
$$    
and the proof is complete.

\item Note that 
$$
\|u_n -u\|^2 \leq \|u_n -u\|_{\lambda_n}^{2}
$$
and 
$$
\int_{\R^N} \lambda_n V(x)|u_n|^2dx = \int_{\R^N} \lambda_n V(x)|u_n - u|^2dx \leq C \|u_n - u\|_{\lambda_n}^{2}.
$$
So, by item $(2)$,  
$$
\|u_n - u\|^2 \to 0
$$
and 
$$
\int_{\R^N}\lambda_n V(x)|u_n|^2dx \to 0.
$$

\item Let $v\in \tilde{\mathbb{K}}$ be arbitrary. Then,  by (\ref{eq3.41-}), 
$$
\int_{\R^N} \nabla u_n\nabla (v-u_n)dx + \int_{\R^N}u_nvdx- \int_{\R^N}(1+\lambda_n V(x))|u_n|^2dx \geq \int_{\R^N}g(x,u_n)(v-u_n)dx.
$$ 
Taking $n\to +\infty$ and using $(1)-(3)$, 
$$
\int_{\Omega}[\nabla u \nabla (v-u) + u(v-u)]dx \geq \int_{\Omega}g(x,u)(v-u)dx,\;\;\forall v\in \tilde{\mathbb{K}}.
$$
\end{enumerate}
\end{proof}

\noindent 
{\bf Proof of the Theorem \ref{main1}(conclusion):} Finally we are ready to conclude the proof of the Theorem \ref{main1}, and now the main tool is the iterative Moser method \cite{JM} based on the work of Gongbao \cite{LG}, which is a novelty for this type of inequality. For each $n\in \N$ and $L\geq 1$, let us define 
$$
u_{n,L}(x) = \begin{cases}
u_n,&\mbox{if $u_n \leq L$}\\
L, & \mbox{if $u_n \geq L$}.
\end{cases}
$$  
Let $d = dist(\overline{\Omega}, \partial \widetilde{\Omega})$ and $R\in (0,d)$. Given $x_0 \in \overline{\widetilde{\Omega}}^c$, we fixed a sequence $(r_j) \subset \R$ such that $R< r_j < d$, $\forall j\in \N$ and $r_j \searrow R$. Moreover, let 
$$
v_{n,L} (x) = v_{n,L,j}(x) = \left(u_n - u_n \eta^2 \left(\frac{u_{n,L}}{L} \right)^{2(\beta - 1)} \right)(x),
$$
where $\eta = \eta_j \in C^\infty (\R)$ is a function such that $0\leq \eta \leq 1$, $|\nabla \eta| \leq \frac{1}{r_j}$ and 
$$
\eta (x) = \begin{cases}
1,&\mbox{if $x\in B(x_0, r_{j+1})$}\\
0,&\mbox{if $x\in B^c(x_0 , r_j)$},
\end{cases}
$$
and $\beta >1$ will be fixed.
\begin{claim}\label{claim5}
For each $n\in \N$, $v_{n,L} \geq \varphi$ in $\R^N$.
\end{claim}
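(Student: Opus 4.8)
The plan is to combine an elementary pointwise bound on the cut-off factor multiplying $u_n$ with the geometric fact that $\eta$ is supported in a region where $\varphi$ is nonpositive. Concretely, I would first rewrite $v_{n,L} = u_n\bigl(1 - \eta^2(u_{n,L}/L)^{2(\beta-1)}\bigr)$ and bound the scalar factor, then localize the support of $\eta$ away from $\Omega$, and finally split according to whether $\eta$ vanishes.

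First I would record that the factor stays in $[0,1]$. Since $u_n \geq 0$ and $u_{n,L} = \min\{u_n,L\} \in [0,L]$, we have $0 \leq u_{n,L}/L \leq 1$; as $\eta \in [0,1]$ and $2(\beta-1)>0$, the product $\eta^2(u_{n,L}/L)^{2(\beta-1)}$ lies in $[0,1]$. Hence $1 - \eta^2(u_{n,L}/L)^{2(\beta-1)} \in [0,1]$, and therefore $v_{n,L} \geq 0$ throughout $\R^N$.

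Second, I would pin down the geometry so as to control $\varphi$ on $supp\,\eta$. Because $x_0 \in \overline{\widetilde{\Omega}}^c$ and $\overline{\Omega} \subset \widetilde{\Omega}$, any segment joining $x_0$ to a point of $\overline{\Omega}$ must cross $\partial\widetilde{\Omega}$, which yields $dist(x_0,\overline{\Omega}) \geq dist(\overline{\Omega},\partial\widetilde{\Omega}) = d$. Since $r_j < d$, the closed ball $\overline{B}(x_0,r_j)$---which contains $supp\,\eta$ because $\eta = 0$ on $B^c(x_0,r_j)$---is disjoint from $\overline{\Omega}$, and in particular from $\Omega$. As $supp\,\varphi^+ \subset \Omega$, this forces $\varphi \leq \varphi^+ = 0$ on $supp\,\eta$. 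Finally I would conclude by distinguishing two regions: at points where $\eta = 0$ one has $v_{n,L} = u_n \geq \varphi$, using that $u_n \geq \varphi$ a.e. in $\R^N$ (indeed $u_n \geq \varphi$ a.e. on $\Omega$ from $u_n \in \mathbb{K}$, and $u_n \geq 0 \geq \varphi$ off $\Omega$, where $\varphi^+ = 0$); at points where $\eta \neq 0$ one lies in $supp\,\eta$, so $\varphi \leq 0 \leq v_{n,L}$ by the first two steps. In both cases $v_{n,L} \geq \varphi$, which is the claim.

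I expect the only genuinely non-routine step to be the geometric estimate $dist(x_0,\overline{\Omega}) \geq d$, since this is exactly what forces $supp\,\eta$ to avoid the set $\Omega$ on which $\varphi^+$ is supported; once that localization is secured, the sign bookkeeping and the factorization bound are immediate.
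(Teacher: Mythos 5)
Your proof is correct and takes essentially the same route as the paper's: the paper likewise splits according to whether $\eta$ vanishes, using $v_{n,L}=u_n\geq\varphi$ on $B^c(x_0,r_j)$ and the bound $v_{n,L}\geq u_n\bigl[1-\bigl(\frac{u_{n,L}}{L}\bigr)^{2(\beta-1)}\bigr]\geq 0\geq\varphi$ where $\eta$ may be nonzero. Your explicit verification that $\operatorname{dist}(x_0,\overline{\Omega})\geq d$, so that $\operatorname{supp}\eta$ is disjoint from $\Omega$ and hence $\varphi\leq 0$ there, carefully fills in the geometric localization that the paper states only tersely.
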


\noindent 
In fact, note that $\eta=\eta_j=0$ in $B^c(x_0, r_j)$, for all $j\in\N$ then 
$$
v_{n,L}=u_n\geq\varphi.
$$
Since $\varphi\leq 0$ in $\Theta^c$ and $B^c(x_0, {r_{j}})\subset\Theta^c$, for every $j\in\N$ its follows that $\varphi\leq 0$ in $B^c(x_0, {r_{j}}).$

On the other hand, as $u_{n,L},u_{n,L}^{\beta-1}>0$ and $-\eta^2\geq-1$, then  
$$
-u_n\eta^2\Big(\displaystyle\frac{u_{n,L}}{L}\Big)^{2(\beta-1)}\geq-u_n\Big(\displaystyle\frac{u_{n,L}}{L}\Big)^{2(\beta-1)},
$$ 
so
$$
v_{n,L}=u_n-u_n\eta^2\Big(\displaystyle\frac{u_{n,L}}{L}\Big)^{2(\beta-1)}\geq u_n\Big[1-\Big(\displaystyle\frac{u_{n,L}}{L}\Big)^{2(\beta-1)}\Big]\geq0,
$$
from where $v_{n,L}\geq0\geq\varphi$ in $B^c(x_0, {r_{j}})$. Therefore, $v_{n,L}\in\mathbb{K}$.

By Claim \ref{claim5},  $v_{n,L}\in\K$ for each $n\in\N$. Now, using $v_{n,L}$ as a test function, we obtain 
$$
\begin{array}{ccc}
\displaystyle\frac{1}{L^{2(\beta-1)}}\Big(\int_{\R^N}\nabla u_n\nabla(u_n\eta^2u_{n,L}^{2(\beta-1)})\,dx\Big)&+&\displaystyle\frac{1}{L^{2(\beta-1)}}\Big(\int_{\R^N}(1+\lambda_nV(x))u_n^2\eta^2u_{n,L}^{2(\beta-1)}\,dx\Big)\\
&\leq&\displaystyle\frac{1}{L^{2(\beta-1)}}\Big(\int_{\R^N} g(x,u_n)u_n\eta^2u_{n,L}^{2(\beta-1)}\,dx\Big)\\
\end{array}
$$
that is 
\begin{equation}\label{eq3.53}
\begin{array}{ccc}
\displaystyle\int_{\R^N}\nabla u_n\nabla(u_n\eta^2u_{n,L}^{2(\beta-1)})\,dx&\leq-\displaystyle\int_{\R^N}(1+\lambda_nV(x))u_n^2\eta^2u_{n,L}^{2(\beta-1)}\,dx +\displaystyle\int_{\R^N} g(x,u_n)u_n\eta^2u_{n,L}^{2(\beta-1)}\,dx.\hspace{0.7cm}
\end{array}
\end{equation}
Recalling that $\lambda_n V(x)\geq0$ for all $x\in\mathbb{R}^N$, it follows that
\begin{equation}\label{lim.1}
\displaystyle\int_{\R^N}\nabla u_n\nabla(u_n\eta^2u_{n,L}^{2(\beta-1)})\,dx\leq \displaystyle\int_{\R^N} g(x,u_n)u_n\eta^2u_{n,L}^{2(\beta-1)}\,dx
-\displaystyle\int_{\R^N}u_n^2\eta^2u_{n,L}^{2(\beta-1)}\,dx.
\end{equation}
Hence,   
\begin{equation}
\begin{array}{ccc}
\displaystyle\int_{\R^N}|\nabla u_n|^2\eta^2u_{n,L}^{2(\beta-1)}\,dx+2\displaystyle\int_{\R^N}\nabla u_n\nabla\eta u_n\eta u_{n,L}^{2(\beta-1)}\,dx
&\leq&\displaystyle\int_{\R^N} g(x,u_n)u_n\eta^2u_{n,L}^{2(\beta-1)}\,dx\\
&-&\displaystyle\int_{\R^N}u_n^2\eta^2u_{n,L}^{2(\beta-1)}\,dx.\hspace{2cm}
\end{array}
\end{equation}
Now, by $(g_3)$,
\begin{equation}
\begin{array}{ccc}
\displaystyle\int_{\R^N}|\nabla u_n|^2\eta^2u_{n,L}^{2(\beta-1)}\,dx+2\displaystyle\int_{\R^N}\nabla u_n\nabla\eta u_n\eta u_{n,L}^{2(\beta-1)}\,dx
&\leq&C_\beta\displaystyle\int_{\R^N} u_n^{2^*}\eta^2u_{n,L}^{2(\beta-1)}\,dx\\
&+&(\mu\beta-1)\displaystyle\int_{\R^N}u_n^2\eta^2u_{n,L}^{2(\beta-1)}\,dx.\\
\end{array}
\end{equation}
Fixing $0<\beta<1/\mu$,  we derive that  
\begin{equation}
\displaystyle\int_{\R^N}|\nabla u_n|^2\eta^2u_{n,L}^{2(\beta-1)}\,dx+2\displaystyle\int_{\R^N}\nabla u_n\nabla\eta u_n\eta u_{n,L}^{2(\beta-1)}\,dx
\leq C\displaystyle\int_{\R^N}u_n^{2^{*}}\eta^2u_{n,L}^{2(\beta-1)}\,dx,
\end{equation}
where $C$ be a positive constant. 

The above inequality permits to apply the same arguments found in \cite{LG} to deduce that 
$u_n\in L^\infty(B(x_0, R))$ and 
\begin{equation}
|u_{n}|_{L^{\infty}(B_{R}(x_0))}\leq C
|u_{n}|_{L^{2^*}(B_{r_{1}}(x_0))}, \ \ \forall n\in\N,
\end{equation}
where $C$ is independent of $n$. Moreover, as $B(x_0, r_1)\subset\Omega^c$,  
$$
|u_{n}|_{L^{\infty}(B(x_0, R))}\leq C
|u_{n}|_{L^{2^*}(B(x_0, r_1))}\leq C |u_{n}|_{L^{2^*}(\Omega^c)}, \ \ \forall n\in\N.
$$
Recalling that 
$$
u_n=u_{n,\lambda_n}\rightarrow0 \quad \mbox{in} \quad H^1(\Omega^c) \quad \mbox{as} \quad \lambda_n\rightarrow+\infty, 
$$
given $\kappa>0$ there is $n_0\in\N$ such that
$$
|u_{n}|_{L^{\infty}(B(x_0, R))}<\kappa, \ \ \text{for} \ n\geq n_0.
$$
As $x_0\in\tilde{\Omega}^c$ is arbitrary, we derive  that  
$$
|u_{n}|_{L^{\infty}(\tilde{\Omega}^c)}<\kappa, \quad \forall n \geq n_0. 
$$
Therefore, by the previous analysis, there is $\lambda^*>0$ such that
\begin{center}
	$u_\lambda(x)\leq a$, $\forall x\in\Omega^c$ and $\lambda\geq\lambda^*$.
\end{center}
Thereby, for $\lambda\geq\lambda^*$, the function $u_\lambda$ satisfies the inequality below 
\begin{equation}
\displaystyle\int_{\mathbb{R}^N}\nabla u \nabla(v-u)\hspace{0.05cm}dx+\displaystyle\int_{\mathbb{R}^N}(1+\lambda V(x))u(v-u)\hspace{0.05cm}dx 
\geq \int_{\mathbb{R}^N}(\mu u^{q-2}+u^{2^*-2})u(v-u)\hspace{0.05cm}dx, \quad \forall v \in \mathbb{K},
\label{r5}
\end{equation}	
finishing  the proof of Theorem \ref{main1} for the case $N\geq 3$.



\section{Proof of Theorem \ref{main1}: Case $N=2$}

In this section, we follow the same strategy of the Section 3. From now on, let us fix  
$k=2\theta/(\theta-2)>2$ and $a>0$ such that $f(a)=a/k$. Moreover, we define the following functions 
$$ \widetilde{h}(t)= \left\{
\begin{array}{ll}
f(t),\ \ t \leq a \vspace{0.3cm}\\ 
\displaystyle\frac{t}{k},\ \ t \geq a\\
\end{array}
\right.
$$
and 
$$
\widetilde{H}(t)=\displaystyle\int_0^th(s)ds.
$$
Let $\widetilde{\Omega}$ be an open bounded connected set with smooth boundary such that $\overline{\Omega} \subset \widetilde{\Omega}$, and consider the functions $\widetilde{g}:\mathbb{R}^2\times\mathbb{R}\rightarrow \mathbb{R}$ and $\widetilde{G}:\mathbb{R}^2\times\mathbb{R}\rightarrow \mathbb{R}$ given by 
$$
\widetilde{g}(x,t)=\chi_{\widetilde{\Omega}}(x)\tilde{f}(t)+(1-\chi_{\widetilde{\Omega}}(x))h(t)
$$
and
$$
\widetilde{G}(x,t)=\chi_{\widetilde{\Omega}}(x)\tilde{F}(t)+(1-\chi_{\widetilde{\Omega}}(x))H(t),
$$
where
$$
\tilde{F}(t)=\displaystyle\int_0^t\tilde{f}(s)ds.
$$
Now, we deal with the existence of solution to the following modified variational inequality: 
\begin{equation}
\begin{cases}
u\in\mathbb{K},&\\
\displaystyle\int_{\mathbb{R}^2}\nabla u \nabla(v-u)\hspace{0.05cm}dx+\displaystyle\int_{\mathbb{R}^2}(1+\lambda V(x))u(v-u)\hspace{0.05cm}dx 
\geq \int_{\mathbb{R}^2}\widetilde{g}(x,u)(v-u)\hspace{0.05cm}dx, \quad \forall v \in \mathbb{K}. &
\end{cases}
\label{modificado2}
\end{equation}
Problem (\ref{modificado2}) is closely related to problem (\ref{eq3.2}) with $N=2$, because, if $u_\lambda$ is a solution of  (\ref{modificado2}) such that 
$$
u(x) \leq a, \ \forall x \in \mathbb{R}^2\setminus\widetilde{\Omega},
$$
then $u$ be a solution of the original variational inequality (\ref{eq3.1}). 

Next, we list some properties of the function $\widetilde{g}$. Since $a>0$, then 
\begin{itemize}
\item [$(\widetilde{g}_1)$] $\widetilde{g}(x,t)=0$, for all $x\in\mathbb{R}^2$ and $t\leq0$;
\item [$(\widetilde{g}_2)$] $\displaystyle\frac{\widetilde{g}(x,t)}{t}\rightarrow0$ as $t\rightarrow0$, uniformly in $x\in\mathbb{R}^2$;
\item [$(\widetilde{g}_3)$] Fixed $l>1$ and $\alpha\geq\alpha_0$ there is $C_\nu>0$ with $C_\nu \to 0$ as $\nu \to +\infty$ such that 
$$
|\widetilde{g}(x,t)|\leq \frac{1}{2}|t|+C_\nu|t|^{l}\Big(e^{\alpha|t|^2-1} \Big), \quad \forall (x,t)\in\mathbb{R}^2\times\mathbb{R}
$$
\item [$(\widetilde{g}_4)$] For all $x\in\widetilde{\Omega}$ and $t>0$ we have
$$
0<\theta \widetilde{G}(x,t)\leq \widetilde{g}(x,t)t;
$$
\item [$(\widetilde{g}_5)$] For all $x\in\mathbb{R}^2\setminus \widetilde{\Omega}$ and $t>0$ we have
$$
0<2\widetilde{G}(x,t)\leq \widetilde{g}(x,t)t\leq\frac{1}{k}(1+\lambda V(x))t^2.
$$
\end{itemize}

As in the last section, we are going to work with a penalized problem, that is, we will look for solutions to the following class of elliptic equation
\begin{equation}
-\Delta u+(1+\lambda V(x))u-\displaystyle\frac{1}{\epsilon}(\varphi-u)^+\chi_\Omega=\widetilde{g}(x,u), \ \ \text{in} \ \ \mathbb{R}^2.
\label{penalizado2}
\end{equation}
The weak solutions of (\ref{penalizado2}) are critical points of the functional $I_{\lambda,\epsilon}:
E_\lambda\rightarrow\mathbb{R}$ defined as
$$
I_{\lambda,\epsilon}(u)=\frac{1}{2}||u||^2_\lambda+\frac{1}{2\epsilon}\displaystyle\int_{\Omega}[(\varphi-u)^{+}]^2\,dx-\int_{\mathbb{R}^2}\widetilde{G}(x,u)\,dx,
$$
which is $C^1$ with 
$$
I_{\lambda,\epsilon}'(u)v=\langle u,v\rangle_\lambda-\frac{1}{\epsilon} \int_{\Omega}(\varphi-u)^+v\,dx- \int_{\mathbb{R}^2}\widetilde{g}(x,u) v\,dx,\;\;\forall u,v \in E_\lambda.
$$

In this section a key tool is the following Trundinger-Moser inequality due to Cao \cite{Cao}: 
$$
\displaystyle\int_{\mathbb{R}^2}\Big(e^{\alpha|u|^2}-1\Big)\hspace{0.05cm}dx<\infty, \ \forall u\in H^1(\mathbb{R}^2) \ \text{and} \ \alpha>0.
$$
Moreover, if $\alpha<4\pi$ and $|u|_2\leq M$, there exists $C=C(\alpha,M)>0$ such that
\begin{equation}
\displaystyle\sup_{|\nabla u|_2\leq1}\displaystyle\int_{\mathbb{R}^2}\Big(e^{\alpha|u|^2}-1\Big)\hspace{0.05cm}dx\leq C.
\label{T-M}
\end{equation}
The lemma below is a technical result that will be used later on. 
\begin{lemma}\label{lema3.10}  
	Let $(w_n)$ be a sequence in $H^1(\mathbb{R}^2)$ such that $\displaystyle\sup_{n\in\N}||w_n||^2\leq m<4\pi$. For each $\alpha>\alpha_0$ and $q>1$ such that $qm\alpha<4\pi$, there exists a constant $C=C(q,m,\alpha)>0$ such that $b_\alpha(w_n)=(e^{\alpha w_n^2}-1)$ belongs to $L^q(\mathbb{R}^2)$ and 
	$$
	\displaystyle\sup_{n\in\N}|b_\alpha(w_n)|_q<\infty.
	$$
\end{lemma}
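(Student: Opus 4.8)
The plan is to reduce the desired uniform $L^q$ bound for $b_\alpha(w_n)=e^{\alpha w_n^2}-1$ to a single application of the Trudinger--Moser inequality (\ref{T-M}), after two elementary reductions. First I would record the pointwise inequality
$$
(e^s-1)^q \le e^{qs}-1,\qquad s\ge 0,\ q\ge 1,
$$
which follows by noting that the difference of the two sides vanishes at $s=0$ and has nonnegative derivative (since $e^{(q-1)s}\ge (e^s-1)^{q-1}$). Applying it with $s=\alpha w_n^2$ gives
$$
|b_\alpha(w_n)|_q^{\,q}=\int_{\R^2}(e^{\alpha w_n^2}-1)^q\,dx \le \int_{\R^2}(e^{q\alpha w_n^2}-1)\,dx,
$$
so it suffices to bound the right-hand side uniformly in $n$.

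The second reduction is a normalization by the full $H^1$ norm. For $w_n\neq 0$ set $v_n=w_n/\|w_n\|$, so that $\|v_n\|=1$ and in particular $|\nabla v_n|_2\le 1$ and $|v_n|_2\le 1$. Since $\|w_n\|^2\le m$, I have pointwise
$$
q\alpha w_n^2 = q\alpha\|w_n\|^2\,v_n^2 \le (qm\alpha)\,v_n^2 =: \beta\, v_n^2,
$$
where by hypothesis $\beta=qm\alpha<4\pi$. Using the monotonicity of $t\mapsto e^t-1$, this yields
$$
\int_{\R^2}(e^{q\alpha w_n^2}-1)\,dx \le \int_{\R^2}(e^{\beta v_n^2}-1)\,dx.
$$
Now I apply (\ref{T-M}) with exponent $\beta<4\pi$ and $M=1$, which is legitimate because $|\nabla v_n|_2\le 1$ and $|v_n|_2\le 1$; this produces a constant $C=C(\beta,1)=C(q,m,\alpha)>0$ with $\int_{\R^2}(e^{\beta v_n^2}-1)\,dx\le C$ for every $n$. (The case $w_n=0$ is trivial, as then $b_\alpha(w_n)\equiv 0$.) Chaining the three estimates gives $|b_\alpha(w_n)|_q\le C^{1/q}$ uniformly in $n$, which is exactly the claim; in particular $b_\alpha(w_n)\in L^q(\R^2)$.

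The only genuinely delicate point — and what I would regard as the \emph{main obstacle} — is arranging that the constant extracted from (\ref{T-M}) is independent of $n$. This is precisely why one must normalize by the full norm $\|\cdot\|$ rather than by $|\nabla w_n|_2$ alone: the latter could degenerate to $0$ and make the $L^2$ part of the normalized function blow up, destroying the uniform bound $|v_n|_2\le 1$ needed to fix $M=1$. The condition $qm\alpha<4\pi$ is then exactly what keeps the admissible exponent $\beta$ below the critical threshold $4\pi$ of Cao's inequality. Everything else in the argument is routine.
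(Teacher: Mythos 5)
Your proof is correct and follows essentially the same route as the paper: reduce the power $q$ via a pointwise exponential inequality, normalize by the full $H^1$ norm so that $\lvert \nabla v_n\rvert_2 \le 1$ and $\lvert v_n\rvert_2 \le 1$, and invoke Cao's inequality (\ref{T-M}) with exponent $qm\alpha < 4\pi$. The only (harmless) difference is that you use the exact bound $(e^s-1)^q \le e^{qs}-1$, whereas the paper takes $q'>q$ close to $q$ with $q'm\alpha<4\pi$ and absorbs a multiplicative constant — your version is slightly cleaner but structurally identical.
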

\begin{proof} 
As $||w_n||^2\leq m$, for every $n\in\mathbb{N}$. Letting $\alpha>\alpha_0$ and $q>1$ such that $qm\alpha<4\pi$, it is possible to find $q'>q$ close to $q$ such that $q'm\alpha<4\pi$. Thereby, 
	$$
	\begin{array}{ccc}
	|b_\alpha(w_n)|_q^q=\displaystyle\int_{\mathbb{R}^2}|b_\alpha(w_n)|^q\hspace{0.05cm}dx
	&=&\displaystyle\int_{\mathbb{R}^2}(e^{\alpha|w_n|^2}-1)^q\hspace{0.05cm}dx\ \ \ \ \ \ \ \ \ \ \ \  \  \ \ \ \ \ \\\
	&\leq&C_1\displaystyle\int_{\mathbb{R}^2}(e^{q'\alpha|w_n|^2}-1)\hspace{0.05cm}dx \ \ \ \ \ \ \ \ \ \ \ \ \ \\
	&=& C_2\displaystyle\int_{\mathbb{R}^2}(e^{q'\alpha||w_n||_\lambda\big(\frac{|w_n|}{||w_n||_\lambda}\big)^2}-1)\hspace{0.05cm}dx\\
	&\leq& C_3\displaystyle\int_{\mathbb{R}^2}(e^{q'\alpha m\big(\frac{|w_n|}{||w_n||_\lambda}\big)^2}-1)\hspace{0.05cm}dx. \ \ \
	\end{array}
	$$
By (\ref{T-M}), there exists $C=C(q,m,\alpha)>0$ such that 
	$$
	|b_\alpha(w_n)|_q^q=\displaystyle\int_{\mathbb{R}^2}|b_\alpha(w_n)|^q\hspace{0.05cm}dx\leq C.
	$$
\end{proof}

Next, we will show some technical lemmas that are crucial in our approach. 

\begin{lemma} \noindent $(a)$ \, {There are constants $r_\nu,\rho_\nu >0$ with $r_\nu \to 0$ and $\rho_\nu \to 0$ as $\nu \to +\infty$}, independent of $\lambda$ and $\epsilon$, such that 
	$$
{I_{\lambda,\epsilon}(u) \geq \rho_\nu \quad \mbox{for} \quad \|u\|_{\lambda}=r_\nu;}
	$$
	\noindent $(b)$ \, There is $e \in H^1(\mathbb{R}^2)$ with {$\|e\|_{\lambda} >r_\nu$} and $I_{\lambda, \epsilon}(e)<0$.
	\label{lema8}
\end{lemma}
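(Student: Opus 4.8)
The plan is to establish the mountain-pass geometry for the functional $I_{\lambda,\epsilon}$ in the case $N=2$, mirroring the proof of Lemma \ref{mpgcon} but replacing the polynomial/critical Sobolev estimates by the exponential Trudinger--Moser machinery encoded in $(\widetilde{g}_3)$ and Lemma \ref{lema3.10}. For part $(a)$, I would start from the expression for $I_{\lambda,\epsilon}(u)$, discard the nonnegative penalization term, and bound the nonlinear part from below using $(\widetilde{g}_3)$. Fixing $l>1$ and $\alpha\geq\alpha_0$, integrating the pointwise bound on $\widetilde{g}$ yields
$$
\int_{\mathbb{R}^2} \widetilde{G}(x,u)\,dx \leq \frac{1}{4}\|u\|_\lambda^2 + C_\nu \int_{\mathbb{R}^2} |u|^l\bigl(e^{\alpha|u|^2}-1\bigr)\,dx.
$$
The $\frac{1}{4}|u|_2^2$ term is absorbed into $\frac12\|u\|_\lambda^2$, leaving a coefficient $\frac14$ in front of the quadratic part. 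For the exponential term I would apply H\"older's inequality with conjugate exponents to split off $\bigl(e^{\alpha|u|^2}-1\bigr)$ and then invoke Lemma \ref{lema3.10}: restricting to $\|u\|_\lambda = r_\nu$ with $r_\nu$ small enough that $q'\alpha r_\nu^2 < 4\pi$ makes the exponential factor uniformly bounded, so this term is controlled by a higher power $\|u\|_\lambda^{l+1}$ (up to constants). Thus on the sphere $\|u\|_\lambda = r_\nu$ one obtains
$$
I_{\lambda,\epsilon}(u) \geq \frac{1}{4}r_\nu^2 - C\,C_\nu\, r_\nu^{l+1},
$$
and since $l+1>2$, choosing $r_\nu$ small gives a positive lower bound $\rho_\nu$. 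The $\nu$-dependence of $C_\nu$ (with $C_\nu \to 0$ as $\nu\to+\infty$) is what forces $r_\nu,\rho_\nu\to 0$ as $\nu\to+\infty$, exactly as claimed; one balances the two terms so that the radius on which positivity holds shrinks with $\nu$.

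For part $(b)$, the argument is essentially identical to Lemma \ref{mpgcon}$(b)$. Using $supp\,\varphi^+ \subset \Omega \subset \widetilde{\Omega}$, the penalization term vanishes at $\varphi^+$ and $\widetilde{G}(x,t\varphi^+)=\widetilde{F}(t\varphi^+)$ on the support. I would then invoke the Ambrosetti--Rabinowitz condition $(\widetilde{g}_4)$, equivalently the original $(f_2)$ with $\theta>2$, to get the standard superquadratic lower bound $\widetilde{F}(s) \geq c_1 s^\theta - c_2$ for $s>0$, which yields
$$
I_{\lambda,\epsilon}(t\varphi^+) \leq \frac{t^2}{2}\|\varphi^+\|_\lambda^2 - c_1 t^\theta \int_{\widetilde{\Omega}} |\varphi^+|^\theta\,dx + c_2|\widetilde{\Omega}|.
$$
Since $\theta>2$, the right-hand side tends to $-\infty$ as $t\to+\infty$, so for $t_*$ large the point $e = t_*\varphi^+$ satisfies $I_{\lambda,\epsilon}(e)<0$ and $\|e\|_\lambda > r_\nu$.

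\textbf{The main obstacle} I anticipate is making part $(a)$ genuinely uniform in $\lambda$ and $\epsilon$ while tracking the dependence on $\nu$. The delicate point is that Lemma \ref{lema3.10} requires the \emph{Sobolev} norm $\|w_n\|^2 \leq m < 4\pi$, whereas the functional is controlled by the \emph{weighted} norm $\|\cdot\|_\lambda$; since $\|u\| \leq \|u\|_\lambda$, choosing $r_\nu$ small enough that $q'\alpha r_\nu^2 < 4\pi$ suffices and this threshold is independent of $\lambda$, which is precisely why the constants come out independent of $\lambda$ and $\epsilon$. I would be careful to fix the H\"older exponents $q,q'$ and the auxiliary $\alpha$ up front so that the Trudinger--Moser bound applies, and to verify that the penalization term really can be dropped (it is nonnegative, so discarding it only weakens the lower bound, which is harmless for $(a)$). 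The remaining work is routine, following the template already laid down in Lemma \ref{mpgcon}.
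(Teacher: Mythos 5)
Your route for both parts coincides with the paper's. For $(a)$, the paper likewise starts from $(\widetilde{g}_3)$, applies H\"older's inequality to split off the exponential factor, and bounds $\big|e^{\alpha|u|^2}-1\big|_{s_2}$ uniformly via the Trudinger--Moser inequality (\ref{T-M}) on spheres $\|u\|_{\lambda}=r$ with $s_2'\alpha r^2<4\pi$ (it uses (\ref{T-M}) directly rather than the sequence version, Lemma \ref{lema3.10}, but the computation is the same, and your observation that $\|u\|\leq\|u\|_{\lambda}$ makes the threshold independent of $\lambda$ and $\epsilon$ is exactly the right point). This yields $I_{\lambda,\epsilon}(u)\geq \frac{1}{4}\|u\|_{\lambda}^{2}-C_\nu\|u\|_{\lambda}^{l+1}$, matching (\ref{eq2.44}); note only the harmless slip that integrating the pointwise bound on $\widetilde{g}$ produces $|u|^{l+1}\big(e^{\alpha|u|^2}-1\big)$ rather than $|u|^{l}\big(e^{\alpha|u|^2}-1\big)$, consistent with your later use of $|u|_{(l+1)s_1}^{l+1}$. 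For $(b)$, the paper simply says ``as in Lemma \ref{mpgcon}'', and your implementation via the Ambrosetti--Rabinowitz bound $F(s)\geq c_1 s^{\theta}-c_2$ (in place of the explicit power nonlinearity of the case $N\geq3$) is the standard and correct way to carry that out, the penalization term vanishing along $t\varphi^{+}$, $t\geq 1$, exactly as you say.

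There is, however, one genuine error: your mechanism for $r_\nu,\rho_\nu\to 0$. You claim that $C_\nu\to 0$ as $\nu\to+\infty$ is ``what forces'' the radius to shrink, by balancing $\frac{1}{4}r^2$ against $C\,C_\nu r^{l+1}$. But the balance point is $r\sim (4CC_\nu)^{-1/(l-1)}$, which tends to $+\infty$ when $C_\nu\to 0$: a vanishing constant \emph{enlarges} the set of admissible radii, so nothing in your argument forces $r_\nu\to 0$, and the asserted asymptotics of the lemma would be left unproved (one could only declare $r_\nu\to0$ by fiat). The statement of $(\widetilde{g}_3)$ in the paper, with $C_\nu\to 0$, is in fact a misprint: in the proof of this very lemma the paper invokes the bound (\ref{eq2.40}) with $C_\nu\to\infty$ as $\nu\to\infty$, and that is the mathematically correct direction, since by $(f_4)$ one has $f(t)\geq \nu t^{p}$ for $t\geq 0$, so the constant in any upper bound for $\widetilde{g}$ must blow up with $\nu$ --- exactly as the coefficient $\mu$ blows up in Lemma \ref{mpgcon} for $N\geq 3$. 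With $C_\nu\to+\infty$ the argument closes as in Lemma \ref{mpgcon}: choose $r_\nu\leq \min\{r_0,\,(8CC_\nu)^{-1/(l-1)}\}$ comparable to the balance point (the Trudinger--Moser constraint $r_\nu\leq r_0$ is automatic for large $\nu$), obtaining $I_{\lambda,\epsilon}(u)\geq \frac{1}{8}r_\nu^{2}=:\rho_\nu$ on $\|u\|_{\lambda}=r_\nu$, with $r_\nu,\rho_\nu\to 0$. Correct the direction of the $\nu$-dependence and this balancing step; the rest of your argument stands.
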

\begin{proof} Fixing $l> 1$ and $\alpha>\alpha_0$, by $(\tilde{g}_3)$ there exists $C_\nu>0$ with $C_\nu \to \infty$ as $ \nu \to \infty$ such that
	\begin{equation}
	|\tilde{g}(x,t)|\leq \frac{1}{2}|t|+C_\nu|t|^l\Big(e^{\alpha|t|^2}-1\Big), \ \forall (x,t)\in\mathbb{R}^2\times \R.
	\label{eq2.40}
	\end{equation}
Thereby, by H\"older inequality and Sobolev embedding  
	\begin{equation}
	I_{\lambda,\epsilon}(u)\geq \frac{1}{4}||u||^2_\lambda-C_\nu|u|_{(l+1)s_1}^{l+1}\Big|e^{\alpha|u|^2}-1\Big|_{s_2},
	\label{eq2.44}
	\end{equation}
	with 
	$$
	\frac{1}{s_1} + \frac{1}{s_2} = 1.
	$$
	Now, let us fix $r_0>0$ such that $\alpha r_0^2<4\pi$, $s_2>1$ close to $1$ satisfying $s_2\alpha r_0^2<4\pi$, and $s_2'>s_2$ such that $s_2'\alpha r_0^2<4\pi$. Then, for $u\in E_\lambda$ with $||u||_\lambda<r_0$, 
	$$
	\begin{array}{ccc}
	\displaystyle\int_{\mathbb{R}^2}\big(e^{\alpha|u|^2}-1\big)^{s_2}\hspace{0.05cm}dx&\leq& C\displaystyle\int_{\mathbb{R}^2}(e^{s_2'\alpha|u|^2}-1)\hspace{0.05cm}dx \ \ \ \ \ \ \ \ \\
	&=&C\displaystyle\int_{\mathbb{R}^2}(e^{s_2'\alpha||u||_\lambda^2(\frac{|u|}{||u||_\lambda})^2}-1)\hspace{0.05cm}dx \\
	&\leq&C\displaystyle\int_{\mathbb{R}^2}(e^{s_2'\alpha r_0^2(\frac{|u|}{||u||_\lambda})^2}-1)\hspace{0.05cm}dx. \ \ \ \\
	\end{array}
	$$
Then by (\ref{T-M}), there is $C'>0$ such that
	\begin{equation}
	\Big|e^{\alpha|u|^2}-1\Big|_{s_2}\leq C', \ \ \text{for} \ \  ||u||_\lambda\leq r_0.
	\label{eq2.45}
	\end{equation}
Now, (\ref{eq2.44}) together with (\ref{eq2.45})  and Sobolev embeddings leads to  
	$$
	I_{\lambda,\epsilon}(u)\geq \frac{1}{4}||u||^2_\lambda-C_\nu||u||^{l+1}_\lambda.
	$$
Now, arguing as in the proof of Lemma \ref{mpgcon}, there are $r_\nu, \rho_\nu>0$ with $r_\nu, \rho_\nu \to 0$ as $\nu \to +\infty$ such that 
	$$
	I_{\lambda,\epsilon}(u)\geq \rho_\nu, \ \ \ \ \text{for} \ \ ||u||_\lambda=r_\nu,
	$$
which shows $(a)$.	The item $(b)$ follows as in the proof of Lemma \ref{mpgcon}.
\end{proof}

By Lemma \ref{lema8} and the mountain pass theorem due to Willem \cite{Willem}, there exists a
$(PS)c_{\lambda,\epsilon}$ sequence $(u_n)$ for $I_{\lambda,\epsilon}$ such that 
$$
I_{\lambda,\epsilon}(u_n)\rightarrow c_{\lambda,\epsilon} \;\; \text{and} \;\; I_{\lambda,\epsilon}'(u_n)\rightarrow 0,
$$
where
$$c_{\lambda,\epsilon}=\inf_{\gamma\in\Gamma}\max_{t\in[0,1]}I_{\lambda,\epsilon}(\gamma(t))$$
with 
$$
\Gamma=\{\gamma\in C([0,1],E_\lambda); \ \gamma(0)=\varphi^+ \ \text{and} \ \gamma(1)=\widetilde{w}\}.
$$

In the same way as in the proof of Lemma \ref{boundps}, we can prove the lemma below

\begin{lemma} \label{lema3.7}
If $(w_n)$ is a $(PS)_{c_{\lambda,\epsilon}}$ sequence for the functional $I_{\lambda,\epsilon}$, then $(w_n)$ is bounded 
\end{lemma}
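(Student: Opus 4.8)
The plan is to mimic verbatim the argument of Lemma \ref{boundps}, the only change being that the Ambrosetti--Rabinowitz exponent $q$ is replaced by the constant $\theta>2$ coming from $(f_2)$, since the truncated nonlinearity now satisfies $\theta\widetilde{G}(x,t)\le\widetilde{g}(x,t)t$ on $\widetilde{\Omega}$ via $(\widetilde{g}_4)$. First I would write down, straight from the definitions of $I_{\lambda,\epsilon}$ and $I_{\lambda,\epsilon}'$, the identity
$$
I_{\lambda,\epsilon}(w_n)-\frac{1}{\theta}I_{\lambda,\epsilon}'(w_n)w_n=\Big(\frac12-\frac1\theta\Big)\|w_n\|_\lambda^2+\frac{1}{2\epsilon}\int_\Omega[(\varphi-w_n)^+]^2\,dx+\frac{1}{\theta\epsilon}\int_\Omega(\varphi-w_n)^+w_n\,dx+\frac1\theta\int_{\mathbb{R}^2}\big(\widetilde{g}(x,w_n)w_n-\theta\widetilde{G}(x,w_n)\big)\,dx .
$$

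Next I would estimate each non-quadratic contribution from below. For the penalization terms, since $\theta>2$ and $[(\varphi-w_n)^+]^2+(\varphi-w_n)^+w_n=(\varphi-w_n)^+\varphi$ pointwise, I get
$$
\frac{1}{2\epsilon}\int_\Omega[(\varphi-w_n)^+]^2\,dx+\frac{1}{\theta\epsilon}\int_\Omega(\varphi-w_n)^+w_n\,dx\ge\frac{1}{\theta\epsilon}\int_\Omega(\varphi-w_n)^+\varphi\,dx ,
$$
and then H\"older's inequality together with the continuous embedding $E_\lambda\hookrightarrow L^2(\mathbb{R}^2)$ bounds the right-hand side below by $-\frac{1}{\theta\epsilon}|\varphi|_2^2-\frac{C}{\theta\epsilon}|\varphi|_2\|w_n\|_\lambda$. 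For the nonlinear term I would split $\mathbb{R}^2=\widetilde{\Omega}\cup(\mathbb{R}^2\setminus\widetilde{\Omega})$: on $\widetilde{\Omega}$, $(\widetilde{g}_4)$ makes $\widetilde{g}(x,w_n)w_n-\theta\widetilde{G}(x,w_n)\ge0$; on the complement $\widetilde{g}(x,w_n)w_n\ge0$ and $(\widetilde{g}_5)$ gives $\widetilde{G}(x,w_n)\le\frac{1}{2k}(1+\lambda V(x))w_n^2$, so that this whole term is bounded below by $-\frac{1}{2k}\|w_n\|_\lambda^2$.

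Finally I would combine the bounds. Because here $k=\frac{2\theta}{\theta-2}$, one has the key identity $\frac{1}{2k}=\frac12\big(\frac12-\frac1\theta\big)$, so the two quadratic contributions collapse into the single strictly positive term $\frac12\big(\frac12-\frac1\theta\big)\|w_n\|_\lambda^2$, yielding
$$
I_{\lambda,\epsilon}(w_n)-\frac1\theta I_{\lambda,\epsilon}'(w_n)w_n\ge\frac12\Big(\frac12-\frac1\theta\Big)\|w_n\|_\lambda^2-\frac{1}{\theta\epsilon}|\varphi|_2^2-\frac{C}{\theta\epsilon}|\varphi|_2\|w_n\|_\lambda .
$$
On the other hand, the $(PS)_{c_{\lambda,\epsilon}}$ hypothesis provides $n_0\in\N$ with $I_{\lambda,\epsilon}(w_n)-\frac1\theta I_{\lambda,\epsilon}'(w_n)w_n\le c_{\lambda,\epsilon}+o_n(1)+o_n(1)\|w_n\|_\lambda$ for $n\ge n_0$. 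Comparing the two inequalities gives $a\|w_n\|_\lambda^2\le b+c\|w_n\|_\lambda$ with $a>0$, from which $(\|w_n\|_\lambda)$ must be bounded. I do not expect a genuine obstacle: the argument is purely algebraic and uses only the sign and growth conditions $(\widetilde{g}_4)$--$(\widetilde{g}_5)$, never the exponential (Trudinger--Moser) structure of the problem. The only points demanding care are the systematic replacement of $q$ by $\theta$ and the verification of the matching relation $\frac{1}{2k}=\frac12\big(\frac12-\frac1\theta\big)$ that makes the quadratic terms combine with a positive coefficient.
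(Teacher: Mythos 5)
Your proposal is correct and is precisely the argument the paper intends: for this lemma the paper simply writes ``in the same way as in the proof of Lemma \ref{boundps}'', and your systematic replacement of $q$ by $\theta$, the use of $(\widetilde{g}_4)$--$(\widetilde{g}_5)$, and the verification that $k=\frac{2\theta}{\theta-2}$ yields $\frac{1}{2k}=\frac{1}{2}\bigl(\frac{1}{2}-\frac{1}{\theta}\bigr)$ reproduce that proof exactly. Your pointwise identity $[(\varphi-w_n)^+]^2+(\varphi-w_n)^+w_n=(\varphi-w_n)^+\varphi$ is even slightly sharper than the inequality stated in the paper, and you correctly note that the Trudinger--Moser structure plays no role here.
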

	
Reasoning  as in Lemma \ref{boundps}, we can also assume that $(w_n)$ is formed by non-negative functions and  
\begin{equation} \label{Z1001}
\displaystyle\limsup_{n\rightarrow\infty} ||w_n||^2_\lambda\leq \frac{4\theta}{\theta -2}c_{\lambda,\epsilon}.
\end{equation}

\begin{lemma}\label{lema3.8} 
If $(u_n)$ is a $(PS)_{c_{\lambda, \epsilon}}$ sequence for the functional $I_{\lambda,\epsilon}$, then, given $\delta>0$ there is  $R>0$ such that
$$
\displaystyle\limsup_{n\rightarrow\infty}  \Bigg(\int_{B^c(0, R)}(|\nabla u_n|^2+|u_n|^2)\hspace{0.05cm}dx\Bigg)<\delta
$$
\end{lemma}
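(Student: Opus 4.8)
The plan is to reproduce, almost verbatim, the argument of Lemma \ref{vanish} from the case $N\geq 3$. The crucial observation is that this is a \emph{far-field} estimate, carried out on the complement of a large ball containing $\widetilde{\Omega}$, and there the modified nonlinearity $\widetilde{g}$ obeys the quadratic control $(\widetilde{g}_5)$, exactly as $g$ did in Proposition \ref{prop3.1}. Consequently the exponential growth and the Trudinger--Moser machinery play no role whatsoever in this lemma, and the proof is dimension-independent.

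First I would invoke Lemma \ref{lema3.7} together with \eqref{Z1001} to know that $(w_n)$ is bounded in $E_\lambda$. Then I fix $R>0$ large enough that $\widetilde{\Omega}\subset B(0,\tfrac{R}{2})$ and choose $\eta\in C^1(\R^2,\R)$ with $0\leq \eta\leq 1$, $\eta\equiv 0$ on $\overline{B}(0,\tfrac{R}{2})$, $\eta\equiv 1$ on $B^c(0,R)$, and $|\nabla\eta|\leq \tfrac{M_1}{R}$. Since $(\eta w_n)\subset E_\lambda$ is bounded, I insert it as a test function into $I'_{\lambda,\epsilon}(w_n)(\eta w_n)=o_n(1)$. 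Because $supp\,\varphi^+\subset\Omega\subset\widetilde{\Omega}\subset B(0,\tfrac{R}{2})$, while $\eta$ vanishes on $\overline{B}(0,\tfrac{R}{2})$, the penalization term $\tfrac{1}{\epsilon}\int_\Omega(\varphi-w_n)^+\eta w_n\,dx$ is identically zero.

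Expanding $\nabla(\eta w_n)=\eta\nabla w_n+w_n\nabla\eta$ and using $(\widetilde{g}_5)$ to bound $\widetilde{g}(x,w_n)\eta w_n\leq \tfrac{1}{k}(1+\lambda V(x))\eta w_n^2$ on the support of $\eta$, which lies in $\R^2\setminus\widetilde{\Omega}$, I arrive at
$$
\int_{B^c(0,R)}|\nabla w_n|^2\,dx+\Big(1-\tfrac{1}{k}\Big)\int_{B^c(0,R/2)}(1+\lambda V(x))\eta w_n^2\,dx\leq \int_{\overline{B}(0,R)}|w_n|\,|\nabla\eta|\,|\nabla w_n|\,dx+o_n(1).
$$
Since $k>2$ gives $1-\tfrac{1}{k}>\tfrac{1}{2}$, and since $\eta\equiv 1$ on $B^c(0,R)$, the left-hand side dominates $\int_{B^c(0,R)}\big(|\nabla w_n|^2+\tfrac{1}{2}(1+\lambda V(x))w_n^2\big)\,dx$. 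Estimating the cross term by H\"older together with the uniform bound on $\|w_n\|_\lambda$ yields
$$
\int_{B^c(0,R)}(|\nabla w_n|^2+|w_n|^2)\,dx\leq \frac{C}{R}+o_n(1),
$$
where $C$ is independent of $n$ and $R$. Taking $\limsup_{n\to\infty}$ and then choosing $R$ large enough finishes the proof.

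I do not anticipate a genuine obstacle: the entire estimate is driven by $(\widetilde{g}_5)$ on $\R^2\setminus\widetilde{\Omega}$, so the only point worth emphasizing is precisely that the exponential nonlinearity is confined to $\widetilde{\Omega}$ and therefore never enters the tail estimate. The single verification to make is that $supp\,\varphi^+$ and $\widetilde{\Omega}$ both sit inside $B(0,\tfrac{R}{2})$, which legitimizes both the vanishing of the penalization term and the applicability of $(\widetilde{g}_5)$ on the region where $\eta\neq 0$.
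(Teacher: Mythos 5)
Your proposal is correct and is essentially the paper's own proof: the paper disposes of this lemma with the single line ``See proof of Lemma \ref{vanish}'', and your argument is precisely that proof transcribed to $N=2$, with the right key observation that the tail estimate only uses the quadratic bound $(\widetilde{g}_5)$ outside $\widetilde{\Omega}$ (so the exponential growth and Trudinger--Moser inequality never enter) and that the penalization term vanishes because $\Omega\subset\widetilde{\Omega}\subset B(0,\tfrac{R}{2})$ lies where $\eta\equiv 0$.
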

\begin{proof} See proof of Lemma \ref{vanish}.
\end{proof}	
\begin{lemma} \label{lema3.9}
Let $c_{\lambda,\epsilon}$ be the mountain pass level for the functional $I_{\lambda,\epsilon}$. Then, there exists a constant $A>0$, independent of $\nu$, such that 
$$
0<c_{\lambda,\epsilon}<A/\nu^{\frac{2}{p-1}}.
$$
\end{lemma}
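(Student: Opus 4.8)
The plan is to pin down $c_{\lambda,\epsilon}$ from both sides using an explicit admissible path in $\Gamma$ built from the obstacle $\varphi^+$. The positivity $c_{\lambda,\epsilon}>0$ is immediate from the mountain pass geometry: by Lemma~\ref{lema8}$(a)$ one has $I_{\lambda,\epsilon}(u)\geq\rho_\nu>0$ whenever $\|u\|_\lambda=r_\nu$, while (as in the proof of Lemma~\ref{lema8}) $\|\varphi^+\|_\lambda<r_\nu<\|\widetilde w\|_\lambda$ with $I_{\lambda,\epsilon}(\widetilde w)<0$. Since every $\gamma\in\Gamma$ joins $\varphi^+$ to $\widetilde w$, it must meet the sphere $\|u\|_\lambda=r_\nu$, so $\max_{t}I_{\lambda,\epsilon}(\gamma(t))\geq\rho_\nu$, whence $c_{\lambda,\epsilon}\geq\rho_\nu>0$.

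For the upper bound I would take $\widetilde w=T\varphi^+$ with $T>1$ large enough that $I_{\lambda,\epsilon}(\widetilde w)<0$ (possible since $I_{\lambda,\epsilon}(s\varphi^+)\to-\infty$ as $s\to+\infty$ by $(f_2)$), and use the straight segment $\gamma_0(t)=\big(1+t(T-1)\big)\varphi^+$, $t\in[0,1]$, which is admissible and sweeps out $\{\,s\varphi^+:1\leq s\leq T\,\}$. The decisive simplification is that, since $\mathrm{supp}\,\varphi^+\subset\Omega$ and $\varphi^+\geq\varphi$, for every $s\geq1$ one has $(\varphi-s\varphi^+)^+\equiv0$ on $\Omega$, so the penalization term vanishes, and $V\equiv0$ on $\mathrm{supp}\,\varphi^+$ gives $\|s\varphi^+\|_\lambda=s\|\varphi^+\|$. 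Moreover $\mathrm{supp}\,\varphi^+\subset\Omega\subset\widetilde\Omega$, where $\widetilde g(x,\cdot)=f$ and $\widetilde G(x,\cdot)=F$ with $F(t)=\int_0^t f(s)\,ds$. Hence, for $s\geq1$,
$$
I_{\lambda,\epsilon}(s\varphi^+)=\frac{s^2}{2}\|\varphi^+\|^2-\int_\Omega F(s\varphi^+)\,dx.
$$

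Now I would invoke $(f_4)$, which yields $F(t)\geq\frac{\nu}{p+1}\,t^{p+1}$ for $t\geq0$, to obtain the majorization $I_{\lambda,\epsilon}(s\varphi^+)\leq\phi(s)$ for $s\geq1$, where
$$
\phi(s)=\frac{s^2}{2}\|\varphi^+\|^2-\frac{\nu\,s^{p+1}}{p+1}\,|\varphi^+|_{p+1}^{p+1}.
$$
An elementary one-variable computation shows $\phi$ attains its maximum over $[0,+\infty)$ at $s_*=\big(\|\varphi^+\|^2/(\nu\,|\varphi^+|_{p+1}^{p+1})\big)^{1/(p-1)}$, with
$$
\max_{s\geq0}\phi(s)=\Big(\tfrac12-\tfrac1{p+1}\Big)\|\varphi^+\|^2\,s_*^2=A\,\nu^{-\frac{2}{p-1}},
$$
where $A=\big(\tfrac12-\tfrac1{p+1}\big)\|\varphi^+\|^2\big(\|\varphi^+\|^2/|\varphi^+|_{p+1}^{p+1}\big)^{2/(p-1)}$. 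Since the segment only visits $s\in[1,T]\subset[0,+\infty)$, I conclude
$$
c_{\lambda,\epsilon}\leq\max_{t\in[0,1]}I_{\lambda,\epsilon}(\gamma_0(t))\leq\max_{s\in[1,T]}\phi(s)\leq\max_{s\geq0}\phi(s)=A\,\nu^{-\frac{2}{p-1}}.
$$

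The routine parts are the vanishing of the penalization term for $s\geq1$ and the calculus maximization. The single point deserving care is checking that the constant $A$ carries no hidden $\nu$-dependence: this is exactly why the admissible path is run through the \emph{fixed} profile $\varphi^+$ rather than a $\nu$-dependent rescaling, so that $\|\varphi^+\|^2$ and $|\varphi^+|_{p+1}^{p+1}$ are frozen and the entire $\nu$-dependence is displayed by the explicit factor $\nu^{-2/(p-1)}$. I expect no further obstacle, since the Trudinger--Moser compactness subtleties of the $N=2$ case do not intervene in this a~priori level estimate.
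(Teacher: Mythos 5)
Your proof is correct, and it is essentially the argument the paper relies on: the paper itself gives no proof of this lemma, deferring to \cite[Lemma 4.6]{CALBCT}, and the standard estimate there is exactly your device of running the admissible path along the ray $s\mapsto s\varphi^+$, where the penalization term and the $\lambda V$-term both vanish (since $s\varphi^+\geq\varphi$ for $s\geq1$ and $supp\,\varphi^+\subset\Omega=int(V^{-1}(\{0\}))$), and then majorizing via $(f_4)$ to get the explicit maximum $A\,\nu^{-2/(p-1)}$ with $A$ frozen because the profile $\varphi^+$ is $\nu$-independent. The only cosmetic point is that your chain yields $c_{\lambda,\epsilon}\leq A\,\nu^{-2/(p-1)}$ rather than the strict inequality in the statement; this is harmless, since replacing $A$ by $2A$ (still independent of $\nu$, $\lambda$ and $\epsilon$) gives $c_{\lambda,\epsilon}<2A\,\nu^{-2/(p-1)}$, which is all the lemma is used for (namely, forcing $c_{\lambda,\epsilon}<\frac{\theta-2}{\theta}\frac{\pi}{\alpha_0}$ for $\nu$ large).
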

\begin{proof} See \cite[Lemma 4.6]{CALBCT}.
\end{proof}	

\begin{proposition}\label{prop3.3}
For $\lambda\geq 1$, the functional  $I_{\lambda,\epsilon}$ satisfies the $(PS)_{d}$ sequence for $d \in (0, \frac{\theta-2}{\theta}\frac{\pi}{\alpha_0})$. 
\end{proposition}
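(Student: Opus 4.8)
The plan is to follow the same scheme used in Proposition \ref{bound3} for the case $N\geq 3$, with the crucial difference that the obstruction to compactness is no longer a Sobolev concentration (Dirac masses) but the Trudinger--Moser threshold. The entire point of restricting $d$ to the interval $\left(0,\frac{\theta-2}{\theta}\frac{\pi}{\alpha_0}\right)$ is to force a $(PS)_d$ sequence to stay, in norm, strictly below the level at which the estimate (\ref{T-M}) degenerates.

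First I would take a sequence $(w_n)\subset E_\lambda$ with $I_{\lambda,\epsilon}(w_n)\to d$ and $I_{\lambda,\epsilon}'(w_n)\to 0$. By Lemma \ref{lema3.7} it is bounded, and reasoning as in Lemma \ref{boundps} I may assume $w_n\geq 0$ and, by (\ref{Z1001}) with $c_{\lambda,\epsilon}$ replaced by $d$,
$$
\limsup_{n\to\infty}\|w_n\|_\lambda^2 \leq \frac{4\theta}{\theta-2}\,d < \frac{4\theta}{\theta-2}\cdot\frac{\theta-2}{\theta}\frac{\pi}{\alpha_0} = \frac{4\pi}{\alpha_0}.
$$
Thus, after passing to a subsequence, there is $m$ with $\alpha_0 m<4\pi$ and $\|w_n\|^2\leq\|w_n\|_\lambda^2\leq m$ for $n$ large, so I can pick $\alpha>\alpha_0$ and $q>1$ with $q\alpha m<4\pi$ and invoke Lemma \ref{lema3.10} to obtain $\sup_n|e^{\alpha w_n^2}-1|_q<\infty$. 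This uniform integrability of the exponential term is the engine of the whole argument. Up to a subsequence I also have $w_n\rightharpoonup w$ in $E_\lambda$, $w_n\to w$ in $L^s_{loc}(\R^2)$ for every $s\in[1,\infty)$, and $w_n\to w$ a.e.\ in $\R^2$.

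Next I would record the standard splitting coming from $I_{\lambda,\epsilon}'(w_n)(w_n-w)=o_n(1)$ together with $\langle w_n,w_n-w\rangle_\lambda=\|w_n-w\|_\lambda^2+o_n(1)$:
$$
\|w_n-w\|_\lambda^2 = \frac{1}{\epsilon}\int_\Omega(\varphi-w_n)^+(w_n-w)\,dx + \int_{\R^2}\widetilde g(x,w_n)(w_n-w)\,dx + o_n(1).
$$
The penalization term is $o_n(1)$ because $\Omega$ is bounded, $(\varphi-w_n)^+$ is bounded in $L^2(\Omega)$ and $w_n\to w$ in $L^2(\Omega)$. So everything reduces to proving $\int_{\R^2}\widetilde g(x,w_n)(w_n-w)\,dx\to 0$, which I would handle by fixing $\delta>0$ and splitting $\R^2=B(0,R)\cup B^c(0,R)$ with $\widetilde\Omega\subset B(0,R)$. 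On the tail $B^c(0,R)$, where $\widetilde g(x,\cdot)$ no longer depends on $x$ and $(\widetilde g_5)$ gives $\widetilde g(x,w_n)w_n\leq\frac{1}{k}|w_n|^2$, Lemma \ref{lema3.8} together with the smallness of $\int_{B^c(0,R)}|w|^2$ for large $R$ controls this piece. On $B(0,R)$ the linear part $\frac12\int|w_n||w_n-w|$ vanishes by $L^2_{loc}$ convergence, while for the exponential part $C_\nu\int_{B(0,R)}|w_n|^l(e^{\alpha w_n^2}-1)|w_n-w|\,dx$ I would apply H\"older with three exponents $\frac{1}{a_1}+\frac{1}{q}+\frac{1}{a_3}=1$, placing $e^{\alpha w_n^2}-1$ in $L^q$ (bounded above by Lemma \ref{lema3.10}), $|w_n|^l$ in $L^{a_1}$ (bounded via $H^1(\R^2)\hookrightarrow L^{a_1 l}$) and $|w_n-w|$ in $L^{a_3}(B(0,R))$ (which tends to $0$). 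Letting $n\to\infty$ and then $R\to\infty$ yields the claim, hence $\|w_n-w\|_\lambda\to 0$.

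The main obstacle is precisely the control of the exponential term on $B(0,R)$. Unlike the case $N\geq 3$, there is no concentration--compactness dichotomy to rule out; the whole difficulty is condensed into guaranteeing that the sequence remains below the Trudinger--Moser threshold so that Lemma \ref{lema3.10} applies with a fixed pair $(\alpha,q)$. This is exactly why the level must satisfy $d<\frac{\theta-2}{\theta}\frac{\pi}{\alpha_0}$: a larger level could allow $\limsup_n\|w_n\|_\lambda^2$ to reach $4\pi/\alpha_0$, at which point the uniform bound on $|e^{\alpha w_n^2}-1|_q$ fails and the nonlinear term need no longer pass to the limit. A secondary technical point to watch is the interplay between the two norms $\|\cdot\|$ and $\|\cdot\|_\lambda$ when selecting $\alpha$, $q$ and $m$, but since $\|\cdot\|\leq\|\cdot\|_\lambda$ this causes no real trouble.
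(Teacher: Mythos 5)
Your proposal is correct, and its skeleton coincides with the paper's: a $(PS)_d$ sequence is bounded (Lemma \ref{lema3.7}), the level restriction $d<\frac{\theta-2}{\theta}\frac{\pi}{\alpha_0}$ combined with the estimate (\ref{Z1001}) (applied at the level $d$, which its proof allows) gives $\limsup_n\|w_n\|_\lambda^2\leq\frac{4\theta}{\theta-2}d<\frac{4\pi}{\alpha_0}$, so one may fix $\alpha>\alpha_0$ and $q>1$ with $q m\alpha<4\pi$ and invoke Lemma \ref{lema3.10}, and the nonlinear term is then split over $B(0,R)$ and $B^c(0,R)$ with the tail handled by $(\widetilde{g}_5)$ and Lemma \ref{lema3.8}, exactly as the paper does. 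Where you genuinely diverge is the inner-ball step: the paper proves the two limit identities of Claim \ref{afir3.4}, and on $B(0,R)$ it shows $\phi_n=|w_n|^2+C|w_n|b_\alpha(w_n)\to\phi$ in $L^1(B(0,R))$ by pairing the \emph{weak} convergence $b_\alpha(w_n)\rightharpoonup b_\alpha(w)$ in $L^q(B(0,R))$ (via \cite[Lemma 4.8]{Kavian}) against the strong convergence $w_n\to w$ in $L^{q'}(B(0,R))$, concluding with a generalized dominated convergence argument; you instead estimate $\int_{B(0,R)}\widetilde{g}(x,w_n)(w_n-w)\,dx$ directly by a three-exponent H\"older inequality, placing $b_\alpha(w_n)$ in $L^q$ (same Lemma \ref{lema3.10}), $|w_n|^l$ in $L^{a_1}$ via $H^1(\R^2)\hookrightarrow L^{s}$ for all finite $s$, and the vanishing factor $|w_n-w|$ in $L^{a_3}(B(0,R))$ -- and since $q>1$ leaves room $1-\frac{1}{q}>0$ for $\frac{1}{a_1}+\frac{1}{a_3}$, the exponents can indeed be chosen. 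Your route is more elementary (no weak-convergence lemma, no generalized dominated convergence) and suffices exactly for the $(PS)$ condition; what the paper's longer route buys is the explicit limits $(i)$--$(ii)$ of Claim \ref{afir3.4}, which are recycled later (e.g.\ in Lemma \ref{lema3.11} and in identifying the weak limit as a solution). Likewise, your direct Cauchy--Schwarz treatment of the penalization term (with $(\varphi-w_n)^+$ bounded in $L^2(\Omega)$ and $w_n\to w$ in $L^2(\Omega)$, $\Omega$ bounded) is a legitimate short-cut for the paper's citation of \cite[Claim 3.5]{CALBCT}.
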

\begin{proof}
Let $(w_n)$ be a $(PS)_{d}$ sequence for the functional $I_{\lambda,\epsilon}$ with $d \in (0,\frac{\theta-2}{\theta}\frac{\pi}{\alpha_0})$. By Lemma \ref{lema3.7}, $(w_n)$ is bounded, and so, there is $w\in E_\lambda$ such that, up to a subsequence, 
$$
\begin{aligned}
&w_n\rightharpoonup w\ \text{in} \ E_\lambda,\\
&w_n(x)\rightarrow w \ \text{a.e.} \ \text{in} \ \mathbb{R}^2\;\;\mbox{and}\\
&w_n\rightarrow w \ \text{in} \ L_{Loc}^s(\mathbb{R}^2), \forall s\geq2.
\end{aligned}
$$
Note that 
$$
||w_n-w||_\lambda^2=\langle w_n-w,w_n-w\rangle_\lambda=||w_n||^2_\lambda-\langle w_n,w\rangle_\lambda+o_n(1),
$$
$$
||w_n||^2_\lambda=I_{\lambda,\epsilon}'(w_n)w_n+\frac{1}{\epsilon} \int_\Omega(\varphi-w_n)^+w_n\,dx+\int_{\mathbb{R}^N}g(x,w_n) w_n\,dx
$$
and
$$
-\langle w_n,w\rangle_\lambda=-I_{\lambda,\epsilon}'(w_n)w-\frac{1}{\epsilon} \int_\Omega(\varphi-w_n)^+w\,dx-\int_{\mathbb{R}^N}g(x,w_n) w\,dx.
$$
As in the proof of \cite[Claim 3.5]{CALBCT},  
	$$
	\int_\Omega(\varphi-w_n)^+w_n\,dx-\int_\Omega(\varphi-w_n)^+w\,dx=o_n(1).
	$$
So, 
	$$
	||w_n-w||_\lambda^2=\int_{\mathbb{R}^2}g(x,w_n) w_n\,dx-\int_{\mathbb{R}^2}g(x,w_n) w\,dx+o_n(1).
	$$
\begin{claim}\label{afir3.4} 
The following limits are true:
\begin{itemize}
\item [$(i)$] $ \displaystyle\lim_{n\rightarrow\infty}\displaystyle\int_{\mathbb{R}^2}\widetilde{g}(x,w_n) w_n\,dx=\int_{\mathbb{R}^N}\widetilde{g}(x,w) w\,dx;$	
\item[$(ii)$]$ \displaystyle\lim_{n\rightarrow\infty}\displaystyle\int_{\mathbb{R}^2}\widetilde{g}(x,w_n) v\,dx=\int_{\mathbb{R}^N}\widetilde{g}(x,w) v\,dx, \quad \forall v\in E_\lambda.$
			\end{itemize}
\end{claim}
We start by showing $(i)$. Note that   
$$
\Bigg|\displaystyle\int_{\mathbb{R}^2}\widetilde{g}(x,w_n) w_n\,dx-\int_{\mathbb{R}^2}\widetilde{g}(x,w) w\,dx\Bigg|\leq\displaystyle\int_{\mathbb{R}^2}|g(x,w_n) w_n-g(x,w) w|\,dx.
$$
Given $\delta>0$, consider $R>0$ as in Lemma \ref{lema3.8} and  
$$
I_{n,1}=\int_{B(0, R)}|\widetilde{g}(x,w_n)w_n-\widetilde{g}(x,w)w|\,dx\quad\text{and}\quad I_{n,2}=\int_{B^c(0, R)}|\widetilde{g}(x,w_n)w_n-\widetilde{g}(x,w)w|\,dx.
$$
By $(\widetilde{g}_5)$,  
$$
|\widetilde{g}(x,w_n)|\leq |w_n|^2+C|w_n|b_\alpha(w_n), \quad \forall x\in\mathbb{R}^2 \ \ \text{and} \ \ n\in\mathbb{N}.
$$
Our goal is to show that $\displaystyle \lim_{n \to +\infty}I_{n,1}=0$. In the sequel, let us define the following functions 
	$$
	\phi_n:=|w_n|^2+C|w_n|b_\alpha(w_n)\ \ \ \text{and} \ \ \ \phi:=|w|^2+C|w|b_\alpha(w)
	$$
Since 
	$$
	w_n(x)\rightarrow w(x) \ \text{a.e.} \ \text{in} \ \mathbb{R}^2,
	$$ 
then 
	$$
	\widetilde{g}(x,w_n)\rightarrow\widetilde{g}(x,w) \ \text{a.e.} \ \text{in} \ B(0, R)
	$$
	and
	$$
	\phi_n(x)\rightarrow \phi(x) \ \text{a.e.} \ \text{in} \ B(0, R).
	$$
In order to conclude our result, we need to show that 
	$$
	\phi_n\rightarrow \phi  \ \text{in} \ L^1(B(0, R)).
	$$
As $ \displaystyle\limsup_{n\rightarrow\infty}||w_n||_{\lambda}^2< 4\pi/\alpha_0,$ there are $0<m<4\pi/\alpha_0$ and $n_0 \in \mathbb{N}$ such that 
	$$
	||w_n||_\lambda^2\leq m, \quad \forall n \geq n_0.
	$$
Since $m\alpha_0<4\pi$, then, for $\alpha>\alpha_0$ and $q>1$ close enough to $\alpha_0$ and $1$ respectively, we must have $qm\alpha<4\pi$. By Lemma \ref{lema3.10}, there is $C>0$ such that  $b_\alpha(w_n)=(e^{\alpha w_n^2}-1)$ belongs to $L^q(\mathbb{R}^2)$ and
	$$
	|b_\alpha(w_n)|_q\leq C, \quad \forall n\in\mathbb{N}.
	$$
Hence, the sequence $(b_\alpha(w_n))$ is bounded in $L^q(B(0, R))$, and by \cite[Lemma 4.8]{Kavian},  
	$$
	b_\alpha(w_n)\rightharpoonup b_\alpha(w)  \ \text{in} \ L^q(B(0, R)).
	$$
As
	$$
	w_n\rightarrow u  \ \text{in} \ L^{q'}(B(0, R)),\quad \text{with} \ \ 1/q+1/q'=1,
	$$
it follows that
	$$
	w_nb_\alpha(w_n)\rightarrow wb_\alpha(w)  \ \text{in} \ L^1(B(0, R)).
	$$
Consequently, 
	$$
	\phi_n\rightarrow \phi  \ \text{in} \ L^1(B(0, R)).
	$$
Therefore, by the Lebesgue dominated convergence theorem  
\begin{equation}	\label{eq3.18}
	\displaystyle\lim_{n\rightarrow\infty}I_{n,1}=0.
\end{equation}
On the other hand, since $\widetilde{\Omega}\subset B(0, R)$, 
$$
|\widetilde{g}(x,t)t|\leq \frac{1}{k}|t|^2,\quad \forall x\in\mathbb{R}^N\setminus B(0,R), \forall t\in\mathbb{R}.
$$
Thus, 
$$
\begin{array}{ll}
	|I_{n,2}| \leq\displaystyle\frac{1}{k}\int_{B^c(0, R)}|w_n|^2\,dx+\frac{1}{k}\int_{B^c_R(0)}|w|^2\,dx\\
	\hspace{0.5cm}\leq\displaystyle\frac{1}{k}\int_{B^c(0, R)}(|\nabla w_n|^2+|w_n|^2)\,dx+\frac{1}{k}\int_{B^c(0, R)}|w|^2\,dx.
	\end{array}
$$
Moreover, as $w\in L^2(\mathbb{R}^N)$, then for $R$ large enough we have 
	$$
	\int_{B^c(0, R)}|w|^2\,dx<\displaystyle\frac{\delta}{2k},
	$$
and by Lemma \ref{lema3.8}, 
	$$
	\displaystyle\limsup_{n\rightarrow\infty} \Bigg(\int_{B^c(0, R)}(|\nabla w_n|^2+|w_n|^2)\hspace{0.05cm}dx\Bigg)<\displaystyle\frac{\delta}{2k}.
	$$
Consequently 
	$$
	\displaystyle \limsup_{n\rightarrow\infty} |I_{n,2}| \leq \delta, \ \ \forall\delta>0,
	$$
which implies 
	\begin{equation}
	\displaystyle\lim_{n\rightarrow\infty}I_{n,2}=0.
	\label{eq3.19}
	\end{equation}
By (\ref{eq3.18}) and (\ref{eq3.19}),
	$$
	\displaystyle\lim_{n\rightarrow\infty}\displaystyle\int_{\mathbb{R}^2}\widetilde{g}(x,w_n) w_n\,dx=\int_{\mathbb{R}^N}\widetilde{g}(x,w) w\,dx.
	$$
A similar argument works to show that $(ii)$ holds.
\end{proof}

Now we are going to show that the penalized problem (\ref{penalizado2}) has a nontrivial weak solution. Let  $c_{\lambda,\epsilon}$ the mountain pass level. By Lemma \ref{lema3.9} there is $\nu_*>0$ such that
$$
0<c_{\lambda,\epsilon}<\frac{\theta-2}{\theta}\frac{\pi}{\alpha_0}, \ \ \text{for} \ \ \nu\geq \nu_*,  \ \ \forall \lambda>0 \ \ \text{and} \ \ \forall \epsilon>0.
$$
By Proposition \ref{prop3.3}, the functional $I_{\lambda,\epsilon}$ satisfies the $(PS)_{c_{\lambda,\epsilon}}$ condition. Therefore, by the Mountain Pass Theorem due to Ambrosetti and Rabinowitz \cite{AR}, there exists $u\in E_\lambda$ such that
$$
I_{\lambda,\epsilon}(u)=c_{\lambda,\epsilon}>0 \ \ \text{and} \ \ I'_{\lambda,\epsilon}(u)=0
$$

\subsection{Existence of solution to the modified variational inequality}

By the previous analysis, we find a nontrivial weak solution $u_\epsilon\in E_\lambda$ to the problem (\ref{penalizado2}), that is, $u_\epsilon$ satisfies 
$$
\displaystyle\int_{\mathbb{R}^2}[\nabla u_\epsilon \nabla v+(1+\lambda V(x))u_\epsilon v]\hspace{0.05cm}dx +\displaystyle\frac{1}{\epsilon} \langle P(u_\epsilon),v\rangle=
\int_{\mathbb{R}^2} \widetilde{g}(x,u_\epsilon)v\hspace{0.05cm}dx, \ \  \forall v \in E_\lambda.
$$
Now, we consider the following notations 
$$
\epsilon=1/n, \ \ \ u_n=u_{1/n} \ \ \ I_{\lambda,\epsilon}=I_n, \ \ \ I_n(u_n)=c_n,
$$ 
where $c_n=c_{\lambda,\epsilon}$. Hence, for each $n\in\mathbb{N}$, there is $u_n\in E_\lambda$ such that 
\begin{equation}
\displaystyle\int_{\mathbb{R}^2}[\nabla u_n \nabla v+(1+\lambda V(x))u_nv]\hspace{0.05cm}dx +n \langle P(u_n),v\rangle=
\int_{\mathbb{R}^2}\widetilde{g}(x,u_n)v\hspace{0.05cm}dx, \ \ \forall v \in E_\lambda.
\label{eq3.20.2}
\end{equation}
Again, as before we can suppose that $(u_n)$ satisfies 
$$
\displaystyle\limsup_{n\to\infty}||u_n||^{2}_\lambda<\frac{4\pi}{\alpha_0},
$$
which implies the boundedness of $(u_n)$ in $H^1(\mathbb{R}^2)$. So, there is $u\in H^1(\mathbb{R}^2)$ such that, up to a subsequence we have 
$$
\begin{aligned}
&u_n\rightharpoonup u \ \ \text{in}  \ \ H^1(\mathbb{R}^2),\\
&u_n\rightarrow u \ \text{in} \ L_{Loc}^s(\mathbb{R}^2), \  s\geq2\;\;\mbox{and}\\
&u_n(x)\rightarrow u(x) \ \text{a.e.} \ \text{in} \ \mathbb{R}^2.
\end{aligned}
$$
Reasoning as in \cite[Lemma 3.11]{CALBCT}, we also have $P(u)=0$, that is, $u\in\mathbb{K}$. 

The next lemma is crucial to show that $u$ is a solution of (\ref{modificado2}).

\begin{lemma}\label{lema3.11} 
the following convergence 
	$$
	u_n\rightarrow u \ \ \ \text{in} \ \ \ E_\lambda.
	$$
holds 
\end{lemma}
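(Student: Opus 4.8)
The plan is to mimic the argument of Lemma \ref{conver}, replacing the concentration--compactness tools valid for $N\geq 3$ by the Trudinger--Moser machinery developed above. First I would test the identity (\ref{eq3.20.2}) against $v=u_n$ and against $v=u$, the latter being admissible since $u\in\mathbb{K}\subset E_\lambda$. Writing $\|u_n-u\|_\lambda^2=\langle u_n,u_n-u\rangle_\lambda-\langle u,u_n-u\rangle_\lambda$ and using that $u_n\rightharpoonup u$ in $E_\lambda$ gives $\langle u,u_n-u\rangle_\lambda=o_n(1)$, so that after subtracting the two tested equations one obtains
\begin{equation*}
\|u_n-u\|_\lambda^2=\int_{\R^2}\widetilde g(x,u_n)(u_n-u)\,dx+n\langle P(u_n),u-u_n\rangle+o_n(1).
\end{equation*}

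Next I would dispose of the penalty contribution. Since $u\in\mathbb{K}$ we already know $P(u)=0$, and the monotonicity property $(P_2)$ of Definition \ref{penalty} yields
\begin{equation*}
\langle P(u_n),u-u_n\rangle=\langle P(u_n)-P(u),u-u_n\rangle\leq 0 .
\end{equation*}
Hence $n\langle P(u_n),u-u_n\rangle\leq 0$ for every $n$, and the identity above collapses to the inequality
\begin{equation*}
\|u_n-u\|_\lambda^2\leq\int_{\R^2}\widetilde g(x,u_n)(u_n-u)\,dx+o_n(1).
\end{equation*}
Thus everything reduces to proving that the right-hand side tends to zero, i.e. that $\int_{\R^2}\widetilde g(x,u_n)u_n\,dx\to\int_{\R^2}\widetilde g(x,u)u\,dx$ and $\int_{\R^2}\widetilde g(x,u_n)u\,dx\to\int_{\R^2}\widetilde g(x,u)u\,dx$.

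To establish these two limits I would argue exactly as in Claim \ref{afir3.4}. First I would record the tail estimate: proceeding as in Lemma \ref{lema3.8} (testing with $\eta u_n$, whose support does not meet $\mathrm{supp}\,\varphi^+$, so that the penalty term drops out), given $\delta>0$ there is $R>0$ with $\limsup_{n\to\infty}\int_{B^c(0,R)}(|\nabla u_n|^2+|u_n|^2)\,dx<\delta$. Splitting each integral over $B(0,R)$ and $B^c(0,R)$, on the ball I would combine the a.e.\ convergence $u_n\to u$ with the uniform $L^q$ bound for $b_\alpha(u_n)=e^{\alpha u_n^2}-1$ furnished by Lemma \ref{lema3.10}, the ensuing weak convergence $b_\alpha(u_n)\rightharpoonup b_\alpha(u)$ and the strong convergence $u_n\to u$ in $L^{q'}_{loc}$ to obtain convergence in $L^1(B(0,R))$; outside $\widetilde\Omega$ I would invoke the subquadratic control $|\widetilde g(x,t)t|\leq \tfrac1k|t|^2$ from $(\widetilde g_5)$ together with the tail estimate to render the exterior integral as small as desired. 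Combining both pieces gives $\int_{\R^2}\widetilde g(x,u_n)(u_n-u)\,dx\to 0$, whence $\|u_n-u\|_\lambda^2=o_n(1)$.

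The main obstacle is the critical exponential growth of the nonlinearity: the $L^1$ convergence on bounded sets is available only because the Trudinger--Moser estimate (\ref{T-M}) applies, and this in turn is guaranteed solely by the subcritical energy bound $\limsup_{n\to\infty}\|u_n\|_\lambda^2<4\pi/\alpha_0$ already secured for the sequence $(u_n)$. Once that threshold is in force, Lemma \ref{lema3.10} delivers the uniform $L^q$ bounds that drive the compactness, and no difficulty beyond the bookkeeping already carried out in Proposition \ref{prop3.3} remains.
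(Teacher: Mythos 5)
Your proposal is correct and follows essentially the same route as the paper's own proof: you derive the inequality $\|u_n-u\|_\lambda^2\leq\int_{\mathbb{R}^2}\widetilde g(x,u_n)u_n\,dx-\int_{\mathbb{R}^2}\widetilde g(x,u_n)u\,dx+o_n(1)$ by testing with $u_n$ and $u$ and discarding the penalty term via $P(u)=0$ and the monotonicity property $(P_2)$, exactly as the paper does. The remaining limits are then handled just as in the paper, by repeating the argument of Proposition \ref{prop3.3} using the tail estimate of Lemma \ref{lema3.8} and the uniform $L^q$ bound on $b_\alpha(u_n)$ from Lemma \ref{lema3.10}, the latter available because of the bound $\limsup_{n\to\infty}\|u_n\|_\lambda^2<4\pi/\alpha_0$ already established for this sequence.
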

\begin{proof} First of all, we recall that 
	\begin{equation*}
	||u_n-u||_\lambda^2=\langle u_n-u,u_n-u\rangle_\lambda=||u_n||^2_\lambda-\langle u_n,u\rangle_\lambda+o_n(1),
	\label{2seq1}
	\end{equation*}
	\begin{equation*}	
	||u_n-u||_\lambda^2=n\langle P(u_n),u-u_n\rangle+\int_{\mathbb{R}^2}\widetilde{g}(x,u_n) u_n\,dx-\int_{\mathbb{R}^2}\widetilde{g}(x,u_n) u\,dx+o_n(1),
	\label{2seq2}
	\end{equation*}
$$
||u_n||^2_\lambda =I_{n}'(u_n)u_n-n\langle P(u_n),u_n\rangle+\int_{\mathbb{R}^2}\widetilde{g}(x,u_n) u_n\,dx, 
$$
$$
-\langle u_n,u\rangle_\lambda =-I_{n}'(u_n)u+n\langle P(u_n),u\rangle-\int_{\mathbb{R}^2}\widetilde{g}(x,u_n) u\,dx
$$
and
$$
\langle P(u_n),u-u_n\rangle =\langle P(u_n)-P(u),u-u_n\rangle\leq0.
$$
All of this information leads to
	\begin{equation}	
	||u_n-u||_\lambda^2\leq \int_{\mathbb{R}^2}\widetilde{g}(x,u_n) u_n\,dx-\int_{\mathbb{R}^2}\widetilde{g}(x,u_n) u\,dx+o_n(1).	
	\label{2seq3}
	\end{equation}
Again, we need the following limits 
	\begin{equation} 
	\displaystyle\lim_{n\rightarrow\infty}\displaystyle\int_{\mathbb{R}^2}\widetilde{g}(x,u_n) u_n\,dx=\int_{\mathbb{R}^2}\widetilde{g}(x,u) u\,dx
	\label{seq6}
	\end{equation}
	and
	\begin{equation}
	\displaystyle\lim_{n\rightarrow\infty}\displaystyle\int_{\mathbb{R}^2}\widetilde{g}(x,u_n) v\,dx=\int_{\mathbb{R}^2}\widetilde{g}(x,u) v\,dx, \quad \forall v\in H^1(\mathbb{R}^2)
	\label{seq7}
	\end{equation}
to finish our proof. With a little bit modification in the proof of Proposition \ref{prop3.3}, it is possible to check that the limits mentioned above are true. More precisely, we note that the sequence $(u_n)$ satisfies the same result as in Lemma \ref{lema3.8}, that is, given $\delta>0$ there is $R>0$ such that
	$$
	\displaystyle\limsup_{n\rightarrow\infty} \Bigg(\int_{B^c(0, R)}(|\nabla u_n|^2+|u_n|^2)\hspace{0.05cm}dx\Bigg)<\delta.
	$$
Furthermore, by (\ref{T-M}), it is possible to show that $b_\alpha(u_n)=(e^{\alpha u_n^2}-1)$ belongs to $L^q(\mathbb{R}^2)$ and
	$$
	\displaystyle\sup_{n\in\N}|b_\alpha(u_n)|_q<\infty
	$$
Now, (\ref{seq6}) - (\ref{seq7}) follow by repeating the same computations explored in the proof of Proposition \ref{prop3.3}.
\end{proof}

Recalling the properties of the penalized operator, we know that   
$$
\langle P(u_n), w-u_n\rangle=\langle P(u_n)-P(w), w-u_n\rangle\leq 0, \quad \forall w\in\mathbb{K}.
$$
Putting $v=w-u_n$ in (\ref{eq3.20}), it follows that 
\begin{equation}
\displaystyle\int_{\mathbb{R}^2}\nabla u_n \nabla (w-u_n)\hspace{0.05cm}dx+\displaystyle\int_{\mathbb{R}^2}(1+\lambda V(x))u_n(w-u_n)\hspace{0.05cm}dx\geq
\int_{\mathbb{R}^2}\widetilde{g}(x,u_n)(w-u_n)\hspace{0.05cm}dx.
\label{eq3.21.2}
\end{equation}
Taking the limit $n\rightarrow+\infty$ in (\ref{eq3.21.2}) and applying Lemma \ref{lema3.11},  we get
\begin{equation}
\displaystyle\int_{\mathbb{R}^2}\nabla u \nabla (w-u)\hspace{0.05cm}dx+\displaystyle\int_{\mathbb{R}^2}(1+\lambda V(x))u(w-u)\hspace{0.05cm}dx\geq
\int_{\mathbb{R}^2}\widetilde{g}(x,u)(w-u)\hspace{0.05cm}dx, \quad \forall w\in\mathbb{K}.
\label{eq3.22}
\end{equation}
This shows that $u$ is a non-negative solution of (\ref{modificado2}).

\subsection{Proof of Theorem \ref{main1}}

In this section, we are going to show that the solutions that were found for the problem (\ref{modificado2}) are in fact solutions for the original problem (\ref{01}) when $\lambda$ is large enough. 
 
Let $u_n\in E_{\lambda_n}$ and $\lambda_n\rightarrow+\infty$ satisfying  
\begin{equation}
\displaystyle\int_{\mathbb{R}^2}\nabla u_n \nabla (v-u_n)\hspace{0.05cm}dx+\displaystyle\int_{\mathbb{R}^2}(1+\lambda_n V(x))u_n(v-u_n)\hspace{0.05cm}dx\geq
\int_{\mathbb{R}^2}\widetilde{g}(x,u_n)(v-u_n)\hspace{0.05cm}dx, \quad \forall v\in\mathbb{K}.
\label{eq3.23}
\end{equation}

The following result below is a key point in our arguments. 
\begin{proposition}\label{prop3.4}
Let $(u_n)$ be a solution sequence of (\ref{eq3.23}). Then, there is a subsequence still denoted by $(u_n)$ and $u \in H^1(\mathbb{R}^2)$ such that
	$$
	u_n \rightharpoonup u\ \mbox{in}\  H^1(\mathbb{R}^2).
	$$
Moreover,
	\begin{itemize}
		\item[$(i)$] $u\equiv 0$ in $\mathbb{R}^2 \setminus \Omega$. 
		\item[$(ii)$] $|| u_n-u||^{2}_{\lambda_{n}} \rightarrow 0.$ 
		\item[$(iii)$] As $\lambda_n\rightarrow \infty$, the following limits
		\begin{align*}
		& u_n \to u \quad \mbox{in} \quad H^{1}(\mathbb{R}^2), &\\
		&\lambda_n \int_{\mathbb{R}^2} V(x)\left|u_n\right|^2dx \rightarrow 0,&\\
		&||u_n||^2_{\lambda_n } \rightarrow \int_{\Omega}(|\nabla u|^2+|u|^2)\,dx=||u||^{2}_{H^{1}(\Omega)}.&
		\end{align*}
		hold.
		\item[$(iv)$]  $u$ be a solution of the variational inequality 
		\begin{equation}
		\displaystyle\int_{\Omega}\nabla u \nabla (v-u)\hspace{0.05cm}dx+\displaystyle\int_{\Omega}u(v-u)\hspace{0.05cm}dx\geq
		\int_{\Omega}f(u)(v-u)\hspace{0.05cm}dx,
		\label{obstaculolimite2}
		\end{equation}
		for all $v \in \tilde{\mathbb{K}}$, where
		$$
		\tilde{\mathbb{K}} = \lbrace v \in H^1_0(\Omega); \ v \geq \varphi \ \text{a.e. in}\ \Omega \rbrace
		$$
		\end{itemize}
\end{proposition}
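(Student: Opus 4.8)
The plan is to mirror the proof of Proposition \ref{propo} from the case $N\geq 3$, replacing the concentration--compactness analysis based on Lemma \ref{C-C} with the Trudinger--Moser machinery developed in Lemmas \ref{lema3.10}, \ref{lema3.8} and \ref{lema3.9}. First I would establish boundedness and weak convergence. Arguing exactly as for (\ref{Z1001}), one obtains $\limsup_{n\to\infty}\|u_n\|_{\lambda_n}^2\leq\frac{4\theta}{\theta-2}M_2$ with $M_2=\sup_n c_{\lambda_n}$, so $(u_n)$ is bounded in $E_{\lambda_n}$, and hence in $H^1(\mathbb{R}^2)$ since $\|u_n\|\leq\|u_n\|_{\lambda_n}$. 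Passing to a subsequence, $u_n\rightharpoonup u$ in $H^1(\mathbb{R}^2)$. Item $(i)$ follows as in \cite[Proposition 3.12]{CALB}: because $\lambda_n\int_{\mathbb{R}^2}V(x)|u_n|^2dx$ remains bounded while $\lambda_n\to\infty$, a Fatou-type argument forces $u=0$ a.e. on $V^{-1}((0,\infty))$, hence $u\equiv 0$ in $\mathbb{R}^2\setminus\overline{\Omega}$.

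The heart of the argument is item $(ii)$. I would write
\begin{equation*}
\|u_n-u\|_{\lambda_n}^2=\langle u_n,u_n-u\rangle_{\lambda_n}-\langle u,u_n-u\rangle_{\lambda_n},
\end{equation*}
and, using $(i)$ together with $u_n\rightharpoonup u$, verify that $\langle u,u_n-u\rangle_{\lambda_n}=o_n(1)$ (the potential contribution drops since $u$ is supported where $V=0$). As $u\geq\varphi$ a.e. in $\Omega$, we have $u\in\mathbb{K}$, so $v=u$ is admissible in (\ref{eq3.23}), yielding $\langle u_n,u_n-u\rangle_{\lambda_n}\leq\int_{\mathbb{R}^2}\widetilde{g}(x,u_n)(u_n-u)\,dx$. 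Combining the two gives
\begin{equation*}
\|u_n-u\|_{\lambda_n}^2\leq\int_{\mathbb{R}^2}\widetilde{g}(x,u_n)(u_n-u)\,dx+o_n(1),
\end{equation*}
so it suffices to prove the limits (\ref{seq6})--(\ref{seq7}). These are obtained exactly as in Claim \ref{afir3.4}: the tail estimate of Lemma \ref{lema3.8} controls the integral over $B^c(0,R)$ via $(\widetilde{g}_5)$, while on $B(0,R)$ I would invoke Lemma \ref{lema3.10} to get $b_\alpha(u_n)$ bounded in $L^q$, then combine the weak convergence $b_\alpha(u_n)\rightharpoonup b_\alpha(u)$ in $L^q(B(0,R))$ (through \cite[Lemma 4.8]{Kavian}) with the strong convergence $u_n\to u$ in $L^{q'}(B(0,R))$ to pass to the limit in the product $u_n b_\alpha(u_n)$.

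The main obstacle is guaranteeing that the Trudinger--Moser inequality (\ref{T-M}) applies with a genuinely subcritical exponent, i.e. that $\limsup_{n\to\infty}\|u_n\|_{\lambda_n}^2<4\pi/\alpha_0$, so that for $\alpha>\alpha_0$ and $q>1$ chosen close to $\alpha_0$ and $1$ one secures $qm\alpha<4\pi$ in Lemma \ref{lema3.10}. This is precisely where Lemma \ref{lema3.9} enters: the level bound $c_{\lambda,\epsilon}<A/\nu^{2/(p-1)}$ together with the estimate $\limsup_n\|u_n\|_{\lambda_n}^2\leq\frac{4\theta}{\theta-2}M_2$ pushes the norms below $4\pi/\alpha_0$ once $\nu$ is large enough, which is exactly the compactness threshold. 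Granted (\ref{seq6})--(\ref{seq7}), the right-hand side of the last inequality tends to zero and $(ii)$ follows.

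Finally, items $(iii)$ and $(iv)$ are routine consequences. From $\|u_n-u\|^2\leq\|u_n-u\|_{\lambda_n}^2\to0$ I obtain strong convergence in $H^1(\mathbb{R}^2)$, and using $(i)$,
\begin{equation*}
\lambda_n\int_{\mathbb{R}^2}V(x)|u_n|^2dx=\lambda_n\int_{\mathbb{R}^2}V(x)|u_n-u|^2dx\leq C\|u_n-u\|_{\lambda_n}^2\to0,
\end{equation*}
from which the remaining limits in $(iii)$ follow. For $(iv)$, any $v\in\widetilde{\mathbb{K}}$ is admissible in (\ref{eq3.23}); passing to the limit with $(i)$--$(iii)$ and the convergence (\ref{seq6})--(\ref{seq7}), and noting that $\widetilde{g}(x,u)=f(u)$ on $\Omega\subset\widetilde{\Omega}$, yields the limiting variational inequality (\ref{obstaculolimite2}).
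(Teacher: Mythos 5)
Your proposal is correct and follows essentially the same route as the paper, whose own ``proof'' of Proposition \ref{prop3.4} is only a paragraph of references: item $(i)$ via \cite[Proposition 3.12]{CALB}, item $(ii)$ by transplanting Lemmas \ref{lema3.8} and \ref{lema3.10} to the sequence $(u_n)$ (with the subcritical Trudinger--Moser threshold $\limsup_{n}\|u_n\|_{\lambda_n}^2<4\pi/\alpha_0$ secured through Lemma \ref{lema3.9} for $\nu$ large), and items $(iii)$--$(iv)$ as in Proposition \ref{propo}. You in fact supply more detail than the paper does, e.g.\ the admissibility of the test function $v=u$ in (\ref{eq3.23}) and the vanishing of the potential contribution in $\langle u,u_n-u\rangle_{\lambda_n}$ via item $(i)$.
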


The main difficulty in the proof of Proposition \ref{prop3.4}, appears in item $(ii)$, because we need that the sequence $(u_n)$ satisfies the same results as in Lemmas \ref{lema3.10} and \ref{lema3.8}. However, analyzing all the results done at this moment, we can guarantee that these results are also true, and so, $(ii)$ holds for $N=2$. To show $(i)$ we follow the same steps given in the proof of \cite[Proposition 3.12]{CALB} and the items $(iii)$ and $(iv)$ are an immediate consequence of item $(i)$ as we can see in the proof of Proposition \ref{propo}. By these reasons, we have omitted their proofs.

Now, we are going to show that there is $\lambda^*>0$ such that, if $\lambda\geq\lambda^*$, the solution $u_\lambda$ of (\ref{modificado2}) satisfies 
\begin{equation} u_\lambda(x) \leq a, \ \forall x \in \mathbb{R}^N\setminus \widetilde{\Omega}.
\label{eq3.24}
\end{equation}
This estimate permits to conclude that $u_\lambda$ is a solution of the variational inequality (\ref{01}). To do this, we will again use the Moser Iteration method \cite{JM} based in the works of Gongbao \cite{LG}, Alves and Souto \cite{AS}, and Alves and Pereira \cite{AP}. Since the case $N=2$ is more subtle than the case $N\geq 3$, we will write a little bit how we can obtain the estimate (\ref{eq3.24}).   

As in the proof of Theorem \ref{main1} for the case $N\geq3$, for each $n\in\N$ and $L\geq1$, let us set
$$
u_{n,L}(x)=\left\{\
\begin{array}{ll}
u_n, \ \text{if}\ \ u_n\leq L\vspace{0.5cm}\\
L, \ \text{if}\ \ u_n\geq L
\end{array}
\right.
$$
Let $d=\text{dist}(\overline{\Omega},\partial\widetilde{\Omega})$ and consider $0<R<d$. Given $x_0\in {\widetilde{\Omega}}^c$, we fixed a sequence $(r_j)_{j\in\N}\subset\R$ such that $R<r_j<d$, $\forall\in\N$ and  $r_j\downarrow R$. Also, consider 
$$
v_{n,L}(x)=v_{n,L,j}(x)=\Big(u_n-u_n\eta^2\Big(\displaystyle\frac{u_{n,L}}{L}\Big)^{2(\beta-1)}\Big)(x),
$$
and 
$$ 
U_{n,L}(x)=U_{n,L,j}(x)=u_n\eta u_{n,L}^{\beta-1}(x)
$$
where, $\eta=\eta_j\in C^{\infty}(\R)$ is such that $0\leq\eta_j\leq1$, $|\nabla\eta_j|\leq\displaystyle\frac{1}{r_j}$ and
$$
\eta_j(x)=\left\{
\begin{array}{ll}
1, \ \text{if}\ \ x\in B_{r_{j+1}}(x_0)\vspace{0.5cm}\\
0, \ \text{if}\ \ x\in B^c_{r_{j}}(x_0)
\end{array}
\right.
$$
and the number $\beta>1$ will be fixed.

Without any difficulty, as in the proof of Claim \ref{claim5} we can show that $v_{n,L}\in\mathbb{K}$ for each $n\in\mathbb{K}$. Considering $v_{n,L}$ as a test function in (\ref{eq3.23}),  
\begin{equation}
\displaystyle\int_{\R^2}\nabla u_n\nabla(u_n\eta^2u_{n,L}^{2(\beta-1)})\leq-\displaystyle\int_{\R^2}(1+\lambda_nV(x))u_n^2\eta^2u_{n,L}^{2(\beta-1)}\vspace{0.3cm}+\displaystyle\int_{\R^2} \widetilde{g}(x,u_n)u_n\eta^2u_{n,L}^{2(\beta-1)}.\hspace{0.7cm}
\end{equation}
Since $\lambda_nV(x)\geq0$ for all $x\in\mathbb{R}$, 
\begin{equation}
\displaystyle\int_{\R^2}\nabla u_n\nabla(u_n\eta^2u_{n,L}^{2(\beta-1)})\leq -\displaystyle\int_{\R^2}u_n^2\eta^2u_{n,L}^{2(\beta-1)}+\displaystyle\int_{\R^2} \widetilde{g}(x,u_n)u_n\eta^2u_{n,L}^{2(\beta-1)}.\hspace{0.7cm}
\label{eq3.25}
\end{equation}
On the other hand, as  
\begin{equation}\label{eq3.26}
\begin{array}{ccc}
\displaystyle\int_{\R^2}\nabla u_n\nabla(u_n\eta^2u_{n,L}^{2(\beta-1)})&=&\displaystyle\int_{\R^2}|\nabla u_n|^2\eta^2u_{n,L}^{2(\beta-1)}+2\displaystyle\int_{\R^2}\nabla u_n\nabla\eta u_n\eta u_{n,L}^{2(\beta-1)}\vspace{0.2cm}\vspace{0.3cm}\\
&&+2(\beta-1)\displaystyle\int_{\R^2}\nabla u_n\nabla u_{n,L} u_n\eta^2u_{n,L}^{2(\beta-1)-1}
\end{array}
\end{equation}
and 
\begin{equation}\label{eq3.27}
2(\beta-1)\displaystyle\int_{\R^N}\nabla u\nabla u_{n,L} u_n\eta^2u_{n,L}^{2(\beta-1)-1}=2(\beta-1)\displaystyle\int_{u_n\leq L}|\nabla u_n|^2\eta^2u_{n,L}^{2(\beta-1)}\geq0,
\end{equation}
(\ref{eq3.25}) combined with (\ref{eq3.26}) and (\ref{eq3.27}) gives  
\begin{equation}\label{eq3.28}
\displaystyle\int_{\R^2}|\nabla u_n|^2\eta^2u_{n,L}^{2(\beta-1)}+2\displaystyle\int_{\R^2}\nabla u_n\nabla\eta u_n\eta u_{n,L}^{2(\beta-1)}\leq-\displaystyle\int_{\R^2}u_n^2\eta^2u_{n,L}^{2(\beta-1)}+\displaystyle\int_{\R^2} \widetilde{g}(x,u_n)u_n\eta^2u_{n,L}^{2(\beta-1)}.\hspace{0.7cm}
\end{equation}
By $(\widetilde{g}_3)$, fixed $\alpha\geq\alpha_0$ there is $C>0$ such that
$$
|\widetilde{g}(x,t)|\leq |t|+C|t|b_\alpha(t), \quad \forall (x,t)\in\mathbb{R}^2\times\mathbb{R}.
$$
Hence, (\ref{eq3.28}) can be rewritten as follows
\begin{equation}
\displaystyle\int_{\R^2}|\nabla u_n|^2\eta^2u_{n,L}^{2(\beta-1)}+2\displaystyle\int_{\R^2}\nabla u_n\nabla\eta u_n\eta u_{n,L}^{2(\beta-1)}
\leq C\displaystyle\int_{\R^2}b_\alpha(u_n) u_n^{2}\eta^2u_{n,L}^{2(\beta-1)}.
\end{equation}
By the definition of the function $\eta$, 
\begin{equation}\label{eq3.29}
\displaystyle\int_{B(x_0, {r_{j}})}|\nabla u_n|^2\eta^2u_{n,L}^{2(\beta-1)}\,dx+2\displaystyle\int_{B(x_0, {r_{j}})}\nabla u_n\nabla\eta u_n\eta u_{n,L}^{2(\beta-1)}
\leq C\displaystyle\int_{B(x_0, {r_{j}})}b_\alpha(u_n)\eta^2u_{n,L}^{2(\beta-1)}.
\end{equation}
Furthermore, by Sobolev embedding 
\begin{equation}\label{imersaogama}
|U_{n,L}|^2_{L^\gamma(B(x_0, {r_{j}}))}\leq C_\gamma\displaystyle\int_{B(x_0, {r_{j}})}\Big(|\nabla U_{n,L}|^2+|U_{n,L}|^2\Big)\,dx
\end{equation}
for any $\gamma\geq2$. Note that 
\begin{equation}
\begin{aligned}
\displaystyle\int_{B(x_0, {r_{j}})}|\nabla U_{n,L}|^2&\leq \displaystyle\int_{B(x_0, {r_{j}})}|\nabla \eta|^2u_n^2u_{n,L}^{2(\beta-1)}+
3\beta^2\displaystyle\int_{B(x_0, {r_{j}})}|\nabla u_n|^2\eta^2u_{n,L}^{2(\beta-1)}\\
&+6\beta^2\displaystyle\int_{B(x_0, {r_{j}})}\nabla u_n\nabla\eta u_n\eta u_{n,L}^{2(\beta-1)}.\hspace{2.5cm}\\
\end{aligned}
\label{eq3.30-}
\end{equation}
Replacing (\ref{eq3.29}) in (\ref{eq3.30-}),  
$$
\displaystyle\int_{B(x_0, {r_{j}})}|\nabla U_{n,L}|^2\,dx\leq\displaystyle\int_{B(x_0, {r_{j}})}|\nabla \eta|^2u_n^2u_{n,L}^{2(\beta-1)}\,dx
+3\beta^2\displaystyle\int_{B(x_0, {r_{j}})}b_\alpha(u_n)\eta^2u_n^2u_{n,L}^{2(\beta-1)}\,dx.  
$$
By using the properties of function $\eta$ it follows 
$$
\displaystyle\int_{B(x_0, {r_{j}})}|\nabla U_{n,L}|^2\,dx\leq C\displaystyle\int_{B(x_0, {r_{j}})}u_n^2u_{n,L}^{2(\beta-1)}\,dx
+3\beta^2\displaystyle\int_{B(x_0, {r_{j}})}b_\alpha(u_n)\eta^2u_n^2u_{n,L}^{2(\beta-1)}\,dx.  
$$
that is,
\begin{equation}
\displaystyle\int_{B(x_0, {r_{j}})}|\nabla U_{n,L}|^2\,dx\leq C\displaystyle\int_{B(x_0, {r_{j}})}u_n^2u_{n,L}^{2(\beta-1)}\,dx
+3\beta^2\displaystyle\int_{B(x_0, {r_{j}})}b_\alpha(u_n)\eta^2u_n^2u_{n,L}^{2(\beta-1)}\,dx.
\label{eq3.31}
\end{equation}
Replacing (\ref{eq3.31}) in (\ref{imersaogama}) and doing some computations we obtain 
\begin{equation}
|U_{n,L}|^2_{L^\gamma(B(x_0, {r_{j}}))}\leq C\beta^2\Bigg[\displaystyle\int_{B(x_0, {r_{j}})}u_n^2u_{n,L}^{2(\beta-1)}\,dx
+\displaystyle\int_{B(x_0, {r_{j}})}b_\alpha(u_n)\eta^2u_n^2u_{n,L}^{2(\beta-1)}\,dx\Bigg].
\label{eq3.32}
\end{equation}
By Lemma \ref{lema3.10} we know that $b_\alpha(u_n)\in L^q(\mathbb{R}^2)$ for some $q>1$ and $q\approx1$. Moreover, there is $C>0$ such that 
$$
|b_\alpha(u_n)|_q^q\leq C, \ \ \ \forall n\in\mathbb{N}.
$$
This together with H\"older inequality ensures that 
\begin{equation}
\begin{array}{ccc}
|U_{n,L}|_{L^\gamma(B(x_0, {r_j}))}&\leq&C\beta^2\Big[\displaystyle\int_{B(x_0, {r_j})}(u_n^2u_{n,L}^{2(\beta-1)})^{q'}\,dx\Big]^{1/q'}|B(x_0, {r_j})|^{1/q}
\hspace{2.5cm}\\
&+&C\beta^2\Big[\displaystyle\int_{B(x_0, {r_j})}(b_\alpha(u_n))^{q}\,dx\Big]^{1/q}\Big[\displaystyle\int_{B(x_0, {r_j})}(u_n^2u_{n,L}^{2(\beta-1)})^{q'}\,dx\Big]^{1/q'}.
\end{array}
\end{equation}
where $\frac{1}{q}+\frac{1}{q'}=1$. Since $r_{j+1}<r_{j+1}<\cdots<r_{1}$ for all $\forall j\in\mathbb{N}$, 
\begin{equation}
|U_{n,L}|_{L^\gamma(B(x_0, {r_j}))}\leq C\beta^2\Big[\displaystyle\int_{B(x_0, {r_j})}u_n^{2q'}u_{n,L}^{2q'(\beta-1)}\,dx\Big]^{1/q'}
\label{eq.33}
\end{equation}
Consequently,
\begin{equation}
\Big(\displaystyle\int_{B(x_0, {r_{j+1}})}u_{n,L}^{\gamma\beta}\,dx\Big)^{2/\gamma}\leq C\beta^2\Big[\displaystyle\int_{B(x_0, {r_{j}})}u_n^{2q'}u_{n,L}^{2q'(\beta-1)}\,dx\Big]^{1/q'},
\end{equation}
that is,
\begin{equation}
\Big(\displaystyle\int_{B(x_0, {r_{j+1}})}u_{n,L}^{\gamma\beta}\,dx\Big)^{2/\gamma}\leq C\beta^2\Big(\displaystyle\int_{B(x_0, {r_{j}})}u_n^{2q'\beta}\,dx\Big)^{1/q'},
\end{equation}
leading to
\begin{equation}
|u_{n,L}|^{2\beta}_{L^{\gamma\beta}(B(x_0, {r_{j+1}}))}\leq C\beta^2|u_{n}|^{2\beta}_{L^{2q'\beta}(B(x_0, {r_{j}}))}.
\label{eq3.34}
\end{equation}
Applying the Fatou's lemma with respect to the variable $L$ in (\ref{eq3.34}),  we find
\begin{equation}
|u_{n}|_{L^{\gamma\beta}(B(x_0, {r_{j+1}}))}\leq C^{\frac{1}{\beta}}\beta^{\frac{1}{\beta}}|u_{n}|_{L^{2q'\beta}(B(x_0, {r_{j}}))}
\label{eq3.35}
\end{equation}
for all $j,n\in\mathbb{N}$ and $C = C(\gamma)$. After more some calculus, we get
\begin{equation}
|u_{n}|_{L^{\infty}(B_{R}(x_0))}\leq C|u_{n}|_{L^{\gamma}(B_{r_{1}}(x_0))}\leq C|u_{n}|_{L^{\gamma}(\Omega^c)} ,\quad \forall n\in\mathbb{N}.
\label{eq3.45}
\end{equation}
By Proposition \ref{prop3.4}, 
\begin{center}
	$u_n=u_{n,\lambda_n}\rightarrow0$ in $H^1(\Omega^c)$ as $\lambda_n\rightarrow+\infty$
\end{center}
and by Sobolev embedding, given $\kappa>0$ there is $n_0\in\N$ such that
$$
|u_{n}|_{L^{\infty}(B_{R}(x_0))}<\kappa, \ \ \text{for} \ n\geq n_0.
$$
As $x_0\in\Omega'^c$ is arbitrary, next
$$
|u_{n}|_{L^{\infty}(\Omega'^c)}<\kappa.
$$
So, we guarantee the existence of $\lambda^*>0$ such that, the solution $u_\lambda$ satisfies 
\begin{center}
	$u_\lambda(x)\leq a$, $\forall x\in\Omega'^c$ and $\lambda\geq\lambda^*$.
\end{center}
Therefore, for $\lambda\geq\lambda^*$ the function $u$ satisfies  
\begin{equation}
\displaystyle\int_{\mathbb{R}^2}\nabla u \nabla(v-u)\hspace{0.05cm}dx+\displaystyle\int_{\mathbb{R}^2}(1+\lambda V(x))u(v-u)\hspace{0.05cm}dx 
\geq \int_{\mathbb{R}^2}f(u)(v-u)\hspace{0.05cm}dx, \quad \forall v \in \mathbb{K}, 
\label{eq3.46}
\end{equation}	
finishing the proof. 

\end{document}